 \newenvironment{sqcases}{%
  \matrix@check\sqcases\env@sqcases
}{%
  \endarray\right.%
}
\def\env@sqcases{%
  \let\@ifnextchar\new@ifnextchar
  \left\lbrack
  \def\arraystretch{1.2}%
  \array{@{}l@{\quad}l@{}}%
}
\tikzset{every picture/.style={line width=0.75pt}}
\newtheorem{theorem}{Theorem}[section]
\newtheorem{proposition}[theorem]{Proposition}
\newtheorem{definition}[theorem]{Definition}
\newtheorem{lemma}[theorem]{Lemma}
\theoremstyle{definition}
\newtheorem{observation}[theorem]{Observation}
\newtheorem{rmk}[theorem]{Remark}
\numberwithin{equation}{section}
\numberwithin{figure}{section}
\title{Collision trajectories and regularisation of two-body problem on $S^2$}
\author{Alessandro Arsie \ \& \ Nataliya A. Balabanova }
\date{August 2022}
\def\bq{\mathbf{q}}
\def\bp{\mathbf{p}}
\numberwithin{equation}{section}
\numberwithin{figure}{section}
\begin{document}

  \maketitle 
  \begin{abstract}
     In this paper, we investigate collision orbits of two identical bodies placed on the surface of a two-dimensional sphere and interacting via an attracting potential of the form $V(q)=-\cot(q)$, where $q$ is the angle formed by the position vectors of the two bodies. We describe the $\omega$-limit set of the variables in the symplectically reduced system corresponding to initial data that lead to collisions. Lastly, we regularise the system and investigate its behaviour on near collision orbits. This involves the study of completely degenerate equilibria and the use of high-dimensional non-homogenous blow-ups. 
\medskip

\noindent
\textit{Keywords}: Hamiltonian systems, Dynamical systems, Collisions, Regularisation, Weighted blow-up, Completely Degenerate Equilibria.

\end{abstract}
 \tableofcontents 
\section{Introduction}
In the standard formulation of the $N$-body problem on any surface $M$ the phase space is taken to be $T^{\ast}(M^N\backslash \Delta)$, where $\Delta$ is the set of all $N$-tuples of points with two or more coinciding entries.  Putting it in a more straightforward way, the standard formulation of the $N$-body problem excludes any collision.

This non-compact phase space arrangement immediately begs the question of how the bodies behave when on collision orbits or in their immediate vicinity. A possible approach to these questions include finding the $\omega-$limit set of the variables of the system as it tends to a collision and the regularisation of the Hamiltonian vector field. 

Naturally, the first works of this kind discuss the Kepler problem in the plane: see \cite{frauenfelder2018restricted} for the general overview and e.g. \cite{milnor1983geometry, levi1920regularisation} for details.   

The Kepler problem on the plane is equivalent to the two-body problem. However, when the curvature of the surface on which the bodies are moving is nonzero, things are rather different. The Kepler problem becomes a separate (integrable) case, where one body, usually of mass 1 after normalisation is fixed, and the other is moving freely. In contrast, the two-body problem is not integrable and involves two freely-moving bodies of arbitrary masses.  Collision orbits, Hill's regions and near collision dynamics in the former were described in \cite{andrade2017dynamics}. Additional noteworthy result is the regularisation of the vector field near a collision point.

In \cite{borisov2005two}, Borisov and Mamaev investigate the collisions in a specific case of the two-body problem on a sphere, when the two bodies are restricted to move along a great circle. They demonstrate that the collision time is finite, and that  suitable regularisation yields elastic collisions. 

A significant percentage of the works dedicated to N-body problems on surfaces of constant curvature can be roughly divided into two lines of investigation: the more laborious approach of using the stereographic projection (e.g. 
\cite{diacu2017classical, diacu2008n}) and that of Marsden-Weinstein  reduction (\cite{arathoon2019singular, shchepetilov1998reduction, borisov2005two} and  many more). 

In this paper, we follow the latter approach: attempting to perform stereographic projection and describe collision orbits in eight dimensions proves to be a nearly insurmountable computational task. 

The most important results on which we build for our work are contained in \cite{borisov2018reduction}: namely, the method of reduction of our eight-dimensional system to a five-dimensional one through the $SO(3)$-symmetry of the setup.

The variables of the reduced system are $q$, denoting the angle between the coordinate vectors of the two bodies, $p$, the symplectic conjugate of $\dot{q}$ and $m_1,\ m_2,\  m_3$ which have the physical meaning of the angular momentum components in the body frame. 

In order to simplify our computations, we make the standard choice of an  attracting potential $V(q) = -\mu_1\mu_2\cot(q)$ describing the interaction of the particles; additionally, we demand that the two particles be identical: via scaling their masses $\mu_1$ and $\mu_2$ can both be assumed to be 1. This allows to write the system in a more palatable way, and introducing $\xi =\cot(q)$ makes it into a polynomial one, guaranteeing the smoothness of the solution curves.

\subsubsection*{Collision trajectories}
The reduced system has singularities of two types: \textit{antipodal} and \textit{collisions}, corresponding to the cases $q=\pi$ and $q=0$, respectively. We demonstrate that, for two equal masses, no trajectories leading to antipodal collisions exist (the same proof can be immediately adapted to the case of two points with arbitrary positive masses $\mu_1$ $\mu_2$, see Remark \ref{rmk3.3}). Thus, we are left with the task of investigating collisions and near collision orbits. 

 To accomplish this, we use a variety of asymptotic estimates. In a neighbourhood of a point $t^{\ast}\in\mathbb{R}\cup\{\pm\infty\}$we say that  $f \prec g$ if $f/g\to 0$ when $t\to t^{\ast}$, $f \succ g$ if $f/g\to+\infty$ and $ f\sim g $ if $f= h g$ for some bounded function $h$ that does not tend to 0.  Additional relations  $\preccurlyeq$ and $\succcurlyeq$ are the logical unions of respectively $\prec$ and $\succ$ with $\sim$. We will comment on these asymptotic estimates later on.

From the very definition of collision trajectories, we have that $q\to 0^{+}$ along a collision trajectory, thus making  $\xi = \cot(q)\to+\infty$. Henceforth, $\xi$ will act the 'measuring device' for the growth of the rest of the dynamical variables.

Leveraging on the form of the common level set of the Hamiltonian function $\mathcal{H}$ and the Casimir function $\mathcal{C}$, we  conclude that $m_3\to 0$ and the growth of $|p|$ does not exceed that of $\sqrt{\xi}$.

Introducing the quantity $I = \left<\bq_1-\bq_2, \bq_1-\bq_2\right>$, we demonstrate that the collision time is finite, as well as  that $\dot{q}$, together with   $p$, tend to $-\infty$ on collision trajectories. 

Further assessment on the growth of $p$ yields that $p\sim-\sqrt{\xi}$, which, in turn, allows us to estimate $m_3\preccurlyeq\frac{1}{\xi}$. Using this, we show that $m_2-2m_3\xi\preccurlyeq\frac{1}{\sqrt{\xi}}$, leading to a  computation proving that the projection of any collision trajectory on the sphere $m_1^2 + m_2^2 + m_3^2 = C$ has finite length (this sphere is precisely the level set of the Casimir function).

Our main result concerning the $\omega$-limit set for initial data that lead to collisions is Theorem \ref{st: theorem omega limit set}, stating that the $\omega$-limit set of collision trajectories is a singleton of the form $\{m_1=m_1^{\ast}, m_2 = m_2^{\ast}, m_3 =m_3^{\ast}, q=0, p=-\infty\}$.

Lastly, we deduce that the global behaviour of the two bodies is as follows: as they approach each other,  as a pair they move towards the plane orthogonal to the vector of angular momentum. Additionally, bodies cannot rotate  around each other infinitely many times. 

\subsubsection*{Regularisation}

The form of the reduced  equations (\ref{eq: two equal masses with xi}) suggests that the two-dimensional plane  $m_2=m_3=0$, $m_1 =\pm\sqrt{C}$ is invariant under the dynamics. The corresponding motion is the one where both bodies move  along the great circle that lies in the plane orthogonal to the vector of the angular momentum. We refer to this setup as the \textit{motion in the invariant plane}. Unless the bodies are in a state of a relative equilibrium on the opposite sides of the sphere (an antipodal singularity that we don't consider), a collision is inevitable. Additionally, this case illustrates that, in contrast with the Kepler problem, collisions can happen at any values of the momentum. 

This system reduces to a two-dimensional one, and through a change of variables and introduction of a fictitious time we regularise the equations. The only equilibrium of the new system is the origin, and we use non-homogeneous blow-up to investigate the trajectories nearby. 

For the general case, we observe that after an appropriate change of variables, the  equilibria of the system form a two-dimensional plane; this entails that we only need to perform a three-dimensional nonhomogeneous blow-up, as opposed to a five-dimensional one, which simplifies the problem greatly. 

Thus two of the eigenvalues of the linearised matrix at newly obtained equilibria will be identically zero, but this is inconsequential, since the dynamics in the corresponding directions is trivial (a plane of equilibria). However,  since in the blow-up we perform the exceptional divisor is a two dimensional sphere, we need to utilise two charts to cover it completely. This leads us to use two different non-homogenous blow-ups in order to provide two charts. 

We determine the number and types of equilibria on both charts and demonstrate that equilibria of the same type, when twice covered, correspond to the same directions in the original variables.  Lastly, the dynamics of all variables near the exceptional divisor are described. 

{\bf Acknowledgments} The authors would like to thank the anonymous referee for fruitful suggestions and having helped to improve the readability of the paper.

\section{Setup}
The equations of motion of two free bodies on the sphere with masses $\mu_1$ and $\mu_2$ that act on one another with an attracting force have the form (\cite{diacu2012n}, \cite{diacu2012n2})
\begin{equation}
    \label{eq:general one}
    \begin{cases}
    \dot{ \mathbf{q}}_1=\frac{\mathbf{p}_1}{\mu_1};\\
    \dot{ \mathbf{q}}_2=\frac{\mathbf{p}_2}{\mu_2};\\
    \dot{ \mathbf{p}}_1=V\left(||\mathbf{q}_1-\mathbf{q}_2||\right)_{\mathbf{q}_1} - \frac{\left<\bp_1,\bp_1\right>}{\mu_1}\bq_1;\\
    \dot{ \mathbf{p}}_2=V\left(||\mathbf{q}_1-\mathbf{q}_2||\right)_{\mathbf{q}_2} - \frac{\left<\bp_2,\bp_2\right>}{\mu_2}\bq_2;\\
    ||\mathbf{q}_1|| = ||\mathbf{q}_2|| = 1;\\
    \mathbf{p}_1\cdot\mathbf{q}_1 =\mathbf{p}_2\cdot\mathbf{q}_2 = 0. 
    \end{cases}
\end{equation}
with the energy function (Hamiltonian) $\mathcal{H}$ given by
\begin{equation}
    \label{eq:Hamiltonian gen form}
    \mathcal{H} = \frac{||\bp_1||^2}{2\mu_1} + \frac{||\bp_2||^2}{2\mu_2} + V
    (||\bq_1-\bq_2||). 
\end{equation}
The problem has obvious $SO(3)$-symmetry, and, consequently, at least three conserved quantities: the components of the total angular momentum $\mathbf{L}$, of explicit form
\[
\mathbf{L} = \bq_1\times\bp_1 =\bq_2\times\bp_2.
\]
These quantities are the components of the equivariant momentum map. 

We assume that the potential $V\left(||\bq_1-\bq_2||\right)$ is an attracting one, meaning that $V\left(||\bq_1-\bq_2||\right)\to -\infty$ when $\bq_1\to \bq_2$ and  $V\left(||\bq_1-\bq_2||\right)\to +\infty$ when $\bq_1\to- \bq_2$. 

Such a choice of the potential immediately entails that the phase space of this problem is $\mathcal{Q}:=T^{\ast}(S^2\times S^2\backslash\Delta)$ (taken with the standard cotangent bundle symplectic structure), where $\Delta$, referred to in the literature as 'the big diagonal' is the set consisting of pairs of the same or antipodal points on the sphere.

\begin{figure}
\centering

\tikzset{every picture/.style={line width=0.75pt}} 

\begin{tikzpicture}[x=0.75pt,y=0.75pt,yscale=-1,xscale=1]

\draw    (282.11,159.87) -- (281.12,10.6) ;
\draw [shift={(281.11,8.6)}, rotate = 89.62] [color={rgb, 255:red, 0; green, 0; blue, 0 }  ][line width=0.75]    (10.93,-3.29) .. controls (6.95,-1.4) and (3.31,-0.3) .. (0,0) .. controls (3.31,0.3) and (6.95,1.4) .. (10.93,3.29)   ;
\draw    (282.11,159.87) -- (197.99,210.76) ;
\draw [shift={(196.28,211.79)}, rotate = 328.83] [color={rgb, 255:red, 0; green, 0; blue, 0 }  ][line width=0.75]    (10.93,-3.29) .. controls (6.95,-1.4) and (3.31,-0.3) .. (0,0) .. controls (3.31,0.3) and (6.95,1.4) .. (10.93,3.29)   ;
\draw    (282.11,159.87) -- (393.11,160.73) ;
\draw [shift={(395.11,160.75)}, rotate = 180.44] [color={rgb, 255:red, 0; green, 0; blue, 0 }  ][line width=0.75]    (10.93,-3.29) .. controls (6.95,-1.4) and (3.31,-0.3) .. (0,0) .. controls (3.31,0.3) and (6.95,1.4) .. (10.93,3.29)   ;
\draw    (282.11,159.87) -- (202.01,133.23) ;
\draw [shift={(200.11,132.6)}, rotate = 18.4] [color={rgb, 255:red, 0; green, 0; blue, 0 }  ][line width=0.75]    (10.93,-3.29) .. controls (6.95,-1.4) and (3.31,-0.3) .. (0,0) .. controls (3.31,0.3) and (6.95,1.4) .. (10.93,3.29)   ;
\draw    (282.11,159.87) -- (342.92,227.27) ;
\draw [shift={(344.26,228.75)}, rotate = 227.94] [color={rgb, 255:red, 0; green, 0; blue, 0 }  ][line width=0.75]    (10.93,-3.29) .. controls (6.95,-1.4) and (3.31,-0.3) .. (0,0) .. controls (3.31,0.3) and (6.95,1.4) .. (10.93,3.29)   ;
\draw    (282.11,159.87) -- (316.81,93.79) ;
\draw [shift={(317.74,92.02)}, rotate = 117.7] [color={rgb, 255:red, 0; green, 0; blue, 0 }  ][line width=0.75]    (10.93,-3.29) .. controls (6.95,-1.4) and (3.31,-0.3) .. (0,0) .. controls (3.31,0.3) and (6.95,1.4) .. (10.93,3.29)   ;
\draw    (282.11,159.87) -- (293.92,36.59) ;
\draw [shift={(294.11,34.6)}, rotate = 95.47] [color={rgb, 255:red, 0; green, 0; blue, 0 }  ][line width=0.75]    (10.93,-3.29) .. controls (6.95,-1.4) and (3.31,-0.3) .. (0,0) .. controls (3.31,0.3) and (6.95,1.4) .. (10.93,3.29)   ;
\draw  [draw opacity=0] (208.1,217.42) .. controls (193.81,189.23) and (193.17,141.62) .. (208.52,97.77) .. controls (227.98,42.18) and (265.78,13.7) .. (292.95,34.15) .. controls (314.84,50.63) and (323.15,93.86) .. (315.37,139.12) -- (257.71,134.8) -- cycle ; \draw   (208.1,217.42) .. controls (193.81,189.23) and (193.17,141.62) .. (208.52,97.77) .. controls (227.98,42.18) and (265.78,13.7) .. (292.95,34.15) .. controls (314.84,50.63) and (323.15,93.86) .. (315.37,139.12) ;  
\draw  [draw opacity=0] (373.28,150.54) .. controls (388.77,155.86) and (401.05,162.78) .. (408.23,170.94) .. controls (429.93,195.59) and (396.84,222.16) .. (334.32,230.28) .. controls (271.79,238.4) and (203.51,225) .. (181.81,200.35) .. controls (160.11,175.7) and (193.2,149.13) .. (255.72,141.01) .. controls (297.46,135.59) and (341.77,139.76) .. (373.07,150.46) -- (295.02,185.64) -- cycle ; \draw   (373.28,150.54) .. controls (388.77,155.86) and (401.05,162.78) .. (408.23,170.94) .. controls (429.93,195.59) and (396.84,222.16) .. (334.32,230.28) .. controls (271.79,238.4) and (203.51,225) .. (181.81,200.35) .. controls (160.11,175.7) and (193.2,149.13) .. (255.72,141.01) .. controls (297.46,135.59) and (341.77,139.76) .. (373.07,150.46) ;  
\draw  [dash pattern={on 0.84pt off 2.51pt}]  (282.11,159.87) -- (208.1,217.42) ;
\draw  [dash pattern={on 0.84pt off 2.51pt}]  (282.11,159.87) -- (315.11,140.6) ;
\draw  [dash pattern={on 0.84pt off 2.51pt}]  (282.11,159.87) -- (265.11,140.6) ;
\draw  [draw opacity=0] (265.19,139.7) .. controls (264.57,127.92) and (265.87,116.78) .. (269.35,107.42) .. controls (280.29,78.01) and (308.63,77.44) .. (332.65,106.14) .. controls (356.66,134.85) and (367.27,181.96) .. (356.33,211.37) .. controls (353.41,219.22) and (349.25,225.02) .. (344.26,228.75) -- (312.84,159.4) -- cycle ; \draw   (265.19,139.7) .. controls (264.57,127.92) and (265.87,116.78) .. (269.35,107.42) .. controls (280.29,78.01) and (308.63,77.44) .. (332.65,106.14) .. controls (356.66,134.85) and (367.27,181.96) .. (356.33,211.37) .. controls (353.41,219.22) and (349.25,225.02) .. (344.26,228.75) ;  
\draw    (282.11,159.87) -- (358.94,180.35) ;
\draw [shift={(360.87,180.86)}, rotate = 194.92] [color={rgb, 255:red, 0; green, 0; blue, 0 }  ][line width=0.75]    (10.93,-3.29) .. controls (6.95,-1.4) and (3.31,-0.3) .. (0,0) .. controls (3.31,0.3) and (6.95,1.4) .. (10.93,3.29)   ;
\draw  [draw opacity=0] (314.06,195.56) .. controls (303.21,198.78) and (290.24,200.48) .. (276.35,200.1) .. controls (259.02,199.62) and (243.3,195.99) .. (231.46,190.41) -- (277.27,166.84) -- cycle ; \draw   (314.06,195.56) .. controls (303.21,198.78) and (290.24,200.48) .. (276.35,200.1) .. controls (259.02,199.62) and (243.3,195.99) .. (231.46,190.41) ;  
\draw   (291.82,196.21) .. controls (295.85,197.67) and (299.78,198.35) .. (303.6,198.23) .. controls (299.89,199.14) and (296.28,200.84) .. (292.79,203.34) ;
\draw  [draw opacity=0] (323.25,171.08) .. controls (322.23,174.33) and (320.64,177.46) .. (318.45,180.32) .. controls (315.52,184.15) and (311.85,187.08) .. (307.8,189.06) -- (294.61,162.1) -- cycle ; \draw   (323.25,171.08) .. controls (322.23,174.33) and (320.64,177.46) .. (318.45,180.32) .. controls (315.52,184.15) and (311.85,187.08) .. (307.8,189.06) ;  
\draw   (309.53,185.05) .. controls (313.5,183.43) and (316.87,181.29) .. (319.62,178.63) .. controls (317.48,181.8) and (315.95,185.49) .. (315.04,189.68) ;
\draw  [draw opacity=0] (223.09,139.77) .. controls (223.67,127.79) and (226.88,114.57) .. (232.9,101.55) .. controls (244.56,76.32) and (263.56,58.65) .. (281.28,54.3) -- (271.72,119.49) -- cycle ; \draw   (223.09,139.77) .. controls (223.67,127.79) and (226.88,114.57) .. (232.9,101.55) .. controls (244.56,76.32) and (263.56,58.65) .. (281.28,54.3) ;  
\draw   (247.39,72.62) .. controls (251.48,71.3) and (255,69.44) .. (257.95,67.01) .. controls (255.57,70) and (253.76,73.55) .. (252.52,77.66) ;
\draw  [fill={rgb, 255:red, 0; green, 0; blue, 0 }  ,fill opacity=1 ] (292.24,34.6) .. controls (292.24,33.56) and (293.07,32.73) .. (294.11,32.73) .. controls (295.14,32.73) and (295.98,33.56) .. (295.98,34.6) .. controls (295.98,35.63) and (295.14,36.47) .. (294.11,36.47) .. controls (293.07,36.47) and (292.24,35.63) .. (292.24,34.6) -- cycle ;
\draw  [fill={rgb, 255:red, 0; green, 0; blue, 0 }  ,fill opacity=1 ] (198.24,132.6) .. controls (198.24,131.56) and (199.07,130.73) .. (200.11,130.73) .. controls (201.14,130.73) and (201.98,131.56) .. (201.98,132.6) .. controls (201.98,133.63) and (201.14,134.47) .. (200.11,134.47) .. controls (199.07,134.47) and (198.24,133.63) .. (198.24,132.6) -- cycle ;

\draw (260,7.95) node [anchor=north west][inner sep=0.75pt]   [align=left] {$z$};
\draw (175,210.72) node [anchor=north west][inner sep=0.75pt]   [align=left] {$x$};
\draw (395,145.44) node [anchor=north west][inner sep=0.75pt]   [align=left] {$y$};
\draw (345,230) node [anchor=north west][inner sep=0.75pt]   [align=left] {$x''$};
\draw (320,75.88) node [anchor=north west][inner sep=0.75pt]   [align=left] {$y'$};
\draw (180,102.86) node [anchor=north west][inner sep=0.75pt]   [align=left] {$-z'$};
\draw (365,180) node [anchor=north west][inner sep=0.75pt]   [align=left] {$x'$};
\draw (297,15) node [anchor=north west][inner sep=0.75pt]   [align=left] {$\displaystyle \mu _{2}$};
\draw (259.36,205.65) node [anchor=north west][inner sep=0.75pt]   [align=left] {$\displaystyle \psi $};
\draw (324.86,179.72) node [anchor=north west][inner sep=0.75pt]   [align=left] {$\displaystyle \phi $};
\draw (235.84,45.37) node [anchor=north west][inner sep=0.75pt]   [align=left] {$\displaystyle \pi- \theta $};
\draw (176.27,124.37) node [anchor=north west][inner sep=0.75pt]   [align=left] {$\displaystyle \mu _{1}$};

\end{tikzpicture}

\caption{Euler angles and the body frame. Notation $-z'$ signifies that the $z'$-axis of the moving system points in the opposite direction.}
\label{fig: two coord systems}
\end{figure}
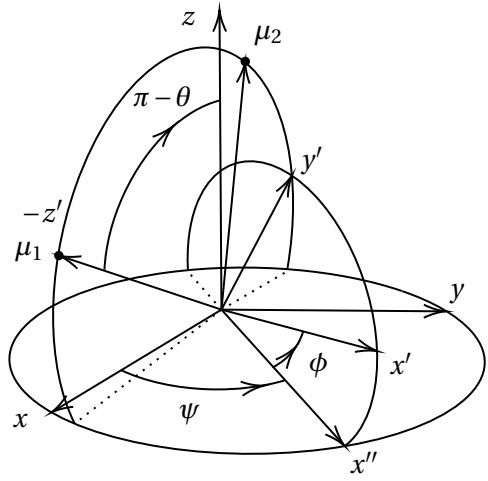
However, $\mathcal{Q}$ is 8-dimensional and equations (\ref{eq:general one}) are a rather  complicated system. In order to simplify the problem, one might use symplectic reduction with respect to the aforementioned $SO(3)$-symmetry of the problem. This reduction was accomplished by Borisov, Mamaev, Montaldi and Garc\'ia-Naranjo in \cite{borisov2018reduction}.  We give a brief description of if below; for more details and  uses of this method, see \cite{borisov2018reduction}, \cite{garcia2021attracting} and \cite{balabanova2021two}.

To 'separate' the symmetric action of the group, we re-parameterise the phase space. We introduce the new 'body frame' with axes $x',\ y', \ z'$ ( $x,y,z$ denoting the old coordinate axes ) that moves with the two bodies in the following way: the coordinate vector of the first body always coincides with the $z'$-axis of the new system, and the basis changes in such a way that the second body is always in $z'-y'$ plane. We denote by $q$ the angle between the two coordinate vectors and without loss of generality we may assume that the initial positions of the two masses are, respectively, $\mathbf{x}_1=(0,0,-1)$ and $\mathbf{x}_2 = (0,\sin(q), -\cos(q)) $. 

As we have stated, the form of our potential prohibits collisions as well as the two bodies from occupying antipodal points on the sphere. Hence, $q\in I:=(0,\pi)$. We suppose that  $g\in SO(3)$ and $\zeta\in TSO(3)$.

Since  $TSO(3)$ can be trivialised as $ SO(3)\times\mathfrak{so}(3)$, we can write
\begin{equation}
\label{eq: spherical coords two particleS}
    \begin{split}
       & TI\times TSO(3)\to T\mathcal{Q}\\
        &(q,\dot{q}, \theta,\phi, \psi,\boldsymbol\omega_1,\boldsymbol\omega_2,\boldsymbol\omega_3)\mapsto(g\cdot \mathbf{x}_1, g\cdot \mathbf{x}_2, g\zeta\cdot\mathbf{x}_1, g\zeta\cdot\mathbf{x}_2 + g\mathbf{x}_2'\dot{q}),
    \end{split}
\end{equation}
with $g$ expressed through the three Euler angles as 
\begin{footnotesize}
\begin{equation} \label{eq:Euler angles}
g = \left( \begin{array}{c c c}
    \cos(\phi)\cos(\psi) - \cos(\theta)\sin(\psi)\sin(\phi)& -\sin(\phi)\cos(\psi)-\cos(\theta)\sin(\psi)\cos(\phi)& \sin(\theta)\sin(\psi)\\
    \cos(\phi)\sin(\psi) + \cos(\theta)\cos(\psi)\sin(\phi)& -\sin(\phi)\sin(\psi)+\cos(\theta)\cos(\psi)\cos(\phi)& -\sin(\theta)\cos(\psi)\\
   \sin(\theta)\sin(\phi)& \sin(\theta)\cos(\phi)& \cos(\theta)
    \end{array}\right),
\end{equation}
    \end{footnotesize}
see Figure \ref{fig: two coord systems} for illustration. 
The variable  $\zeta$ has  the physical meaning of  angular velocity in the body frame (\cite{marsden2013introduction}).
After this change the Hamiltonian turns into 
\begin{small}
\begin{equation}
\label{eq:hamilt red}
\begin{split}
\mathcal{H} \ = \ & \frac{1}{2\mu_1\mu_2 }\biggl(
\mu_2  \left((m_1-p)^2+m_2^2\right)+m_3 \left(-2 \mu_2  m_2 \cot (q)+\mu_1  m_3 \csc^2q+\mu_2  m_3 \cot^2q\right)\biggr)+{}\\&\qquad{}+\frac{ p^2}{2 \mu_2 } +  V(q).
\end{split}
\end{equation}\end{small}
The  non-zero Poisson brackets in the reduced variables are given by 
    \begin{align}
    \label{Poiss}
        &\{m_1,m_2\} = -m_3,\ &  &\{m_2,m_3\} = -m_1 ,\nonumber\\ &\{m_1,m_3\} = m_2, \ &   \   &\{q,p\} = 1.
    \end{align}
    The three invariant values 'collapse' into one Casimir function
    \begin{equation}
        \label{eq: Casimir}
        \mathcal{C} = m_1^2  + m_2^2 + m_3^2.
    \end{equation}

    For the purposes of this work, we choose $V(q)= -\mu_1\mu_2\cot(q)$, where $q$ is the angle between the two coordinate vectors of the particles. This is a standard choice of potential for the $N$-body problem on a sphere (\cite{borisov2005superintegrable}) (and its hyperbolic analogue, on a Lobachevsky plane), arising as well as one of the  solutions of the generalized Bertrand problem of finding potentials depending only on geodesic distance and with closed orbits  \cite{kozlov1992kepler}. 

    With these assumptions made, the five-dimensional equations of motion have the form
    \begin{equation}
        \label{eq: reduced two bodies}
    \begin{cases}
    \dot{m}_1 = \frac{1}{\mu_1 \mu_2}(-m_2 m_3 \mu_2+\mu_2 \cot(q) (-m_2^2+m_3^2+m_2 m_3 \cot(q))+m_2 m_3 \mu_1 \csc(q)^2)\\\dot{m}_2 = \frac{1}{(\mu_1 \mu_2)}(m_3 (2 m_1-p) \mu_2+m_1 m_2 \mu_2 \cot(q)-m_1 m_3 (\mu_1+\mu_2) \csc(q)^2)\\\dot{m}_3 = \frac{1}{\mu_1}(m_2 p-m_1 m_3 \cot(q))\\\dot{q}  = \frac{1}{\mu_1 \mu_2}(-m_1 \mu_2+p (\mu_1+\mu_2))\\\dot{p} = \frac{1}{\mu_1 \mu_2}((-\mu_2 (m_2 m_3+\mu_1^2 \mu_2)+m_3^2 (\mu_1+\mu_2) \cot(q)) \csc(q)^2)
    
    \end{cases}
    \end{equation}
    We consider the specific case with two identical bodies for simplicity, but our analysis can be carried out, `mutatis mutandis', for general values of the masses; without loss of generality we may assume that $\mu_1 = \mu_2 = 1$, which turns (\ref{eq: reduced two bodies}) into
    
\begin{equation}
\label{eq: two equal masses} 
\begin{cases}
\dot{m}_1 = \cot(q)\left(-m_2^2 + m_3^2 + 2m_2m_3\cot(q)\right),\\
\dot{m}_2 = m_1m_2\cot(q)-m_3(-2m_1 + p + 2m_1\csc^2(q)),\\
\dot{m_3} = m_2p  - m_1m_3\cot(q),\\
\dot{q}  = 2p-m_1,\\
\dot{p} = -\csc^2(q)(1  +m_2m_3 - 2m_3^2\cot(q)).
\end{cases}
\end{equation}
We introduce the two classes of orbits we are focused on in this work:
\begin{definition}
If $\bq_1(t)-\bq_2(t)\to 0$, we call such a trajectory a \textbf{collision trajectory} or a \textbf{collision singularity}. If $\bq_1(t)+\bq_2(t)\to0$, we refer to such a trajectory as an \textbf{antipodal singularity}. The encompassing term for the trajectories of both kinds is \textbf{singular trajectories} 
\end{definition}

\section{Singular trajectories}
\subsection{Omega limit sets of collision singularities}
In this section we study singular trajectories (in particular collision trajectories) using reduced variables. The goal of this section is to show that for an initial datum $D$ that corresponds to collision trajectories, the omega-limit set $\omega(D)$ is a singleton, for which the reduced coordinates are $\{m_1=m_1^*, m_2=m_2^*, m_3=m_3^*=0, q=0, p=-\infty\}.$ Recall that the $\omega$-limit set of a solution $x(t,D)$ of a vector field $X$ in $\mathbb{R}^n$ with initial data $x(0,D)=D$ is given by 
$$\omega(D):=\{p\in \mathbb{R}^n \;  \text{ such that } \; \exists \{t_k\}_{k\in \mathbb{N}},\; t_k\nearrow t^* \text{ with } \; \lim_{k\rightarrow \infty}x(t_k, D)=p\},$$
where $t^*$ is in our case the collision time. 

While proving this, we will show that the time to collision is finite and that antipodal singularities can never occur.  

In general it is very difficult to determine the omega-limit set of a solution of a differential equation (see for instance \cite{arsie2010locating} for a general method to determine it).

To recast  system (\ref{eq: two equal masses}) in a form more amenable to treatment we start with the following observation the proof of which is immediate: 

\begin{lemma}\label{st:lemma1}
Consider an ODE in an open subset $U$ of \  $\mathbb{R}^k$ with coordinates $x^1, \dots, x^k$, $\dot x^i=f^i(x^1, , \dots, x^k),$ $i=1,\dots, k$. Suppose this vector field admits $l$ conserved quantities given by smooth functions $C_1, \dots C_l$. Let $\lambda: x\mapsto y(x)$ be a diffeomorphism on $U$. Then the system of ODE written with respect to the $y$ coordinates admits $l$ conserved quantities described by the function $C_1, \dots, C_l$ written in the $y$ coordinates.
\end{lemma}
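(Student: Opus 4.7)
The plan is to observe that the notion of a conserved quantity is intrinsic: it depends only on the solution curves of the vector field as subsets of $U$, not on the coordinates chosen to write them down. Since a diffeomorphism sends solution curves of the original system bijectively to solution curves of the transformed system, any function that is constant along the former (when read in $x$-coordinates) is constant along the latter (when read in $y$-coordinates).

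Concretely, I would let $x(t)$ be a solution of $\dot{x}^i = f^i(x)$ and set $y(t) := \lambda(x(t))$. By the chain rule $y(t)$ satisfies the transformed ODE $\dot{y}^j = \sum_i \frac{\partial \lambda^j}{\partial x^i}(x(t))\, f^i(x(t))$, which, after substituting $x(t) = \lambda^{-1}(y(t))$, becomes an autonomous ODE in $y$. For any conserved quantity $C_a$ of the original system, define $\tilde{C}_a := C_a \circ \lambda^{-1}$ on $\lambda(U)$. Then along a transformed solution
\[
\frac{d}{dt}\tilde{C}_a(y(t)) = \frac{d}{dt} C_a(\lambda^{-1}(\lambda(x(t)))) = \frac{d}{dt} C_a(x(t)) = 0,
\]
by hypothesis. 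Hence $\tilde{C}_a$ is conserved under the $y$-dynamics. Smoothness of $\tilde{C}_a$ is inherited from that of $C_a$ and of $\lambda^{-1}$ (which is smooth because $\lambda$ is a diffeomorphism).

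Finally, to argue that the $l$ new conserved quantities remain independent (so that we genuinely have $l$ of them, not fewer), I would note that functional independence is preserved by composition with a diffeomorphism: the Jacobian of $(\tilde{C}_1,\dots,\tilde{C}_l)$ at $y$ equals the Jacobian of $(C_1,\dots,C_l)$ at $\lambda^{-1}(y)$ multiplied by $D(\lambda^{-1})(y)$, and the latter is invertible. There is essentially no obstacle here; the only thing to be mildly careful about is making clear that ``the same functions written in $y$'' means $C_a \circ \lambda^{-1}$, which is the standard interpretation of changing coordinates in a scalar function.
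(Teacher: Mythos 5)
Your proof is correct and is exactly the standard chain-rule argument; the paper itself omits a proof entirely, stating the lemma's proof is immediate, and your write-up (conserved quantities transform as $C_a\circ\lambda^{-1}$, constancy along solutions is coordinate-independent) is precisely the argument being alluded to. The remark on functional independence is a harmless extra, since the lemma as stated does not claim independence.
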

In particular, the coordinates transformation used in Lemma \ref{st:lemma1} does not need to be canonical.

Lemma \ref{st:lemma1} allows us to introduce a new variable and make the system (\ref{eq: two equal masses}) a polynomial one, while keeping both $\mathcal{H}$ and $\mathcal{C}$ as invariant functions, albeit with a coordinate transformation. Let $\xi = \cot(q)$. Then $\dot{\xi} = -\csc^2(q), \ \dot{q} = -(\xi^2 + 1)\dot{q}$, and the system (\ref{eq: two equal masses}) turns into 
\begin{equation}
    \label{eq: two equal masses with xi}
    \begin{cases}
    \dot{m}_1 = \xi\left(-m_2^2 + m_3^2 + 2m_2m_3\xi\right),\\
\dot{m}_2 = m_1m_2\xi-m_3p - 2m_1m_3\xi^2,\\
\dot{m_3} = m_2p  - m_1m_3\xi,\\
\dot{\xi}  = - (\xi^2 + 1)( 2p-m_1),\\
\dot{p} = -(\xi^2 + 1)(1  +m_2m_3 - 2m_3^2\xi),
    \end{cases}
\end{equation}
with the energy function
\begin{equation}
    \label{eq: the Ham}
    \mathcal{H} = \frac12\left(m_1^2 + m_2^2  - 2m_1p + 2p^2 + \xi(-2-2m_2m_3 + m_3^2\xi) + m_3^2(1 + \xi^2)\right).
\end{equation}
The Casimir $\mathcal{C}$, clearly, does not change. 

By definition, collision orbits  have $q(t)\to0$, and, consequently, $\xi(t)\to+\infty$. However, the behaviours of the rest of the variables on collision trajectories are not immediately clear. 

On the common level set of the Hamiltonian $\mathcal{H} = h$ and the Casimir $\mathcal{C} = C$, we have 
\begin{equation}
\label{eq:Ham and Cas common level set}
2p^2 - 2m_1p + 2m_3^2\xi^2 - 2\xi(1 + m_2m_3) = 2h-C
\end{equation}
or
\begin{equation}
    \label{eq: estimate p}
2\left(p - \frac{m_1}{2}\right)^2  + \left(\sqrt{2}m_3\xi-\frac{m_2}{\sqrt{2}}\right)^2  - 2\xi= 2h-C + \frac{m_1^2 + m_2^2}{2},
\end{equation}
\begin{lemma}
There are no trajectories leading to antipodal singularities.
\end{lemma}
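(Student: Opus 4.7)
The plan is to derive a contradiction directly from equation (\ref{eq: estimate p}) on the common level set $\{\mathcal{H}=h,\ \mathcal{C}=C\}$. An antipodal singularity by definition forces $q\to\pi^-$ along the trajectory, which via the change of variable $\xi=\cot(q)$ is equivalent to $\xi\to-\infty$. I would first record this observation as the translation of the hypothesis into the polynomial system (\ref{eq: two equal masses with xi}).

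Next, I would bound the right-hand side of (\ref{eq: estimate p}). Since $\mathcal{C}=m_1^2+m_2^2+m_3^2=C$ is conserved along the flow, we have $m_1^2+m_2^2 = C-m_3^2 \le C$, so
\[
2h-C+\frac{m_1^2+m_2^2}{2} \ \le\ 2h-\frac{C}{2},
\]
which is a constant depending only on the fixed values $h$ and $C$. On the other hand, the left-hand side of (\ref{eq: estimate p}) is the sum of two non-negative squares and the term $-2\xi$. If $\xi\to-\infty$ along the trajectory, then $-2\xi\to+\infty$, and so the left-hand side tends to $+\infty$. This contradicts the uniform upper bound on the right-hand side, establishing that $\xi$ cannot diverge to $-\infty$ in finite or infinite time.

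In the writeup I would present the argument in this compact form; the step that required actual thought was the rewriting (\ref{eq:Ham and Cas common level set})–(\ref{eq: estimate p}) that completes the square in $p$ and in the pair $(m_2,m_3\xi)$ while isolating the linear-in-$\xi$ term $-2\xi$, but this has already been carried out just before the lemma. The only mild subtlety to flag is that $m_1,m_2,m_3$ are automatically bounded along trajectories by $\sqrt{C}$, so no separate control of these variables is needed; the Casimir alone suffices. Since no obstacle arises beyond this bookkeeping, I do not foresee any main difficulty.
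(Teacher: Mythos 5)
Your argument is correct and is essentially the paper's own proof: both derive the contradiction from equation (\ref{eq: estimate p}), noting that $\xi\to-\infty$ makes the left-hand side (two non-negative squares plus $-2\xi$) unbounded while the right-hand side stays bounded. Your explicit remark that the Casimir bounds $m_1^2+m_2^2$ just spells out why the right-hand side is bounded, which the paper leaves implicit.
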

\begin{proof}
On a trajectory leading to an antipodal singularity, $q\to\pi^-$ and, consequently, $\xi\to-\infty$. The left hand side of (\ref{eq: estimate p}) must, therefore, be unbounded, in contrast with the right hand side.
\end{proof}
\begin{rmk}\label{rmk3.3}
    One can easily check through completing the squares in (\ref{eq:hamilt red})   in the spirit of (\ref{eq: estimate p}) that this lemma holds for any two arbitrary positive values of $\mu_1$ and $\mu_2$, provided the potential is cotangent; the expression in question will be 
    \begin{equation*}
\begin{split}
&\left(\sqrt{\mu_1 + \mu_2}p-\frac{\mu_2}{\sqrt{\mu_1 + \mu_2}}m_1\right)^2 + \left(\sqrt{\mu_1 + \mu_2}\xi m_3 - \frac{\mu_2}{\sqrt{\mu_1 + \mu_2}}m_2\right)^2 - 2\mu_1^2\mu_2^2\xi=\\=& \  2\mu_1\mu_2h - \mu_2C + \frac{\mu_2^2}{\mu_1 + \mu_2}(m_1^2+ m_2^2) + (\mu_2-\mu_1)m_3^2
\end{split}
    \end{equation*}
\end{rmk}

\begin{lemma}
On the trajectories tending to a collision, $\xi\to +\infty$ and $m_3\to0$.
\end{lemma}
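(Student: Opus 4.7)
The first assertion $\xi\to+\infty$ is immediate from the definition of a collision trajectory: by definition $q\to 0^+$, so $\xi=\cot(q)\to+\infty$. The substantive content of the lemma is the decay of $m_3$, and my plan is to extract it directly from the joint level-set relation (\ref{eq: estimate p}) together with the Casimir.

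First I would observe that along any trajectory, the Casimir $\mathcal{C}=m_1^2+m_2^2+m_3^2=C$ keeps the triple $(m_1,m_2,m_3)$ confined to a sphere of radius $\sqrt{C}$, so in particular $m_1$ and $m_2$ are uniformly bounded (by $\sqrt{C}$) for all time. Consequently the right-hand side of (\ref{eq: estimate p}),
\begin{equation*}
2h-C+\frac{m_1^2+m_2^2}{2},
\end{equation*}
is a bounded function of $t$ along the trajectory.

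Next I would rewrite (\ref{eq: estimate p}) as
\begin{equation*}
2\!\left(p-\tfrac{m_1}{2}\right)^{\!2}+\left(\sqrt{2}\,m_3\xi-\tfrac{m_2}{\sqrt{2}}\right)^{\!2}=2\xi+\left(2h-C+\tfrac{m_1^2+m_2^2}{2}\right).
\end{equation*}
Since the parenthesised term is bounded and $\xi\to+\infty$, both non-negative squares on the left are bounded above by $2\xi+O(1)$. In particular,
\begin{equation*}
\left|\sqrt{2}\,m_3\xi-\tfrac{m_2}{\sqrt{2}}\right|\ \leq\ \sqrt{2\xi+O(1)},
\end{equation*}
so by the triangle inequality
\begin{equation*}
\sqrt{2}\,|m_3|\,\xi\ \leq\ \tfrac{|m_2|}{\sqrt{2}}+\sqrt{2\xi+O(1)}\ =\ O(\sqrt{\xi}),
\end{equation*}
because $|m_2|\leq\sqrt{C}$ is bounded. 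Dividing by $\xi$ yields $|m_3|=O(1/\sqrt{\xi})\to 0$, which is the desired conclusion.

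There is no serious obstacle: once one sees that $\mathcal C$ forces $m_1,m_2$ to stay bounded, the completed-square form (\ref{eq: estimate p}) of the Hamiltonian constraint delivers both assertions essentially for free, and in fact provides the slightly stronger quantitative bound $|m_3|\preccurlyeq 1/\sqrt{\xi}$ that will be useful later in the paper.
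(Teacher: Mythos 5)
Your proof is correct, and it rests on the same key ingredient as the paper: the completed-square form (\ref{eq: estimate p}) of the joint Hamiltonian--Casimir constraint, together with the boundedness of $m_1,m_2$ forced by the Casimir sphere. The difference is in how that ingredient is used. The paper argues by contradiction: if $m_3\not\to 0$, then along a suitable sequence of times the square $\left(\sqrt{2}m_3\xi-\tfrac{m_2}{\sqrt{2}}\right)^2$ grows like $\xi^2$, overpowering $-2\xi$ and making the left-hand side unbounded while the right-hand side stays bounded. You instead solve the identity directly for the $m_3$-square and read off the pointwise inequality $\sqrt{2}\,|m_3|\,\xi\leq \tfrac{|m_2|}{\sqrt{2}}+\sqrt{2\xi+O(1)}$, which is legitimate once $t$ is close enough to $t^*$ that $\xi>0$ and $2\xi+O(1)>0$ (both automatic near collision). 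This direct route not only gives $m_3\to 0$ but already yields the quantitative bound $m_3\preccurlyeq\xi^{-1/2}$, which the paper establishes only later, in a separate lemma and again by contradiction; so your single computation subsumes both that lemma and the present one.
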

\begin{proof}
The first statement has been mentioned before and is clear.  The second one immediately follows from (\ref{eq: estimate p}): if $m_3\not\to 0$, then there exists a sequence of times $\{t_k\}$ converging to collision time along which the left hand side of \eqref{eq: estimate p} grows as $\xi^2$ and is therefore unbounded, while the right hand side remains bounded. \end{proof}

The main tool used in this section is a sort of growth analysis of the variables involved as the particles approach collision. Let us remark that since the vector field \eqref{eq: two equal masses with xi} is polynomial, the solutions curves are smooth. In order to proceed, we need to introduce some notation. 

\begin{definition}\label{def:asymptotic}
Consider two smooth functions $f(t)$ and $g(t)$ of time. Let $t^*$ be the collision time. Then we write
\begin{enumerate}
    \item $f\succ g$ with $t\to t^{\ast}\in\bar{\mathbb{R}}$ if $\lim\limits_{t\to t^{\ast}}\frac{f}{g} = \infty$. In this case, in a suitable left neighbourhood of $t^*$ we can represent $f(t)=h(t)g(t)$, where $h(t)$ is a smooth function such that $\lim\limits_{t\to t^{\ast}}h(t)=\infty.$
     \item $f\prec g$ with $t\to t^{\ast}\in\bar{\mathbb{R}}$ if $\lim\limits_{t\to t^{\ast}}\frac{f}{g} = 0$.  In this case, in a suitable left neighbourhood of $t^*$ we can represent $f(t)=h(t)g(t)$, where $h(t)$ is a smooth function such that $\lim\limits_{t\to t^{\ast}}h(t)=0.$
     \item $f\sim g$ if there exists a left neighbourhood of $t^*$, where  $f = h g$, with $h$ a bounded function that \textbf{does not} tend to 0. (Of course, there can be a sequence of times $\{t_k\}$ converging to $t^*$ from below such that $\lim\limits_{k\to +\infty}h(t_k)=0$.) 
     \item if $f\sim g$ or $f\succ g$, we write $f\succcurlyeq g$; in the similar case for $f\prec g$, the notation is $f\preccurlyeq g$;
\end{enumerate}

When there is no need to specify an bounded function, we denote it by $\overline{O}$. Similarly, $\overline{O}(x^{\alpha})$ represents a bounded function $\overline{O}$ multiplied by $x^{\alpha}$.
\end{definition}
\begin{rmk}
The relations introduced in Definition \ref{def:asymptotic} are not a total order in general, since not all functions are comparable: given a function, we can create one incomparable to it by multiplying it by unbounded oscillating function. For instance, $f(t)=t$ and $g(t)=t^2(\cos(t)+1)+1$ or $g(t)=t^2(\cos(t)+1)$ are incomparable to each other. 
\end{rmk}
The $\sim$ relation in Definition \ref{def:asymptotic} is not an equivalence: since we are allowing the function $h$ to assume zero values, $\sim$ is not  symmetric nor transitive in general. Nonetheless, in certain cases it has some nice  properties:
\begin{lemma}
\label{st: lemma about the relations}
Consider relations $\sim$ and $\preccurlyeq$ from Definition \ref{def:asymptotic}. The following hold:
\begin{enumerate}
\item the relation $\preccurlyeq$ is transitive;
\item if $f=hg$ where $h$ is bounded and bounded away from zero in a left-neighbourhood of $t^*$, then the relation $f\sim g$ is equivalent to $g\sim f$;
    \item suppose $f\sim g$ with $f = f_1g$ and $g\sim h$ with $g = g_1h$. Then if $\lim\limits_{t\to t^{\ast}}g_1 = C_1\ne 0 $ or $\lim\limits_{t\to t^{\ast}}h_1 = C_2\ne 0 $, where $C_1,\ C_2$ are constants, $f\sim h$.  
\end{enumerate}
\begin{proof}
The  second entry is straightforward; we need to discuss the first and the third.  In order to prove  the third, we observe that from the equalities in point (3), we have $f = f_1g_1h$, with $f_1$ or $g_1$ bounded away from zero. If one of them tends to a nonzero constant, the product can't tend to zero either. 

To demonstrate the first, we stress that our definition of the relation $f\preccurlyeq g$ means only that $g$ is $f$ multiplied by a bounded function: we don't really know anything else. 

So if $f\preccurlyeq g$ and $g\preccurlyeq f$, then there exists a left neighbourhood of $t^{\ast}$ and two bounded functions $f_1$ and $g_1$, such that $f = f_1g$ and $g = g_1h$, meaning that $f = f_1g_1h$. Now, $f_1g_1$ is a bounded function, and that is all we need to know in order to assert that $f\preccurlyeq h$. 
\end{proof}
\end{lemma}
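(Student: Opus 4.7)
The plan is to unwind the definitions from Definition \ref{def:asymptotic} in each of the three items and verify the required boundedness and non-vanishing conditions directly, exploiting the fact that the composition of bounded functions is bounded.

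For item (1), I would start by rewriting the hypotheses $f\preccurlyeq g$ and $g\preccurlyeq h$ in their product form: on a suitable left neighbourhood of $t^{\ast}$, $f = f_1 g$ with $f_1$ bounded, and on another left neighbourhood $g = g_1 h$ with $g_1$ bounded. Intersecting the two neighbourhoods and substituting gives $f = (f_1 g_1) h$, and a product of two bounded functions is bounded. This is exactly the representation demanded by $f\preccurlyeq h$. The subtle point worth emphasising is that $\preccurlyeq$ requires \emph{only} boundedness of the multiplier, with no non-vanishing condition attached, so there is nothing further to verify.

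For item (2), the hypothesis that $h$ is bounded and bounded away from zero on some left neighbourhood of $t^{\ast}$ lets me invert: $1/h$ is well-defined there, is bounded (because $h$ is bounded away from zero), and is itself bounded away from zero (because $h$ is bounded). Hence $1/h$ cannot tend to zero, and writing $g = (1/h) f$ exhibits $g$ as $f$ times a bounded function that does not tend to zero, giving $g\sim f$. The converse direction is identical. The content of the item is precisely that the pathology preventing $\sim$ from being symmetric (namely, $h$ hitting zero along a sequence) is ruled out by the uniform lower bound.

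For item (3), I would compose the two decompositions $f = f_1 g$ and $g = g_1 h$ to obtain $f = (f_1 g_1) h$ on a common left neighbourhood; the candidate multiplier $f_1 g_1$ is bounded as a product of bounded functions, so the only remaining issue is to show $f_1 g_1 \not\to 0$. If $g_1 \to C_1 \neq 0$, then from $f_1 = (f_1 g_1)/g_1$ any limit $f_1 g_1 \to 0$ would force $f_1 \to 0$, contradicting $f\sim g$; the case in which $f_1$ (I read the ``$h_1$'' in the statement as a typo for $f_1$) tends to a nonzero constant is symmetric, interchanging the roles and using $g\sim h$. Hence $f_1 g_1$ is a bounded multiplier that does not tend to zero, giving $f\sim h$.

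The main obstacle is really only bookkeeping: one must keep firmly in mind that a bounded function that does not tend to zero may still attain the value zero along a sequence, so the product of two such functions can perfectly well tend to zero. The nonzero-constant hypothesis in (3) is precisely what is strong enough to block this failure mode, and no more is claimed; in particular I do not expect $\sim$ to become transitive in general, and the lemma is careful not to assert it.
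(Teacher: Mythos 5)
Your proposal is correct and follows essentially the same route as the paper's proof: the same product decompositions $f=f_1g$, $g=g_1h$ giving $f=(f_1g_1)h$, the observation that $\preccurlyeq$ only demands a bounded multiplier, and the same division-by-the-convergent-factor contradiction for item (3) (your reading of ``$h_1$'' as $f_1$ matches the paper's own argument). You merely spell out item (2) and the inversion of $h$, which the paper dismisses as straightforward.
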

A preliminary estimate on the velocity of decline of $m_3$ is given by the following:

\begin{lemma}
On the trajectories tending to collision, $m_3\preccurlyeq\xi^{-\frac12}$. 
\end{lemma}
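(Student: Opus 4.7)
The plan is to read off the bound directly from the energy--Casimir identity (\ref{eq: estimate p}), which after a trivial rearrangement reads
$$2\!\left(p - \tfrac{m_1}{2}\right)^2 + \left(\sqrt{2}\,m_3\xi - \tfrac{m_2}{\sqrt{2}}\right)^2 \;=\; 2\xi \;+\; 2h - C + \tfrac{m_1^2+m_2^2}{2}.$$
On a collision trajectory $\xi\to+\infty$, while $m_1$ and $m_2$ are automatically bounded because $m_1^2+m_2^2 \le \mathcal{C}=C$ along any orbit; the right-hand side of the displayed identity is therefore $2\xi + O(1)$ in a suitable left neighbourhood of the collision time $t^*$.

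Since both summands on the left are non-negative, each is individually bounded above by the right-hand side. Applying this to the second summand gives
$$\left|\sqrt{2}\,m_3\xi-\tfrac{m_2}{\sqrt{2}}\right|\;\le\;\sqrt{2\xi+O(1)},$$
and then the triangle inequality, combined with the uniform bound on $m_2$, yields $|m_3\xi| \le \sqrt{\xi}+O(1)$ on a left neighbourhood of $t^*$. Dividing by $\sqrt{\xi}$ gives $|m_3\sqrt{\xi}|\le 1+O(\xi^{-1/2})$, so the smooth function $h(t):=m_3(t)\sqrt{\xi(t)}$ is bounded near $t^*$. By Definition \ref{def:asymptotic}(4) this is exactly the statement $m_3 \preccurlyeq \xi^{-1/2}$.

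I do not expect a real obstacle here: the argument uses only the two conservation laws, the already established facts $\xi\to+\infty$ and $m_3\to 0$, and the completion of the square that produced (\ref{eq: estimate p}) in the first place. The only small point worth flagging is that Definition \ref{def:asymptotic}(4) requires the multiplier $h(t)$ to be bounded on a one-sided neighbourhood of $t^*$ rather than only along a sequence; but since $\xi(t)\to+\infty$ as $t\to t^*$, the estimate above applies for all $t$ sufficiently close to $t^*$, which is precisely what the definition demands.
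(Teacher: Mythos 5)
Your proof is correct and rests on exactly the same ingredient as the paper's, namely the completed-square form \eqref{eq: estimate p} of the energy--Casimir level set together with the boundedness of $m_1,m_2$; the only difference is that you extract the bound $|m_3\sqrt{\xi}|\le 1+O(\xi^{-1/2})$ directly, whereas the paper argues by contradiction assuming $m_3=g(t)\xi^{-1/2}$ with $g$ unbounded. Your direct phrasing is, if anything, slightly cleaner and yields an explicit constant, and your final appeal to Definition \ref{def:asymptotic} (bounded multiplier on a left neighbourhood of $t^*$ gives either $\prec$ or $\sim$, hence $\preccurlyeq$) is exactly how the paper uses the relation.
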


\begin{proof}
Assume for the sake of contradiction that this is not true. This entails that $m_3$ can be represented as $ m_3 = g(t)\xi^{-\frac12}$, where $g(t)$ is a smooth function which is unbounded in some suitable left neighbourhood of $t^*$. In particular $g(t)$ might converge to $\infty$ for $t\to t^*$ from below (in which case $m_3\succ \xi^{-\frac12}$), or be simply unbounded.  

Under this assumption, (\ref{eq: estimate p}) turns into
\begin{equation}
\label{eq: est m3}
2\left(p -\frac{m_1}{2}\right)^2 
+ 2\xi\left(g^2(t)-1\right) - 2m_2g(t)\sqrt{\xi} = 2h-C  +\frac{m_1^2}{2}.
\end{equation}
Assume that $|g(t)|\to +\infty$ on a sequence $\{t_n\},$ as $n\to+\infty$. Then 
\[
2\left(p(t_n) -\frac{m_1(t_n)}{2}\right)^2  + 2\left(\sqrt{\xi(t_n)}\sqrt{g^2(t_n)-1} - \frac{m_2(t_n)}{2}\frac{g(t_n)}{\sqrt{g^2(t_n)-1}}\right)^2 = 2h-C  +\frac{m_1^2}{2} + \frac{m_2^2}{2}\frac{g^2(t_n)}{g^2(t_n)-1}
\]
The left hand side is unbounded and assumes large positive values, whereas the right hand side is bounded (since it can be easily seen that, for example, $\frac{g^2(t_n)}{g^2(t_n)-1}<2$), which leads to a contradiction. 
Therefore, $m_3\preccurlyeq\xi^{-\frac12}$.
\end{proof}

\begin{rmk}
\label{st:remark about p}
From the statement above we can demonstrate that  $|p|\preccurlyeq\sqrt{\xi}$. Indeed, looking at \eqref{eq: estimate p}, when $m_3\prec \xi^{-\frac12}$, the expression $\left(\sqrt{2}m_3\xi - \frac{m_2}{\sqrt{2}}\right)^2$ is 'overpowered' by $-2\xi$, and $|p|\sim \sqrt{\xi}$. 

When $m_3\sim\xi^{-\frac12}$, $m_3$ can be rewritten as $m_3 = \frac{\tilde{g}(t)}{\sqrt{\xi}}$, where $\tilde{g}(t)$ is some bounded function that doesn't tend to 0, giving us an expression of exactly the same form (barring $\tilde{g}$ instead of  $g$) as (\ref{eq: est m3}). 
If  $\tilde{g}(t)\not\to\pm 1$, $|p|\sim\sqrt{\xi}$. If $\tilde{g}(t)\to\pm 1$, the coefficient at $\xi$ slows down the growth, and $p^2$ has to increase slower than $\xi$ as well.
\end{rmk}

Knowing this, we can demonstrate the following:

\begin{lemma}\label{lemma:finite time}
Collisions happen in finite time, i.e. if $q\to 0$ ($\mathbf{q}_1 - \mathbf{q}_2\to0$), \ $t\to t^{\ast}<+\infty$.
\end{lemma}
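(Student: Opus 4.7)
The plan is to adapt the classical Lagrange--Jacobi argument, working with the quantity $I=\langle \bq_1-\bq_2,\bq_1-\bq_2\rangle=2-2\cos q$ already introduced. Since $I\to 0$ along a collision trajectory, the strategy is to derive two incompatible asymptotic bounds on $\dot I$ and $\ddot I$, one forcing $\dot I\to 0$ and the other forcing $\dot I\to+\infty$ unless $t^*<+\infty$.

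For the upper bound on $|\dot I|$, I would expand $\dot I=2\langle \bq_1-\bq_2,\dot{\bq}_1-\dot{\bq}_2\rangle$ and apply Cauchy--Schwarz together with energy conservation: since the kinetic energy $T=h+\cot q$ equals $\tfrac12(|\dot{\bq}_1|^{2}+|\dot{\bq}_2|^{2})$ (here $\mu_1=\mu_2=1$), one gets $|\dot I|^{2}\leq 4I\,|\dot{\bq}_1-\dot{\bq}_2|^{2}\leq 8I(|\dot{\bq}_1|^{2}+|\dot{\bq}_2|^{2})=16I(h+\cot q)$. As $q\to 0^{+}$ we have $I\sim q^{2}$ and $\cot q\sim 1/q\sim I^{-1/2}$, so the right-hand side is $\preccurlyeq\sqrt{I}$; this yields $|\dot I|\leq C_{1}\,I^{1/4}$ on a suitable left neighbourhood of $t^{*}$, and in particular $\dot I(t)\to 0$.

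For the lower bound on $\ddot I$, I differentiate once more: $\ddot I=2|\dot{\bq}_1-\dot{\bq}_2|^{2}+2\langle \bq_1-\bq_2,\ddot{\bq}_1-\ddot{\bq}_2\rangle$. Computing $V_{\bq_i}$ for $V=-\cot q$ by the chain rule (using $\cos q=\langle \bq_1,\bq_2\rangle$) gives $V_{\bq_1}-V_{\bq_2}=(\bq_1-\bq_2)/\sin^{3}q$, so the equations \eqref{eq:general one} produce $\ddot{\bq}_1-\ddot{\bq}_2=(\bq_1-\bq_2)/\sin^{3}q-|\dot{\bq}_1|^{2}\bq_1+|\dot{\bq}_2|^{2}\bq_2$. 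Pairing with $\bq_1-\bq_2$, using $\langle \bq_1,\bq_1-\bq_2\rangle=1-\cos q=I/2=-\langle \bq_2,\bq_1-\bq_2\rangle$ and energy conservation, I obtain the Lagrange--Jacobi-type identity $\ddot I=2|\dot{\bq}_1-\dot{\bq}_2|^{2}+2I/\sin^{3}q-2(h+\cot q)I$. The first term is non-negative, and as $q\to 0$ one has $2I/\sin^{3}q\sim 2/q\sim 2I^{-1/2}\to+\infty$, while the correction $2(h+\cot q)I\sim 2q\to 0$. Hence there exists $K>0$ and a left neighbourhood of $t^{*}$ on which $\ddot I\geq K\,I^{-1/2}$.

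Now argue by contradiction: suppose $t^{*}=+\infty$. Integrating $\ddot I\geq K\,I^{-1/2}$ from some $t_{1}$ onwards yields $\dot I(t)\geq \dot I(t_{1})+K\int_{t_{1}}^{t}I(s)^{-1/2}\,ds$, and since $I(s)\to 0$ the integrand is eventually bounded below by $1$, so the integral diverges as $t\to+\infty$ and $\dot I(t)\to+\infty$. This contradicts $|\dot I|\leq C_{1}\,I^{1/4}\to 0$, so $t^{*}<+\infty$. The step I expect to be the main obstacle is the careful bookkeeping in the Lagrange--Jacobi identity, in particular correctly handling the constraint contributions $-|\dot{\bq}_i|^{2}\bq_i$ coming from $|\bq_i|=1$, and checking that the positive term $2I/\sin^{3}q$ genuinely dominates (and does not cancel with) the correction $2(h+\cot q)I$ in the $q\to 0^{+}$ limit.
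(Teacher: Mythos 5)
There is a genuine gap, and it sits exactly at the step you flagged: the sign (and the projection factor) of the force term in your Lagrange--Jacobi identity is wrong. For the attractive potential $V=-\cot q$ the constrained equations on the sphere are $\ddot{\bq}_1=\frac{\bq_2-\cos q\,\bq_1}{\sin^{3}q}-|\dot{\bq}_1|^{2}\bq_1$ and symmetrically for $\bq_2$ (this is the dynamics that the reduced system \eqref{eq: two equal masses} encodes: near collision $\dot p<0$, $p\to-\infty$); you instead took \eqref{eq:general one} literally with $+V_{\bq_1}$ and the unprojected gradient, which is the repulsive system. With the correct force one gets $\langle \bq_1-\bq_2,\ddot{\bq}_1-\ddot{\bq}_2\rangle=-\tfrac{(1+\cos q)}{\sin^{3}q}\,I-(h+\cot q)I$, hence
\begin{equation*}
\ddot I \;=\; 2|\dot{\bq}_1-\dot{\bq}_2|^{2}\;-\;\frac{2(1+\cos q)}{\sin^{3}q}\,I\;-\;2(h+\cot q)\,I ,
\end{equation*}
so the cotangent term is \emph{negative} and of size $\approx 4\cot q=4\xi$, not $+2I^{-1/2}$. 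Your claimed bound $\ddot I\geq K I^{-1/2}$ therefore does not follow from the potential term; under attraction that term works against you, which is also why the conclusion cannot be this cheap (a purely potential-driven argument would "prove" $\ddot I\to+\infty$ even for data that never collide).

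With the correct identity, positivity of $\ddot I$ must come from the kinetic term, and this is the nontrivial content: Cauchy--Schwarz and energy conservation only give $2|\dot{\bq}_1-\dot{\bq}_2|^{2}\leq 8(h+\cot q)\approx 8\xi$, which is of the same order as the negative term $\approx-4\xi$, so one must show the relative velocity is asymptotically close to this maximal value (i.e.\ the rotational part of the motion is subdominant). This is precisely what the paper's proof supplies: writing $\ddot I=2\cos q\,\dot q^{2}+2\sin q\,\ddot q$ in reduced variables (\eqref{eq: I ddot}) and eliminating $8\left(p-\tfrac{m_1}{2}\right)^{2}+8m_3^{2}\xi^{2}$ via the energy--Casimir level-set relation \eqref{eq: estimate p} gives $\ddot I=4\xi+\overline{O}$ in \eqref{eq: more on I ddot} (using also boundedness of the $m_i$ and $|p|\preccurlyeq\sqrt{\xi}$ for the error terms). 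Your upper bound $|\dot I|\preccurlyeq I^{1/4}$ and the final integration/contradiction are fine and mirror the paper's closing step, but to repair the proof you would need a lower bound of the type $2|\dot{\bq}_1-\dot{\bq}_2|^{2}\geq(4+\delta)(h+\cot q)$ near collision, which in practice amounts to the reduced-variable estimates the paper develops rather than a shortcut around them.
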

\begin{proof}

We introduce the quantity $I:=\left<\bq_1-\bq_2,\bq_1-\bq_2\right>$.  Evidently, $I\to 0 $ on collision trajectories. 

From applying the theorem of sines to the isosceles triangle with the base $\sqrt{I}$ and two sides of length 1 with separating angle $q$,  $I = 4\sin^2\left(\frac{q}{2}\right)$. Therefore, $\dot{I} = 2\sin(q)\dot{q}$ and $\ddot{I} = 2\cos(q)\left(\dot{q}\right)^2 + 2\sin(q)\ddot{q}$. Substituting the expressions for $\ddot{q}$ and $\dot{q}$ from (\ref{eq: two equal masses}), we get
\begin{equation}
    \label{eq: I ddot}
    \begin{split}
        \ddot{I} &= 2\cos(q)\left(\dot{q}\right)^2 + 2\sin(q)(2\dot{p}-\dot{m}_1) = 2\cos(q)(2p-m_1)^2 -\frac{4}{\sin(q)}\left(1 +
         m_2m_3-2m_3^2\cot(q)\right) -\\&- 2\cos(q)\left(-m_2^2 + m_3^2 + 2m_2m_3\cot(q)\right)
    \end{split}
\end{equation}
In the expression above, we can replace $\cot(q)$ with $\xi$. Representing $q$ as $\mathrm{arccot}(\xi)$ and taking the Taylor series at $\xi=+\infty$ give  $\cos(q) = 1 - \overline{O}\left(\frac{1}{\xi^2}\right)$ and $\frac{1}{\sin(q)} = \xi + \overline{O}\left(\frac{1}{\xi}\right)$. Recalling Remark \ref{st: behaviour of p} and boundedness of $m_i$, we may write
\begin{equation}
    \label{eq: more on I ddot}
    \begin{split}
    (\ref{eq: I ddot})&= 2(2p-m_1)^2 - 4\xi\left(1 + m_2m_3 - 2m_3^2\xi\right) - 4m_2m_3\xi + \overline{O} = \\&= 8\left(p-\frac{m_1}{2}\right)^2   + 8m_3^2\xi^2 - 4\xi(1 + m_2m_3) - 4m_2m_3\xi + \overline{O} =\\&= 8h-4C + 4\xi(1 + m_2m_3) - 4m_2m_3\xi + \overline{O} = 4\xi + \overline{O}\to+\infty
    \end{split}
\end{equation}

Therefore, $\ddot{I} = 4\xi + \overline{O}\to+\infty$. For a contradiction, we assume that the collision time is infinite; on the other hand, we can find some $t_0$ such that $\ddot{I}(t)>K>>0$ for $t>t_0$. Thus, $I(t)>\frac{K}{2}t^2 
+ Mt + L$ for some constants $M,L$, when $t$ tends to collision time, contradicting $I\to 0$ if $t^*=+\infty.$

\end{proof}

We explore further the behavior of the dynamics variables along collision solutions:
\begin{proposition}
\label{st: behaviour of p}
As $t\to t^{\ast}$, $\dot{q}\to-\infty$ and $p\to-\infty$.
\end{proposition}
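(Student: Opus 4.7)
The plan is to reduce both assertions to showing $p \to -\infty$; the claim about $\dot q$ then follows from $\dot q = 2p - m_1$ and the boundedness of $m_1$ on the Casimir sphere $\mathcal{C}=C$.

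First I would pin down the sign of $\dot q$ in a left-neighbourhood of the collision time $t^*$. From the calculation in Lemma \ref{lemma:finite time} we have $\ddot I \sim 4\xi + \overline{O} \to +\infty$, so $\dot I$ is eventually monotonically increasing. On the other hand, Remark \ref{st:remark about p} gives $|p| \preccurlyeq \sqrt{\xi}$, so $|\dot q| = |2p - m_1| \preccurlyeq \sqrt{\xi}$, and since $\sin q \sim 1/\xi$ we obtain $|\dot I| = 2\sin(q)|\dot q| \preccurlyeq 1/\sqrt{\xi} \to 0$. A continuous, eventually increasing function tending to $0$ must lie in $(-\infty,0]$ in a left-neighbourhood of $t^*$; since $\sin q > 0$ there, we conclude $\dot q \le 0$ eventually, i.e.\ $p \le m_1/2$.

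Next I would rewrite the Hamiltonian constraint (\ref{eq: estimate p}) in the equivalent form
\[
\left(p - \tfrac{m_1}{2}\right)^{\!2} \;=\; \xi\bigl(1 + m_2 m_3 - m_3^2 \xi\bigr) + O(1).
\]
Because $m_3 \to 0$ forces $m_2 m_3 \to 0$, the right-hand side is asymptotic to $\xi(1 - m_3^2\xi) + O(1)$. As soon as $\limsup m_3^2 \xi < 1$, this grows like $\xi$, so $|p - m_1/2| \succcurlyeq \sqrt{\xi} \to +\infty$; combined with the sign $p \le m_1/2$ from the first step and boundedness of $m_1$, we get $p \to -\infty$ and hence $\dot q = 2p - m_1 \to -\infty$.

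The main obstacle is therefore ruling out the degenerate case $m_3^2\xi \to 1$ flagged in Remark \ref{st:remark about p}, in which the Hamiltonian estimate collapses. I plan to dispose of it by a dynamical contradiction: along any sequence $t_k \nearrow t^*$ with $m_3^2(t_k)\xi(t_k) \to 1$, the previous paragraph forces $p(t_k)$ bounded and $m_3(t_k) \sim \pm 1/\sqrt{\xi(t_k)}$; feeding these asymptotics into the $\dot m_1$ and $\dot m_2$ equations of (\ref{eq: two equal masses with xi}) produces leading-order terms of size $\xi^{3/2}$, which are non-integrable near $t^*$ and are incompatible with the conservation $m_1^2+m_2^2+m_3^2 \equiv C$ (assuming the nondegenerate case $C>0$). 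A potentially cleaner alternative is to observe that $\ddot I \sim 4\xi$, together with $\dot I$ having finite total variation on $[t_0,t^*)$, yields $\int_{t_0}^{t^*}\xi\,dt < \infty$; monotonicity of $\xi$ then forces $(t^*-t)\xi(t) \to 0$, hence $q(t)/(t^*-t)\to +\infty$, so the average of $|\dot q|$ on $[t,t^*]$ diverges, which—upgraded via continuity and the sign $\dot q \le 0$—gives $\dot q(t)\to -\infty$ pointwise and therefore $p \to -\infty$.
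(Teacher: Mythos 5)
Your first step (establishing $\dot q\le 0$ near $t^{\ast}$ via $\ddot I\to+\infty$, monotonicity of $\dot I$ and $\dot I\to 0$) is essentially the paper's opening move, and your reduction of everything to $p\to-\infty$ via $\dot q=2p-m_1$ is fine. You also correctly identify the crux: the Hamiltonian constraint $\left(p-\tfrac{m_1}{2}\right)^2=\xi\left(1+m_2m_3-m_3^2\xi\right)+O(1)$ only forces $|p|\succcurlyeq\sqrt{\xi}$ when $m_3^2\xi$ stays away from $1$, so the whole difficulty is the degenerate regime in which $p$ could remain bounded along some sequence $t_k\nearrow t^{\ast}$. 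Neither of your two proposed resolutions of that regime holds up. Option A extracts asymptotics of $m_3$ and $p$ only \emph{along the sequence} $t_k$ and then argues via ``non-integrable'' terms in $\dot m_1,\dot m_2$ against conservation of the Casimir; but integrability or boundedness conclusions require control of the derivatives on intervals, not at isolated times, and a function can easily have huge derivative on a sparse sequence while $m_1^2+m_2^2+m_3^2$ stays constant. Moreover your leading term $\xi^{3/2}$ carries the factor $m_2$, which may itself tend to zero along $t_k$, and $p(t_k)$ is not actually forced to be bounded when $m_3^2\xi\to1$ (e.g.\ $1-m_3^2\xi\sim\xi^{-1/2}$ still gives $p\sim-\xi^{1/4}$), so even the setup of the contradiction is not secured.

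Option B is closer to a genuinely different proof: the chain $\int_{t_0}^{t^{\ast}}\xi\,dt<\infty$ (from monotone bounded $\dot I$ and $\ddot I=4\xi+\overline{O}$), then $(t^{\ast}-t)\xi(t)\to0$ using $\dot\xi\ge0$, then divergence of the averages $\frac{1}{t^{\ast}-t}\int_t^{t^{\ast}}|\dot q|\,ds$ is correct and nicely avoids L'H\^opital. But the final step, ``upgraded via continuity and the sign $\dot q\le0$,'' is exactly where the real work lies and it is false as stated: a continuous non-positive function can have divergent averages while returning to values $\ge -N$ on a sequence of times accumulating at $t^{\ast}$ (deep narrow spikes), so average divergence gives only $\dot q(t_k)\to-\infty$ along some sequence, not the full limit that the Proposition (and the later omega-limit theorem) requires. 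Excluding precisely this oscillation is the bulk of the paper's proof: it shows that zeros of $\ddot q$ would have to accumulate at $t^{\ast}$, and at a local maximum $\bar t_n$ of $\dot q$ with $\dot q(\bar t_n)\ge N$ the identity $\ddot I=2\cos(q)\dot q^2+2\sin(q)\ddot q$ makes the right-hand side bounded while $\ddot I\to+\infty$, a contradiction. Some quantitative input of this kind (control of $\ddot q$, or of $\dot q$ at its local maxima) is indispensable; without it your argument has a genuine gap.
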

\begin{proof}
We first focus on the behavior of $\dot q$. As a start, we have that $\ddot{I}$ is a positive function when $t$ is greater than some $t_0$ sufficiently close to the collision time, as it is clear from the proof of Lemma \ref{lemma:finite time}. Therefore, $\dot{I}$ is a monotonous increasing function for corresponding values of $t$. In the light of this, since  by definition $I\geq 0$ and along collisions $I\to 0$, $\dot{I}$ has to be a non-positive function. We know that $\dot{I} =2\dot{q}\sin(q)$. Using Remark \ref{st: behaviour of p} together with the equalities
$\sin(q) = \frac{1}{\xi} + \overline{O}\left(\frac{1}{\xi^3}\right)$ (Taylor series of $\sin\bigl(\mathrm{arccot}(x)\bigr)$ at infinity is $\frac{1}{x}-\frac{1}{2x^3} + \overline{O}\left(\frac{1}{x^4}\right)$) and $\dot{q} =2p-m_1$, we can write 
\[
\dot{I} = \frac{4p-2 m_1}{\xi} + \overline{O}\left(\frac{1}{\xi^{\frac52}}\right),
\]
entailing that $\dot{I}\to 0$ on collision orbits by Remark \ref{st:remark about p}. Since $\sin(q)\ge0$, we can deduce that 
\begin{equation}\label{eq:critical}
\dot{q}\le0 \text{ in some left neighbourhood of }t^{\ast}. 
\end{equation}
Assume for a moment that $\dot{q}$ has a limit, as $t\to t^{\ast}$. Then from the equation for $\dot{I}$
\[
\lim\limits_{t\to t^{\ast}}\dot{q}  =\lim\limits_{t\to t^{\ast}}\frac{\dot{I}}{2\sin(q)}.
\]
Observe that in the expression above, the right hand side limit is of the type $\frac00$, and we can use L'H\^{o}pital's rule to estimate it:
\begin{equation}
    \label{eq: lim dot q}
\lim\limits_{t\to t^{\ast}}\dot{q} = \lim\limits_{t\to t^{\ast}}\frac{\ddot{I}}{2\cos(q)\dot{q}}
\end{equation}
 Now consider (\ref{eq: lim dot q}): if we assume that  $\dot{q}\to \dot{q}^*\in\overline{\mathbb{R}}$, that $\dot{q}^*$ can only be $\pm\infty$. Indeed,if $\dot{q}^*$ is a finite number, then the left hand side of  (\ref{eq: lim dot q}) is that number, and the right hand side is $\pm\infty$. Additionally, $\dot{q}$ can't tend to $+\infty$, since we know that it eventually becomes non-positive. Thus,  if $\dot{q}$ has a limit when $t\to t^{\ast}$, it must be $-\infty$.

Now, we need to prove that the limit exists in the first place.
\begin{figure}
    \centering
    \includegraphics[scale=.4]{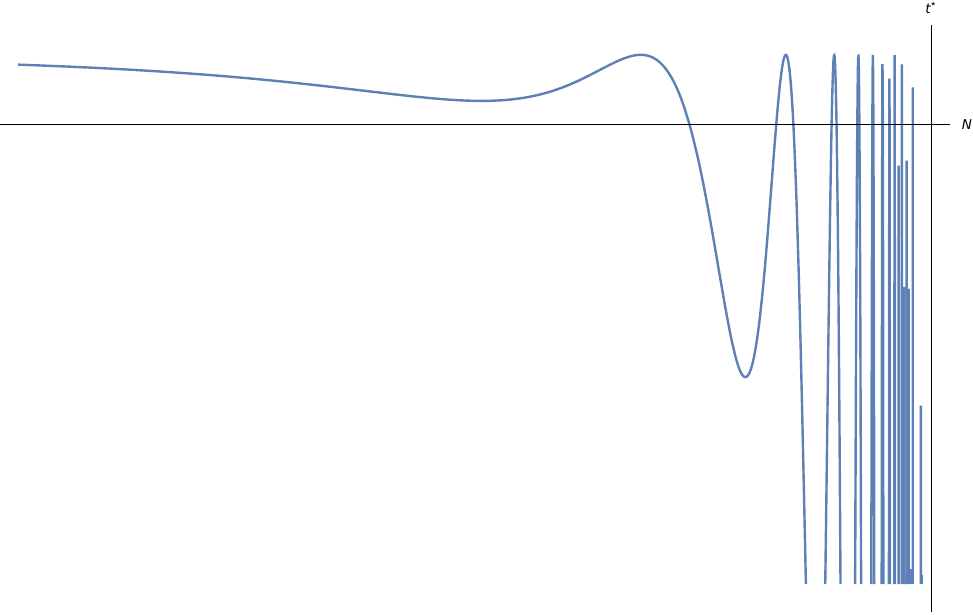}
    \caption{A rough sketch of the  assumed graph of $\dot{q}$, if it was to have no limit when $t\to t^{\ast}$. The vertical line is $t=t^{\ast}$,and the horizontal line is of the form $(N,t)$.}
    \label{fig: graph dot q}
\end{figure}
we do so in a number of steps. Assume that $\dot{q}$ \textbf{does not } have a limit as $t\to t^{\ast}$. Then the following claims hold:
\begin{enumerate}
    \item Zeros of $\ddot{q}$ accumulate to collision time.
    
    If we assume that there exists a left $\epsilon$-neighbourhood of $t^{\ast}$, such that $\ddot{q}$ does not have zeros in it, $\dot{q}$ will be a monotonic function in the neighbourhood in question and, as a consequence,  will have a limit, which, as stated above,  will  be $-\infty$ and will  contradict our current assumption that $\dot{q}$ does not have a limit. Therefore, there exists a sequence $t_n$, with $t_n\to t^{\ast}$ as $n\to\infty$, such that $\ddot{q}(t_n)=0$. 
    \item The values of $\dot{q}(t_n)\to-\infty$ when $n\to \infty$.
    
    Recall the equation for $\ddot{I}$:
    \begin{equation}
\label{eq: ddot I}
\ddot{I} = 2\cos(q)(\dot{q})^2 + 2\sin(q)\ddot{q}\to+\infty,
\end{equation}
by the proof of Lemma \ref{lemma:finite time}.
At $t_n$ the function  $\ddot{q}$ turns zero, and therefore $\left(\dot{q}(t_n)\right)^2 = \frac{\ddot{I}(t_n)}{\cos\left(q(t_n)\right)}$. Since $\ddot{I}(t_n)\to +\infty$ as $n\to +\infty$ (see Lemma \ref{lemma:finite time}) and $\cos\left(q(t_n)\right)\to 1$, we get that $\dot{q}(t_n)\to-\infty$, since $\dot{q}$ is a non-positive function in some neighbourhood of $t^{\ast}$.

    \item There exists a non-positive number $N$, such that the points with $\dot{q}\ge N$ accumulate to collision time.
    
    Since we have assumed that the limit for $t\to t^*$ of $\dot{q}(t)$ does not exist, in particular $\dot{q}(t)$ does not tend to $-\infty$. Therefore, there exists a (non-positive) number $N$, such that every left $\epsilon-$ neighbourhood of $t^{\ast}$ contains a point $\tilde{t}_{\epsilon}$, such that $\dot{q}(\tilde{t}_{\epsilon})\ge N$. Hence, there exists a sequence $\tilde{t}_n$, such that $\tilde{t}_n\to t^{\ast}$ and $\forall \ n \ \dot{q}(\tilde{t}_n)\ge N$.

\end{enumerate}
From the three observations above, we may conclude that the graph of $\dot{q}$ must look approximately as the one in Figure \ref{fig: graph dot q}: an oscillating function with ever decreasing minima, intersecting the horizontal line $N$ in every neighbourhood of $t^{\ast}$. 

Since both $t_n$ and $\tilde{t}_n$ converge to $t^{\ast}$, we can find  a subsequence  $\bar{t}_n$ of $t_n$, such that $\bar{t}_n\to t^{\ast}$,  $\ddot{q}(\bar{t}_n) = 0$ and $\dot{q}(\bar{t}_n)\ge N$ (effectively, we are taking a subsequence of local maxima of $\dot{q}$). At this point it is crucial to recall \eqref{eq:critical} that shows that $\dot{q}^2(\bar{t}_n)$ is bounded, since $N\leq \dot{q}(\bar{t}_n)\leq 0$. From here we get that on the points  $\bar{t}_n$ the right hand side  of (\ref{eq: ddot I}) is bounded, while the left hand side tends to $+\infty$, which leads to a contradiction. 

Therefore, $\dot{q}$ must have a limit with $t\to t^{\ast}$, and this limit can only be $-\infty$; moreover, since $m_1$ is a bounded function, $p$ must tend to $-\infty$ as well, since $\dot{q}= 2p-m_1$ (see the penultimate equation of the system (\ref{eq: two equal masses})) and $m_1$ is  a bounded function. 
\end{proof}

We can estimate how fast $p$ goes to $-\infty$:
\begin{lemma} $p\sim -\sqrt{\xi}$. \end{lemma}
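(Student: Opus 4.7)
The plan is to argue by contradiction. Since we already know $|p|\preccurlyeq\sqrt{\xi}$ from Remark \ref{st:remark about p} and $p\to-\infty$ from Proposition \ref{st: behaviour of p}, establishing $p\sim-\sqrt{\xi}$ amounts to ruling out $p/\sqrt{\xi}\to 0$; this is precisely the offending sub-case flagged in Remark \ref{st:remark about p}. I would therefore assume that $p/\sqrt{\xi}\to 0$ along a collision trajectory and derive a contradiction.

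First I would substitute $p=o(\sqrt{\xi})$ into the conserved-quantity identity (\ref{eq: estimate p}). The term $2(p-m_1/2)^2$ becomes $o(\xi)$, whereas the right-hand side stays bounded, forcing $\bigl(\sqrt{2}\,m_3\xi-m_2/\sqrt{2}\bigr)^2=2\xi\bigl(1+o(1)\bigr)$ and hence $|m_3\sqrt{\xi}|\to 1$. Because $m_3(t)\sqrt{\xi(t)}$ is continuous and its absolute value is eventually close to $1$, it cannot oscillate between neighbourhoods of $+1$ and $-1$ without passing through $0$; so $m_3\sqrt{\xi}\to\epsilon$ for some definite sign $\epsilon\in\{+1,-1\}$.

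Next, I would write $m_3\sqrt{\xi}=\epsilon+\alpha(t)$ with $\alpha(t)\to 0$, differentiate, and use $\dot\xi=-(\xi^2+1)(2p-m_1)$ to express $\dot m_3$ in terms of $\dot\alpha$, $p$ and $\xi$; equating this with the ODE $\dot m_3=m_2 p-m_1 m_3\xi$ from system (\ref{eq: two equal masses with xi}) and solving for $\dot\alpha$ gives, after rearrangement,
\[
\dot\alpha=-(\epsilon+\alpha)\,\xi\!\left(p+\tfrac{m_1}{2}\right)+m_2\,p\sqrt{\xi}+o(\sqrt{\xi}).
\]
Under the standing assumption $p/\sqrt{\xi}\to 0$ combined with $p\to-\infty$, the dominant term on the right is $-\epsilon\,p\xi$, so $|\dot\alpha|\sim|p\xi|$, blowing up with a definite sign as $t\to t^{\ast}$.

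The concluding step is an integrability argument. For large $\xi$, the equation for $\xi$ gives $\dot\xi\sim-2p\xi^2$, hence $|p\xi|\sim \dot\xi/(2\xi)$, whose integral from any $t_0$ close to collision up to $t^{\ast}$ equals $\tfrac12\ln\xi(t)\big|_{t_0}^{t^{\ast}}=+\infty$. Thus $\int_{t_0}^{t^{\ast}}\dot\alpha\,dt$ diverges to $\pm\infty$, contradicting the finite telescoping difference $\alpha(t^{\ast})-\alpha(t_0)=-\alpha(t_0)$ guaranteed by $\alpha(t)\to 0$. The main obstacle will be the third step: one has to carry out the leading-order matching carefully and verify that the subleading contributions (notably the $m_1\xi/2$ piece buried in the first term and the $m_2 p\sqrt{\xi}$ term) are genuinely of lower order than $p\xi$ under the hypothesis $p=o(\sqrt{\xi})$, so that the sign and size of $\dot\alpha$ are effectively dictated by $-\epsilon p\xi$ alone and the non-integrability argument goes through.
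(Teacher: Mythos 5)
Your proposal is correct, but it follows a genuinely different route from the paper's. The paper works through the auxiliary quantity $I=\left<\bq_1-\bq_2,\bq_1-\bq_2\right>$: from $\ddot I=2\cos(q)\dot q^2+2\sin(q)\ddot q$ and $\ddot I=4\xi+\overline{O}$ it reduces the claim to showing $\dot q/\sqrt{\ddot I}\not\to 0$, which it proves by a case analysis on whether zeros of $\ddot q$ accumulate at $t^*$ (at such zeros $\dot q^2=\ddot I/(2\cos q)$; if there are none, monotonicity and the sign of $\ddot q$ give the lower bound), and then transfers the conclusion from $\dot q$ to $p$ and from $\sqrt{\ddot I}$ to $\sqrt{\xi}$ via the comparison rules of Lemma \ref{st: lemma about the relations}. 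You instead argue by contradiction directly in the reduced variables: assuming $p/\sqrt{\xi}\to 0$ (which, given $|p|\preccurlyeq\sqrt{\xi}$ from Remark \ref{st:remark about p} and $p\to-\infty$ from Proposition \ref{st: behaviour of p}, is indeed the only way $p\sim-\sqrt{\xi}$ can fail), the level-set identity \eqref{eq: estimate p} forces $m_3\sqrt{\xi}\to\epsilon\in\{\pm1\}$; writing $m_3\sqrt{\xi}=\epsilon+\alpha$ and differentiating with the equations for $\dot m_3$ and $\dot\xi$ gives $\dot\alpha=-\epsilon\, p\,\xi\,(1+o(1))$, of eventually fixed sign, while $\int^{t^*}|p|\xi\,\mathrm{d}t$ diverges because $|p|\xi$ is comparable to $\dot\xi/\xi$ whose integral is $\tfrac12\ln\xi\to+\infty$; this contradicts the convergence of $\alpha$ to $0$. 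I checked the leading-order matching you flag as the main obstacle: the discarded terms are of size $|p|\sqrt{\xi}$, $\xi$ and $|p|/\xi$, all $o(|p|\xi)$ since $|p|\to\infty$ and $\xi\to\infty$, so the sign and size of $\dot\alpha$ are indeed dictated by $-\epsilon p\xi$ and the non-integrability argument closes. Your route avoids the quantity $I$, the second-derivative case analysis, and the delicate bookkeeping with the non-transitive relation $\sim$, at the cost of relying on the same prior inputs plus the exact form of the $\dot m_3$ equation; the paper's argument is more geometric (it runs through the mutual distance and its convexity near collision), whereas yours is a cleaner self-contained ``the slow variable cannot stabilise at the wrong value'' argument that, as a by-product, exhibits exactly what would have to happen ($m_3\sqrt{\xi}\to\pm1$) in the excluded regime.
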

\begin{proof}

In order to simplify our proof, we make a couple of observations:
\begin{enumerate}
    \item $\dot{q}\sim p$ and $p\sim\dot{q}$. This stems from the fact that $\dot{q} = 2p-m_1$, and therefore $\frac{\dot{q}}{p}\to2$ as $t\to t^{\ast}$ - these are exactly the conditions of entry 2 from Lemma \ref{st: lemma about the relations}.
    \item Similarly, $\ddot{I}\sim \xi$ and $\xi\sim\ddot{I}$ (as well as $\sqrt{\xi}\sim\sqrt{\ddot{I}}$ and $\sqrt{\ddot{I}}\sim\sqrt{\xi}$). We know from the proof of  Proposition \ref{st: behaviour of p} that $\ddot{I} = 4\xi + \overline{O}$. Hence, $\lim\limits_{t\to t^{\ast}}\frac{\ddot{I}}{\xi}= 4$, and the  statement immediately follows as in the instance above. 
    \item From the transitivity of $\preccurlyeq$ and Remark \ref{st: behaviour of p}, $\dot{q}\preccurlyeq-\sqrt{\xi}$ ( $|p|\preccurlyeq\sqrt{\xi}$ together with $p\to-\infty$ yield $p\preccurlyeq-\sqrt{\xi}$ and we know that $\dot{q}\sim p$ ).
    \item Using the transitivity of $\preccurlyeq$  together with entry 2, we get $\dot{q}\preccurlyeq-\sqrt{\ddot{I}}$.
    
    \item If we demonstrate that $\dot{q}\sim -\sqrt{\ddot{I}}$, the third entry of Lemma \ref{st: lemma about the relations} will allow us to conclude that $\dot{q}\sim-\sqrt{\xi}$ (the ratio of $\sqrt{\ddot{I}}$ and $\sqrt{\xi}$ tends to 2). 
    \item Third part of Lemma \ref{st: lemma about the relations} implies that if $\dot{q}\sim-\sqrt{\xi}$, then so is $p$ (as mentioned above, $\frac{p}{\dot{q}}\to\frac12$).
\end{enumerate}

This means that in order  to prove our initial claim, since we already know that $\dot{q} \preccurlyeq -\sqrt{\ddot{I}}$ it is enough to demonstrate that $\frac{\dot{q}}{\sqrt{\ddot{I}}}\not\to 0$.

 To this end, we observe that two separate cases can occur for $\ddot{q}$: it either has zeros accumulating to $t^{\ast}$, or there exists a value of $\epsilon>0$ such that $B_{\epsilon}(t^{\ast})$ does not have zeros of $\ddot{q}$ in it.

Assume that the latter is true; this makes $\dot{q}$ a monotonic function in some neighbourhood of $t^{\ast}$. Since we know that $\dot{q}\to-\infty$, $\ddot{q}$ must be negative. We consider the equation (\ref{eq: ddot I}) once again:

$$
\left(\dot{q}\right)^2 = \frac{\ddot{I} -2\sin(q)\ddot{q}}{2\cos(q)}.
$$
As we have established, $\ddot{q}$ is negative under the current assumptions and $\sin(q)$ is positive, and after division by $\ddot{I}$ of the expression above  we have
\[
\frac{\left(\dot{q}\right)^2}{\ddot{I}} = \frac{1}{2\cos(q)}  - \tan(q)\frac{\ddot{q}}{\ddot{I}}> \frac{1}{2}-\epsilon
\]
for some constant $\epsilon$ and in a sufficiently small neighbourhood of $t^{\ast}$; with this, $\frac{\dot{q}}{\sqrt{\ddot{I}}}\not\to 0$, and we are done.


Observe that the same logic as above applies when $\ddot{q}$ is just  non-positive.

Now consider the first option, and let zeros of $\ddot{q}$ accumulate to $t^{\ast}$ and, in the light of the last remark, and allow $\ddot{q}$ to take positive values in every neighbourhood of $t^{\ast}$. Then at the zeros of $\ddot{q}$ the following holds (we denote an arbitrary zero by $\tilde{t})$:
$$
4\xi + \overline{O} =\ddot{I}(\tilde{t}) = 2\cos(q(\tilde{t}))\left(\dot{q}(\tilde{t})\right)^2
$$
Therefore, at these points,  $\left(\dot{q}\right)^2$ coincides with the function $\frac{\ddot{I}}{2\cos(q)}$.

Suppose  that the zeros of $\ddot{q}$ form the sequence $\{t_n\}$, with $t_n\to t^{\ast}$ as $n\to \infty$; then on the elements of said sequence  $\frac{\left(\dot{q}\right)^2(t_n)}{\ddot{I}(t_n)} = \frac{1}{2\cos(t_n)}\to \frac12$, a sequence of points where the values of the ratio are separated from 0.  Thus,  in this case $\frac{\dot{q}}{\sqrt{\ddot{I}}}\not\to 0$ either, completing the  proof.
\end{proof}
\begin{lemma}
\label{st: estim on p}
If we represent $p$ as $p= g\sqrt{\xi}$, the function $g(t)$ is bounded away from 0 in some left $\epsilon$-neighbourhood of $t^{\ast}$. 
\end{lemma}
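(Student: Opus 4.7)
The plan is to argue by contradiction. By the previous lemma $p\sim-\sqrt{\xi}$, so $g := p/\sqrt{\xi}$ is bounded and does not tend to $0$; moreover, $p\to-\infty$ forces $g<0$ in some left neighbourhood of $t^{\ast}$. Suppose $g$ were \emph{not} bounded away from $0$. Then $\limsup_{t\to t^{\ast}} g=0$ while $\liminf_{t\to t^{\ast}} g<0$, and smoothness of $g$ combined with this mismatch produces a sequence of local maxima $\tau_n\nearrow t^{\ast}$ with $g(\tau_n)\to 0^-$ and $\dot g(\tau_n)=0$. The goal is to show that on the contrary $\dot g(\tau_n)\to+\infty$ at rate $\xi(\tau_n)^{3/2}$, which is the desired contradiction.

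\medskip

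To carry this out, I first rewrite \eqref{eq: estimate p} using the identity $(\sqrt{2}\,m_3\xi-m_2/\sqrt 2)^2=2(m_3\xi-m_2/2)^2$, which puts the conservation law in the symmetric form $(p-m_1/2)^2+(m_3\xi-m_2/2)^2 = \xi + \overline{O}$. Dividing through by $\xi$ and using that $m_1,m_2$ are bounded and $m_3\to 0$ yields the clean asymptotic relation $g^2+m_3^2\xi=1+o(1)$ along the collision orbit. Next, I differentiate $g=p\,\xi^{-1/2}$ and substitute the equations of motion from \eqref{eq: two equal masses with xi}. The dominant contributions are
\[
\frac{\dot p}{\sqrt\xi} \ \sim\ -\xi^{3/2}\bigl(1+m_2m_3-2m_3^2\xi\bigr) \qquad\text{and}\qquad \frac{p\,\dot\xi}{2\xi^{3/2}} \ \sim\ -g^2\xi^{3/2},
\]
which combine to
\[
\dot g \ =\ \xi^{3/2}\bigl(g^2-1-m_2m_3+2m_3^2\xi\bigr) \,+\, o(\xi^{3/2}).
\]
Inserting the conservation identity from the previous step simplifies the bracket to $m_3^2\xi+o(1)$, giving $\dot g \sim\xi^{3/2}\bigl(m_3^2\xi+o(1)\bigr)$. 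Along the subsequence $\tau_n$ where $g(\tau_n)\to 0$, the identity forces $m_3^2(\tau_n)\xi(\tau_n)\to 1$, whence $\dot g(\tau_n)\sim\xi(\tau_n)^{3/2}\to+\infty$, contradicting $\dot g(\tau_n)=0$.

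\medskip

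The principal obstacle will be checking rigorously that the remainder terms dropped in the expansion of $\dot g$ are genuinely $o(\xi^{3/2})$; this amounts to a delicate bookkeeping exercise relying on the already established bounds $|p|\preccurlyeq\sqrt\xi$ of Remark \ref{st:remark about p} and $m_3\preccurlyeq\xi^{-1/2}$. A secondary subtlety is the construction of the local-maximum sequence $\tau_n$: letting $\ell:=\liminf_{t\to t^{\ast}} g<0$, one picks interleaved sequences $\alpha_n<\beta_n<\gamma_n$ tending to $t^{\ast}$ with $g(\alpha_n),g(\gamma_n)<\ell/2$ and $g(\beta_n)\to 0^-$; continuity then supplies an interior maximum of $g$ on $[\alpha_n,\gamma_n]$ whose value is at least $g(\beta_n)$, and hence tends to $0$.
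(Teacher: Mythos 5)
Your proposal is correct and follows essentially the same route as the paper's proof: both extract a sequence of interior local maxima of $g$ accumulating at $t^{\ast}$ with $g\to 0$, use the common level set of $\mathcal{H}$ and $\mathcal{C}$ to force $m_3^2\xi\to 1$ along that sequence, and reach a contradiction by combining the equation for $\dot p$ with the differentiation of $p=g\sqrt{\xi}$ at the critical points. The only cosmetic difference is that you phrase the contradiction as $\dot g(\tau_n)\sim \xi^{3/2}\to+\infty$ against $\dot g(\tau_n)=0$, whereas the paper compares the two resulting signs of $\dot p(\bar t_n)$; the remainder bookkeeping you flag does go through using the already established bounds $|p|\preccurlyeq\sqrt{\xi}$ and $m_3\preccurlyeq\xi^{-\frac12}$.
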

\begin{proof}

Firstly, we observe that $g(t)<0$: since $p$ is a negative function and $p\to-\infty$, there exists such a left neighbourhood of $t^{\ast}$ such   that $g$ doesn't have zeros in it.

The function $g$ can be represented as $g = \frac{p}{\sqrt{\xi}}$; both $p$ and $\xi$ are differentiable functions of time, and $\xi$ does not turn zero in some neighbourhood of $t^{\ast}$. Therefore, $g$ is a differentiable function in that same neighbourhood.

Now, let us assume the opposite of the statement of the lemma: i.e. that exists a sequence $\{t_n\}$, such that $g(t_n)\to 0$ and $t_n\to t^{\ast}$, as $n\to+\infty$. On the other hand, $g\not\to 0$ when $t\to t^{\ast}$; therefore, $g$ must assume a value less than some negative number $N$ in any neighbourhood of $t^{\ast}$.

We claim that these conditions entail existence of  a sequence of local maxima of $g(t)$, $\{\bar{t}_n\}$, such that $\dot{g}(\bar{t}_n)=0$, $\bar{t}_n\to t^{\ast}$ and $g(\bar{t}_n)\to 0$  when $n\to+\infty$. 

To see this, we observe that there exists a sequence $\{\tilde{t}_n\}$ interlaced with $\{t_n\}$ (that is, $t_{n-1}<\tilde{t}_n<t_n$ for all $n$), such that $g(\tilde{t}_n)<N$. Consider an interval $[\tilde{t}_n, \tilde{t}_{n+1}]$. On  both ends of  it the value of $g$ is less than $N$ and  inside it $g$ assumes a  value strictly bigger than $N$  -- at least at the point ${t}_n$. 
Therefore, $g$ must attain its maximum at some point $\bar{t}_n$  inside the interval in question. Consequently, $\dot{g}(\bar{t}_n)=0$ and $g(\bar{t}_n)\ge g(\tilde{t}_n)$. Taking such points $\bar{t}_n$ for all values of $n$, we construct the required sequence.

 Returning to our proof, we recall that since  $m_3\preccurlyeq \xi^{-\frac12}$, we can represent it as $m_3 = \gamma\xi^{-\frac12}$, with $\gamma$ a bounded function in some neighbourhood of $t^{\ast}$ (possibly tending to 0).

We substitute both the expressions for $m_3$ and $p$ into (\ref{eq:Ham and Cas common level set}), to get
\[
2\xi(g^2+\gamma^2-1) - 2\sqrt{\xi}(m_1g+m_2\gamma) = 2h-C.
\]

Evaluating the expression above at the points of the sequence  $\{\bar{t}_n\}$  allows us to conclude that $\gamma^2(\bar{t}_n)\to 1$, since the coefficient at $\xi$ must tend to 0 in order for the left hand side to be bounded ($m_1g + m_2\gamma$ is a bounded function). Evaluating the last equation of \eqref{eq: two equal masses with xi}, along the sequence $\{\bar{t}_n\}$ we get:
\begin{equation}\label{eq:contra3}
\dot{p}(\bar{t}_n) = -\left(\xi^2(\bar{t}_n)+1\right)\left(1+m_2(\bar{t}_n)m_3(\bar{t}_n)-2\gamma^2(\bar{t}_n)\right)>0,
\end{equation}
for sufficiently large $n$, since $1+m_2(\bar{t}_n) m_3(\bar{t}_n) - 2\gamma^2(\bar{t}_n))\to -1$, due to $m_2$ being a bounded function and $m_3$ tending to zero.

We can observe that on the other hand, differentiating $p=g\sqrt{\xi}$ gives:
\[
\dot{p} = \dot{g}\sqrt{\xi} + \frac{g}{2\sqrt{\xi}}\dot{\xi} = \dot{g}\sqrt{\xi}  -\frac{g}{2\xi}(\xi^2+1)(2p-m_1).
\]
Since $\dot{g}(\bar{t}_n)=0$ by our choice of $\{\bar{t}_n\}$, 
\[
\dot{p}(\bar{t}_n) =-\frac{g}{2\sqrt{\xi}}(\xi^2+1)(2p-m_1)<0,
\]
seeing as  $g<0$ and $p\to-\infty$, but this contradicts \eqref{eq:contra3}. Therefore, $g$ has to be bounded away from 0. 
\end{proof}

Armed with this,  we can estimate the behaviour of $m_3$. Firstly, we prove the following crucial 
\begin{lemma}\label{lemma:xiastime}
The function $\xi$ is a strictly monotonous function of time in some neighbourhood of $t^{\ast}$. 
\end{lemma}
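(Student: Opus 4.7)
The plan is to read off the sign of $\dot{\xi}$ directly from the evolution equation for $\xi$ in the system~\eqref{eq: two equal masses with xi}, combined with the asymptotic behavior of $p$ established in Proposition~\ref{st: behaviour of p}.

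Recall from~\eqref{eq: two equal masses with xi} that
\[
\dot{\xi} \;=\; -(\xi^2+1)(2p-m_1).
\]
The factor $\xi^2+1$ is strictly positive everywhere, so the sign of $\dot{\xi}$ is determined by the sign of $-(2p-m_1) = m_1 - 2p$. By Proposition~\ref{st: behaviour of p} we know $p \to -\infty$ as $t \to t^{\ast}$; on the other hand, since $\mathcal{C} = m_1^2+m_2^2+m_3^2$ is conserved, $m_1$ is bounded in absolute value by $\sqrt{C}$. Consequently $m_1 - 2p \to +\infty$, and in particular there exists a left neighborhood of $t^{\ast}$ on which $m_1 - 2p > 0$.

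On that neighborhood $\dot{\xi} > 0$, hence $\xi$ is strictly increasing as a function of time, which gives the claim. The argument is entirely elementary once Proposition~\ref{st: behaviour of p} is in hand; there is no real obstacle here beyond invoking the already-proved fact that $p \to -\infty$ and noting the boundedness of $m_1$ from the Casimir.
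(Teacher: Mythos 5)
Your argument is correct and follows essentially the same route as the paper: both read the sign of $\dot{\xi} = -(\xi^2+1)(2p-m_1) = -(\xi^2+1)\dot{q}$ from the fact that $\dot q = 2p - m_1 \to -\infty$ (equivalently, $p\to-\infty$ with $m_1$ bounded), established in Proposition \ref{st: behaviour of p}. No issues.
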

\begin{proof}
Indeed, $\dot{q}\to-\infty$ and  becomes strictly negative at some point. But $\dot{\xi} = -(\xi^2 + 1)\dot{q}$ and is therefore strictly greater than 0. 
\end{proof}

\begin{proposition}
We have
\begin{equation}
m_3\preccurlyeq\frac{1}{\xi}.
\end{equation}
\end{proposition}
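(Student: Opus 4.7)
The plan is to change the independent variable from $t$ to $\xi$ (legitimate by Lemma \ref{lemma:xiastime}, since $\dot\xi>0$ in a left-neighbourhood of $t^*$) and then integrate the resulting equation for $dm_3/d\xi$ backwards from $+\infty$. On such a neighbourhood every reduced variable becomes a smooth function of $\xi$, and from system (\ref{eq: two equal masses with xi}) we have
\[
\frac{dm_3}{d\xi} \;=\; \frac{\dot m_3}{\dot\xi} \;=\; \frac{m_2 p - m_1 m_3 \xi}{-(\xi^2+1)(2p-m_1)}.
\]

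The core step is to show $|dm_3/d\xi|\preccurlyeq \xi^{-2}$. The Casimir relation $\mathcal{C}=C$ bounds $|m_1|,|m_2|$ by $\sqrt{C}$; the already established $p\sim -\sqrt\xi$ gives $|2p-m_1|\succcurlyeq \sqrt\xi$, so that $|\dot\xi|\succcurlyeq \xi^{5/2}$; and the previously obtained bound $m_3\preccurlyeq \xi^{-1/2}$ controls the second summand in the numerator, making $|m_1 m_3 \xi|\preccurlyeq \sqrt\xi$. Splitting the fraction into its two natural pieces, both $|m_2 p/\dot\xi|$ and $|m_1 m_3 \xi/\dot\xi|$ are then of order $\xi^{-2}$, hence $|dm_3/d\xi|\le K/\xi^2$ for some $K>0$ and all $\xi$ large enough.

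Combining this with $\lim_{\xi\to+\infty}m_3(\xi)=0$ (established at the beginning of the section) and the fundamental theorem of calculus gives, for every sufficiently large $\xi$,
\[
|m_3(\xi)| \;\le\; \int_\xi^{+\infty}\!\Bigl|\tfrac{dm_3}{ds}\Bigr|\,ds \;\le\; \int_\xi^{+\infty}\!\frac{K}{s^2}\,ds \;=\; \frac{K}{\xi},
\]
which is exactly $m_3\preccurlyeq \xi^{-1}$.

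The only delicate point I anticipate is the $O(\xi^{-2})$ estimate for the ratio above, and it has the flavour of a one-step bootstrap: the prior bound $m_3\preccurlyeq \xi^{-1/2}$ is fed into the right-hand side of the equation for $dm_3/d\xi$ to recover the improved bound $m_3\preccurlyeq \xi^{-1}$. Both ingredients, $p\sim -\sqrt\xi$ (which controls the growth of $\dot\xi$) and the earlier $m_3\preccurlyeq \xi^{-1/2}$ (which controls the $m_1 m_3 \xi$ term in the numerator), are genuinely needed, since without the latter one merely recovers the bound $m_3\preccurlyeq \xi^{-1/2}$ one started with. Once both are in place, the proof collapses to a short application of the fundamental theorem of calculus with $\xi$ playing the role of a time-like variable.
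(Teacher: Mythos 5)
Your overall strategy is exactly the paper's: use Lemma \ref{lemma:xiastime} to treat $m_3$ as a function of $\xi$, bound $\bigl|\mathrm{d}m_3/\mathrm{d}\xi\bigr|$ by $K/\xi^2$, and integrate from $\xi$ to $+\infty$ using $m_3\to 0$. However, there is a genuine gap in the one step you yourself flag as delicate. You justify the lower bound on the denominator by ``$p\sim-\sqrt{\xi}$ gives $|2p-m_1|\succcurlyeq\sqrt{\xi}$, so $|\dot{\xi}|\succcurlyeq\xi^{5/2}$'', and then divide the $\preccurlyeq$ estimates for the numerator by this. Under Definition \ref{def:asymptotic}, $p\sim-\sqrt{\xi}$ only means $p=-h\sqrt{\xi}$ with $h$ bounded and not tending to $0$; the paper explicitly allows $h$ to vanish along a subsequence of times approaching $t^{\ast}$. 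Hence $\succcurlyeq$ does \emph{not} furnish a pointwise bound $|2p-m_1|\ge c\sqrt{\xi}$, and dividing a $\preccurlyeq$ bound by a $\succcurlyeq$ bound is not a legitimate operation: the quotient $\dfrac{|m_1 m_3\xi|}{(\xi^2+1)|2p-m_1|}$ could be as large as $C/(h\,\xi^2)$ with $1/h$ unbounded, so the claimed $|\mathrm{d}m_3/\mathrm{d}\xi|\le K/\xi^2$ does not follow from what you cite.

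What is actually needed is the stronger statement of Lemma \ref{st: estim on p}: writing $p=g\sqrt{\xi}$, the function $g$ is bounded away from zero in a left neighbourhood of $t^{\ast}$. The paper proves this separately (by a contradiction argument along local maxima of $g$, comparing the sign of $\dot p$) precisely so that, with $m_3=\gamma\xi^{-1/2}$, the troublesome term becomes $\dfrac{m_1\gamma}{(\xi^2+1)\,(2g-m_1/\sqrt{\xi})}$ with denominator genuinely bounded below; this is the crux of the paper's proof of the proposition. Note that your first summand does not need this: $\dfrac{m_2 p}{(\xi^2+1)(2p-m_1)}=\dfrac{m_2}{(\xi^2+1)\bigl(2-m_1/p\bigr)}$ is $O(\xi^{-2})$ merely because $p\to-\infty$. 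But for the second summand the pointwise lower bound on $|p|/\sqrt{\xi}$ is indispensable, so you must invoke Lemma \ref{st: estim on p} (or reprove it); with that citation inserted, the rest of your argument goes through and coincides with the paper's.
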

\begin{proof}
Because of Lemma \ref{lemma:xiastime} we can consider $m_3$ as a function of $\xi$ instead of $t$ in a suitable left neighbourhood of $t^*$. For values $\xi_1$ and $\xi_0$ corresponding to this suitable left neighbourhood of $t^*$ we have 
\begin{equation}
    \begin{split}
        \label{eq: m3}
   \left| m_3(\xi_1)-m_3(\xi_0)\right|&=\left|\int\limits_{\xi_0}^{\xi_1}\frac{\dot{m_3}}{\dot{\xi}}\mathrm{d}\xi \right|\le\int\limits_{\xi_0}^{\xi_1}\left|-\frac{m_2p -m_1m_3\xi}{(\xi^2+1)(2p-m_1)}\right|\mathrm{d}\xi
  =\int\limits_{\xi_0}^{\xi_1}\left|\frac{p\left(m_2 -f(\xi)\right)}{(\xi^2+1)p(2-\tilde{f}(\xi))}\right|\mathrm{d}\xi =\\&   =\int\limits_{\xi_0}^{\xi_1}\left|\frac{m_2 -f(\xi)}{(\xi^2+1)(2-\tilde{f}(\xi))}\right|\mathrm{d}\xi = \int\limits_{\xi_0}^{\xi_1}\left|\frac{m_2 -f(\xi)}{\xi^2(1 + \frac{1}{\xi^2})(2-\tilde{f}(\xi))}\right|\mathrm{d}\xi  .
    \end{split}
\end{equation}
In the expression above, $\tilde{f}(\xi) = \frac{m_1}{p} \to 0$, since $p\to-\infty$. As for the function $f(\xi)$, it's explicit form is $f(\xi) = \frac{m_1m_3\xi}{p}$. Using the terminology of Lemma \ref{st: estim on p}, $f(\xi) = \frac{m_1\gamma\sqrt{\xi}}{g \sqrt{\xi}} = \frac{m_1\gamma}{g}$. From that same Lemma, we know that $g$ is bounded away from 0 in some neighbourhood of $t^{\ast}$, and consequently it is bounded away from zero as a function of $\xi$ in a neighbourhood of $+\infty,$ as well. Therefore in that neighbourhood the function $f$ is bounded. Then, (\ref{eq: m3}) can be further rewritten as 
\begin{equation}
    \label{eq:estim on m3 but better}
    \begin{split}
     \int\limits_{\xi_0}^{\xi_1}\left|\frac{m_2 -f(\xi)}{\xi^2(1 + \frac{1}{\xi^2})(2-\tilde{f}(\xi))}\right|\mathrm{d}\xi\le \int\limits_{\xi_0}^{\xi_1}\left|\frac{m_2 -f(\xi)}{\xi^2(2-\tilde{f}(\xi))}\right|\mathrm{d}\xi   \le \int\limits_{\xi_0}^{\xi_1}\frac{C_0}{\xi^2}\mathrm{d}\xi = C_0\left(\frac{1}{\xi_0} - \frac{1}{\xi_1}\right)
    \end{split}
\end{equation}
for some non-negative constant $C_0$.

Now we combine the leftmost part of (\ref{eq: m3}) with the rightmost one of (\ref{eq:estim on m3 but better}) and take the limit $\xi_1\to +\infty$. In this case, both $\frac{1}{\xi_1}$ and $m_3(\xi_1)$  turn 0, and we are left with 
\[
\left|m_3(\xi_0)\right|\le \frac{C_0}{\xi_0},
\]

meaning that $m_3\preccurlyeq\frac{1}{\xi}$.
\end{proof}


This important fact tells us that $m_3\xi$ is, in fact, a bounded function, allowing us to  describe the behaviour of $p$ even better: 
\begin{lemma} For $t$ in a suitable left neighbourhood of $t^*$ we have:
\begin{equation}
\label{eq: estim on p}
p  = -\sqrt{\xi}\left(1 - \frac{m_1}{2\sqrt{\xi}} + \overline{O}\left(\frac{1}{\xi}\right)\right).
\end{equation}
\end{lemma}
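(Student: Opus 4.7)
The plan is to read the asymptotic expansion of $p$ directly off the Hamiltonian--Casimir identity \eqref{eq: estimate p}, which is conveniently already in the form of a completed square in $p - m_1/2$. Rearranging it, I would isolate $\left(p - \frac{m_1}{2}\right)^2$ on the left and argue that on the right everything except the term $\xi$ is bounded in a suitable left neighbourhood of $t^*$.

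Making this boundedness claim precise is the only step that uses previously established material in a nontrivial way. The Casimir relation controls $m_1$ and $m_2$ at once; the crucial point is that the just-proved estimate $m_3 \preccurlyeq 1/\xi$ shows that the product $m_3 \xi$ is bounded, so the quadratic term $\left(\sqrt{2}\, m_3 \xi - \frac{m_2}{\sqrt{2}}\right)^2$ on the left-hand side of \eqref{eq: estimate p} is bounded as well. With this in hand, \eqref{eq: estimate p} collapses to
\[
\left(p - \frac{m_1}{2}\right)^2 \;=\; \xi + \overline{O}(1).
\]

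Proposition \ref{st: behaviour of p} then fixes the sign of the square root: since $p \to -\infty$, one has $p - m_1/2 < 0$ in a suitable left neighbourhood of $t^*$, so extracting the negative root gives $p - \frac{m_1}{2} = -\sqrt{\xi}\,\sqrt{1 + \overline{O}(1/\xi)}$, and a one-term Taylor expansion $\sqrt{1+x} = 1 + \overline{O}(x)$ applied to the last factor yields $p = \tfrac{m_1}{2} - \sqrt{\xi} + \overline{O}(1/\sqrt{\xi})$, which becomes \eqref{eq: estim on p} after factoring $-\sqrt{\xi}$. The only real obstacle is the boundedness of $m_3 \xi$: without the preceding proposition the $m_3^2 \xi^2$ contribution in \eqref{eq: estimate p} would only be known to satisfy $m_3^2\xi^2 \preccurlyeq \xi$, and one would recover no more than the already-known leading order $p \sim -\sqrt{\xi}$. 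Once that obstacle is cleared, the subleading constant $m_1/2$ pops out automatically from the linear shift inside the completed square.
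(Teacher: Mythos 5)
Your proposal is correct and follows essentially the same route as the paper: move the bounded square $\left(\sqrt{2}\,m_3\xi-\tfrac{m_2}{\sqrt{2}}\right)^2$ (bounded precisely because $m_3\preccurlyeq 1/\xi$) into a bounded remainder, obtain $\left(p-\tfrac{m_1}{2}\right)^2=\xi+\overline{O}(1)$, choose the negative root using $p\to-\infty$, and Taylor-expand $\sqrt{1+x}$. No gaps to report.
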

\begin{proof}
 Using the newly acquired  estimations on $m_3$, we can rewrite (\ref{eq: estimate p}) as
\[
2\left(p-\frac{m_1}{2} \right)^2- 2\xi = 2a(t),
\]
where $2a(t) := 2h-C + \frac{m_1^2 + m_2^2}{2} - \left(\sqrt{2}m_3\xi - \frac{m_2}{\sqrt{2}}\right)^2$, a bounded function owing to $m_3\preccurlyeq\frac{1}{\xi}$.

Therefore, 
\[
\left(p - \frac{m_1}{2}\right)^2 = \xi+ a(t).
\]
 Then we obtain (since we already know that $p\to -\infty$ at $t\to t^*$)
\begin{equation*}
    \begin{split}
p = &\frac{m_1}{2} - \sqrt{\xi +a(t)} = \frac{m_1}{2} -\sqrt{\xi}\sqrt{1 + \frac{a(t)}{\xi}} = \frac{m_1}{2} -\sqrt{\xi}\left(1 + \frac{a(t)}{2\xi} - \frac{a^2(t)}{8\xi^2} + \overline{O}\left(\frac{1}{\xi^3}\right)\right) =\\=& \frac{m_1}{2} -\sqrt{\xi}\left(1 + \overline{O}\left(\frac{1}{\xi}\right)\right),
\end{split}
\end{equation*}
using boundedness of $a$ and the fact that the Taylor series of $\sqrt{1+x} $ at $x=0$ is $1 + \frac{x}{2} -\frac{x^2}{8} + \overline{O}\left(\frac{1}{x^3}\right)$. This last expression can clearly be rewritten as  \eqref{eq: estim on p}. 
\end{proof}

\begin{theorem}
\label{st: theorem omega limit set}
Along any solution with initial data $D$ leading to a collision, the length of the projection of the solution on the Casimir sphere $\{m_1^2+m_2^2+m_3^2=C\}$ is finite. Thus the omega-limit set $\Omega(D)$ is a singleton of the type described above, $\Omega(D)=\{m_1=m_1^*, m_2=m_2^*, m_3=0, q=0, p=-\infty\}.$
\end{theorem}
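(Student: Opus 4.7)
The claim splits naturally into two parts: first, show that the curve $t \mapsto (m_1(t), m_2(t), m_3(t))$ traced on the Casimir sphere $\{m_1^2+m_2^2+m_3^2 = C\}$ has \emph{finite length} as $t \to t^*$; second, deduce from this (plus $m_3 \to 0$, $q \to 0$, $p \to -\infty$ already established) that the $\omega$-limit set is a singleton. The second step is essentially automatic: a rectifiable curve on the compact Casimir sphere must converge to a unique limit point $(m_1^*, m_2^*, m_3^*)$, and the earlier lemma forces $m_3^* = 0$.

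To show finite length, the plan is to reparametrise the curve by $\xi$, which is legitimate by Lemma \ref{lemma:xiastime} since $\xi$ is strictly monotonic near $t^*$ and $\xi \to +\infty$. The arc length then reads
\[
L \; = \; \int_{\xi_0}^{+\infty} \frac{1}{|\dot\xi|}\sqrt{\dot m_1^2 + \dot m_2^2 + \dot m_3^2}\, d\xi,
\]
and the problem reduces to showing that each quotient $\dot m_i/\dot\xi$ is integrable at $+\infty$.

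For the denominator, combining $p = -\sqrt{\xi}\bigl(1 + \overline{O}(1/\sqrt{\xi})\bigr)$ with $\dot\xi = -(\xi^2+1)(2p-m_1)$ gives at once $\dot\xi \sim \xi^{5/2}$. For the numerators I would feed the bounds $m_3 \preccurlyeq 1/\xi$ (so $m_3\xi$ is bounded), the boundedness of $m_1, m_2$ on the Casimir sphere, and $p \sim -\sqrt{\xi}$ into the right-hand sides of \eqref{eq: two equal masses with xi}. A term-by-term inspection then yields $\dot m_1, \dot m_2 = \overline{O}(\xi)$ and $\dot m_3 = \overline{O}(\sqrt{\xi})$; dividing by $\dot\xi$ produces $|dm_1/d\xi|, |dm_2/d\xi| \preccurlyeq \xi^{-3/2}$ and $|dm_3/d\xi| \preccurlyeq \xi^{-2}$, all integrable on $[\xi_0, +\infty)$. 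Hence $L < \infty$, and part two of the plan closes the proof.

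The main bookkeeping point — rather than a genuine obstacle — is checking that the terms of apparent order $\xi^2$ appearing in the equations, namely $2m_2 m_3\xi^2$ in $\dot m_1$ and $2m_1 m_3\xi^2$ in $\dot m_2$, are in fact $\overline{O}(\xi)$; this follows precisely because $m_3\xi$ is bounded (not merely $\preccurlyeq 1$). One should similarly verify that $m_3 p \preccurlyeq 1/\sqrt{\xi}$ is absorbed into the $\overline{O}(\xi)$ estimate for $\dot m_2$. After this, integrability at infinity is immediate and the singleton description of $\Omega(D)$ follows.
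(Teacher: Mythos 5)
Your proposal is correct and follows the same overall strategy as the paper: reparametrise the projected curve by $\xi$ (justified by Lemma \ref{lemma:xiastime}), bound the arc-length integrand $\sqrt{\dot m_1^2+\dot m_2^2+\dot m_3^2}/|\dot\xi|$ using the previously established estimates, and conclude that a curve of finite length on the compact Casimir sphere converges to a single point, with $m_3^*=0$ forced by the earlier lemma. The one genuine difference is that you bypass the most laborious step of the paper's proof: before computing the length, the paper first derives the refined estimate $2m_3\xi-m_2\preccurlyeq\xi^{-1/2}$ (via the integral representation of $m_3$ and the expansion of $F(\xi)$) and uses it to bound the quantity under the square root by $\overline{O}(\xi)$, obtaining an integrand $\preccurlyeq\xi^{-2}$. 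You instead bound each component crudely from \eqref{eq: two equal masses with xi} using only $m_3\xi$ bounded, $|p|\sim\sqrt{\xi}$ and the boundedness of $m_1,m_2$, which gives $\dot m_1,\dot m_2=\overline{O}(\xi)$, $\dot m_3=\overline{O}(\sqrt{\xi})$ and, since $|\dot\xi|$ is bounded below by a constant multiple of $\xi^{5/2}$ (here it matters that $p=g\sqrt{\xi}$ with $g$ bounded away from zero, so $|2p-m_1|\succcurlyeq\sqrt{\xi}$), an integrand $\preccurlyeq\xi^{-3/2}$ — less sharp than the paper's, but still integrable at $+\infty$, so finiteness of the length and the singleton description of $\Omega(D)$ follow all the same. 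Your two flagged bookkeeping points (absorbing $2m_2m_3\xi^2$, $2m_1m_3\xi^2$ and $m_3p$) are exactly the right ones and are handled correctly by the boundedness of $m_3\xi$.
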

\begin{proof}
 First we need some more estimates involving the $m_i$s variables. 
Our familiar estimate on $m_3$ (after sending the upper integration limit to $+\infty$) turns into 
\begin{equation}
\label{eq: m_3 explicit}
m_3(\xi_0) = \int\limits_{\xi_0}^{+\infty}\frac{m_2p - m_1m_3\xi}{(\xi^2 + 1)(2p-m_1)}\mathrm{d}\xi.
\end{equation}
In light of the new estimation \eqref{eq: estim on p}, we have
\begin{equation}
\label{eq: numerator}
    \begin{split}
        m_2p-m_1m_3\xi &= -m_2\sqrt{\xi}\left(1 -\frac{m_1}{2\sqrt{\xi}} + \overline{O}\left(\frac{1}{\xi}\right)\right) - m_1m_3\xi =\\&= -\sqrt{\xi}\left(m_2 -\frac{m_1m_2}{2\sqrt{\xi}} + \overline{O}\left(\frac{1}{\xi}\right) + m_1m_3\sqrt{\xi}\right).
    \end{split}
\end{equation}
We denote 
\begin{equation}
    \label{eq: f(t)}
    f(\xi) :=\frac{m_1m_2}{2\sqrt{\xi}} - m_1m_3\sqrt{\xi} - \overline{O}\left(\frac{1}{\xi}\right).
    \end{equation}
    Clearly, $f(\xi)\to 0 $ as $\xi\to+\infty$.
    
    \noindent    Additionally, 
    \begin{equation}
        \label{eq: 2p-m1}
        2p-m_1 = -\sqrt{\xi}\left(2+\overline{O}\left(\frac{1}{\xi}\right)\right).
    \end{equation}
 Using (\ref{eq: 2p-m1}) and (\ref{eq: numerator}), we can rewrite (\ref{eq: m_3 explicit}) as
\begin{equation}
\label{eq: m3 another}
m_3(\xi_0) = \int\limits_{\xi_0}^{+\infty}\frac{-\sqrt{\xi}(m_2 -f(\xi))}{-\xi^2\sqrt{\xi}\left(1 + \frac{1}{\xi^2}\right)\left(2   + \overline{O}\left(\frac{1}{\xi}\right)\right)}\mathrm{d}\xi = \int\limits_{\xi_0}^{+\infty}\frac{m_2 - f(\xi)}{\xi^2\left(1 + \frac{1}{\xi^2}\right)\left(2  +\overline{O}\left(\frac{1}{\xi}\right)\right)}\mathrm{d}\xi.
\end{equation}

\noindent Consider 
\begin{equation}
\label{eq: F(xi)}
F(\xi) := \frac{m_2 - f(\xi)}{\left(1 + \frac{1}{\xi^2}\right)\left(2   + \overline{O}\left(\frac{1}{\xi}\right)\right)},
\end{equation}
as we have remarked above, a bounded function; it is clear that  as $\xi\to+\infty$,  $F(\xi)\to\frac{m_2}{2}$. Let us estimate how fast this happens; to that end, take the common denominator in the following expression:
\[
\frac{m_2 - f(\xi)}{\left(1 + \frac{1}{\xi^2}\right)\left(2   + \overline{O}\left(\frac{1}{\xi}\right)\right)}- \frac{m_2}{2} = \frac{-f(\xi) - \frac{m_2}{2}\overline{O}\left(\frac{1}{\xi}\right) - \frac{m_2}{\xi^2} -\frac{m_2}{2}\overline{O}\left(\frac{1}{\xi^3}\right)}{\left(1 + \frac{1}{\xi^2}\right)\left(2   + \overline{O}\left(\frac{1}{\xi}\right)\right)}.
\]
Recall that from (\ref{eq: f(t)}), $f(\xi) = \frac{m_1m_2}{2\sqrt{\xi}} - m_1m_3\sqrt{\xi} - \overline{O}\left(\frac{1}{\xi}\right)\preccurlyeq \frac{1}{\sqrt{\xi}}$ (since $m_3\preccurlyeq\frac{1}{\xi}$) and thus the speed  with which $F(\xi)$ tends to $\frac{m_2}{2}$ is at least $\frac{1}{\sqrt{\xi}}$, entailing that 
\begin{equation}
    \label{eq: F(xi) expand}
F(\xi) = \frac{m_2}{2} + \frac{G(\xi)}{\sqrt{\xi}},
\end{equation}
where $G(\xi)$ is some bounded function. 

Lastly, we estimate the behaviour of $m_3 - \frac{m_2}{2\xi}$. Using the definition of $F(\xi)$ from (\ref{eq: F(xi)}) and the last equality from (\ref{eq: m3 another}) together with (\ref{eq: F(xi) expand}), we can write 
\[
m_3(\xi_0) = \int\limits_{\xi_0}^{+\infty}\frac{\frac{m_2}{2} + \frac{G(\xi)}{\sqrt{\xi}}}{\xi^2}\mathrm{d}\xi.
\]

In order to save space, we quickly remark that $\lim\limits_{t\to t^{\ast}}\left(m_3-\frac{m_2}{2\xi}\right) = 0$ and apply exactly the same methodology as in (\ref{eq: m3}), (\ref{eq:estim on m3 but better}) and (\ref{eq: m_3 explicit}) to get 
\begin{equation}
\label{eq:another bloody estimation}
   \begin{split}
  \left(  m_3 - \frac{m_2}{2\xi}\right)(\xi_0) = \int\limits^{+\infty}_{\xi_0}\frac{\frac{m_2}{2} + \frac{G(\xi)}{\sqrt{\xi}}}{\xi^2}\mathrm{d}\xi + \int\limits_{\xi_0}^{+\infty}\frac{1}{\dot{\xi}}\dot{\left(\frac{m_2}{2\xi}\right)}\mathrm{d}\xi,
   \end{split} 
\end{equation}
since 
\[ 
\frac{m_2}{2\xi}(\xi_1)- \frac{m_2}{2\xi}(\xi_0) = \int\limits_{\xi_0}^{\xi_1}\frac{1}{\dot{\xi}}\dot{\left(\frac{m_2}{2\xi}\right)}\mathrm{d}\xi
\]
and $\lim\limits_{\xi\to+\infty}\frac{m_2}{2\xi}=0$. 
The second part of (\ref{eq:another bloody estimation}) is given by
\begin{equation*}
\begin{split}
\dot{\left(\frac{m_2}{2\xi}\right)} &= \frac{\dot{m}_2}{2\xi} - \frac{m_2}{2\xi^2}\dot{\xi} = \frac{m_1m_2\xi -m_3p - 2m_1m_3\xi^2}{2\xi} - \frac{m_2}{2\xi^2}\dot{\xi}.
\end{split}
\end{equation*}
Therefore, we have for the absolute value of (\ref{eq:another bloody estimation}): \begin{equation*}
    \begin{split}
      \left| (\ref{eq:another bloody estimation})\right|  &=\left| \int\limits_{\xi_0}^{+\infty}\frac{m_2}{2\xi^2}  + \frac{G(\xi)}{\xi^{\frac52}}- \frac{m_1m_2\xi -m_3p - 2m_1m_3\xi^2}{2\xi(\xi^2+1)(2p-m_1)}-\frac{m_2}{2\xi^2}\mathrm{d}\xi\right|\le\\&\le \int\limits_{\xi_0}^{+\infty}\left|\frac{m_2}{2\xi^2}  + \frac{G(\xi)}{\xi^{\frac52}}- \frac{m_1m_2\xi -m_3p - 2m_1m_3\xi^2}{2\xi(\xi^2+1)(2p-m_1)}-\frac{m_2}{2\xi^2}\right|\mathrm{d}\xi=\\
      &=\int\limits_{\xi_0}^{+\infty} \left|\frac{G(\xi)}{\xi^{\frac52}}- \frac{m_1m_2\xi -m_3p - 2m_1m_3\xi^2}{2\xi(\xi^2+1)(2p-m_1)}\right|\mathrm{d}\xi\le
        \int\limits_{\xi_0}^{+\infty}\frac{C_1}{\xi^{\frac52}}\mathrm{d}\xi = C'_1\xi_0^{-\frac32}
    \end{split}
\end{equation*}
for some non-negative constant $C'_1$.

Therefore, we have demonstrated that  $2m_3\xi - m_2\preccurlyeq\xi^{-\frac12}$.

In order to complete our investigation, we recall that $m_1,\  m_2, \ m_3 $ 'live' on the invariant two-dimensional sphere of radius $\sqrt{C}$, where $C$ is the value of the Casimir function. Every trajectory of the system can be projected onto this sphere, and that is what we do next. For this new trajectory on $S^2$, we want to demonstrate that its $\omega$-limit set in $m_1, \ m_2, \ m_3$ is a point. To do so, we compute its length: if the length in question is finite, the omega-limit set is a singleton. 

Denoting with $L(\Gamma)$ the length of the collision solution from time $t_0$ to the time of collision $t^*$ projected on the Casimir sphere and once again integrating with respect to $\xi$ we have:
\begin{footnotesize}
\begin{equation}
\label{eq:last}
    \begin{split}
        &L(\Gamma)=\int\limits_{t_0}^{t^{\ast}}\sqrt{\dot{m}_1^2 + \dot{m}_2^2 + \dot{m}_3^2}\mathrm{d}t =\\& \int\limits_{\xi_0}^{+\infty}\frac{\sqrt{(m_1^2 + m_2^2)\xi^2(m_2 - 2m_3\xi)^2 + (m_3p)^2 + m_3^4\xi^2 -2 m_3^2\xi^2m_2(m_2 - 2m_3\xi) -2m_3pm_1\xi(m_2 - 2m_3\xi) + (m_2p-m_1m_3\xi)^2} }{(\xi^2 + 1)(m_1-2p)}\mathrm{d}\xi,
    \end{split}
\end{equation}
\end{footnotesize}

Let us expound on the last expression by going through the summands under the square root, using all of our previous estimations. 

We know that $2m_3\xi - m_2\preccurlyeq \xi^{-\frac12}$; therefore, $(m_1^2 +m_2^2)\xi^2(m_2-2m_3\xi)^2\preccurlyeq \xi$, and for the same reason $2m_3^2\xi^2m_2(m_2-2m_3\xi)\to 0$ and  $2m_3pm_1\xi(m_2-2m_3\xi)$ is bounded. Additionally, $m_3^2p^2\to0$, $m_3^4\xi^2\to 0$ and $(m_2p - m_1m_3\xi)^2\preccurlyeq \xi$. Hence, the growth of the terms under the square root does not exceed that of $\xi$; taking the square root reduces it to $\xi^\frac12$, which cancels out with that of $p$, leaving $\xi^2 + 1$ in the denominator.  We may conclude that  

\[
(\ref{eq:last})\le\int\limits_{\xi_0}^{+\infty}\frac{K(\xi)}{\xi^2}\mathrm{d}\xi,
\]

with $K(\xi)$ some bounded  positive function.

Since the length of the projected solution is finite, the projection onto the sphere of the omega-limit set is a singleton. But by definition of collision solutions and what we proved before the other dynamical variables $(q, p)$ tend to $(0,-\infty)$, so the omega-limit set is of the form stated. 
\end{proof}

\subsection{Behaviour as a pair}

We have heretofore been describing the system in terms of the reduced variables; the global dynamics of the system are still unclear. However, the nature of the reduction allows us to make a number of observations. 

\begin{observation}
The two bodies move towards the equator that is in the plane perpendicular to the vector of the angular momentum in the fixed frame. 
\end{observation}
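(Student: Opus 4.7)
The plan is to interpret $m_3$ geometrically in the inertial frame and then invoke Theorem \ref{st: theorem omega limit set}, which already provides the asymptotic vanishing of $m_3$ along collision trajectories.

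First I would recall the construction of the body frame from Section 2: the $z'$-axis is aligned with $\bq_1$ (up to a fixed sign, since $\mathbf{x}_1=(0,0,-1)$), and the body frame coordinates $(m_1,m_2,m_3)$ are the components of the inertial angular momentum vector $\mathbf{L}=\bq_1\times\bp_1$ expressed in that rotating basis. In particular, if $g\in SO(3)$ is the rotation taking the inertial frame to the body frame through the Euler angles given in (\ref{eq:Euler angles}), and $\be_3'$ denotes the third column of $g$, then
\begin{equation*}
m_3 \ = \ \langle \mathbf{L}, \be_3'\rangle \ = \ -\langle \mathbf{L}, \bq_1\rangle,
\end{equation*}
where the last equality uses $\bq_1=-\be_3'$. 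Thus $m_3$ has a clean inertial-frame meaning: up to sign, it is the projection of the position vector of the first body onto the (fixed) total angular momentum vector.

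Next I would use this identification together with Theorem \ref{st: theorem omega limit set}, which asserts that $m_3\to 0$ along any collision trajectory. Consequently $\langle \mathbf{L},\bq_1(t)\rangle\to 0$ as $t\to t^\ast$, i.e.\ $\bq_1$ tends to the great circle (``equator'') cut out by the plane perpendicular to $\mathbf{L}$. Since collision trajectories by definition satisfy $\bq_1-\bq_2\to 0$, we also have $\langle \mathbf{L},\bq_2(t)\rangle\to 0$, so both bodies approach the same equator.

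The argument is essentially a bookkeeping translation between the reduced variables and the inertial frame, so I do not expect any real obstacle. The only subtlety worth flagging is the degenerate case $C=\|\mathbf{L}\|^2=0$, in which the ``equator perpendicular to $\mathbf{L}$'' is not defined; this case should be briefly acknowledged and excluded (or, equivalently, the statement made contingent on $C>0$), since for $C=0$ the claim is vacuous: any great circle qualifies as perpendicular to the zero vector and the geometric assertion loses content.
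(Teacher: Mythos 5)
Your proposal is correct and follows essentially the paper's own argument: the paper aligns the fixed frame so that $\mathbf{L}=(0,0,L)$ and reads off $m_3=L\cos\theta$ from the Euler-angle form of $g$, which is the same identity as your coordinate-free $m_3=\langle\mathbf{L},\be_3'\rangle=-\langle\mathbf{L},\bq_1\rangle$, and both arguments then conclude from $m_3\to0$ together with $q\to0$ (equivalently $\bq_1-\bq_2\to0$) that both bodies approach the equator perpendicular to $\mathbf{L}$. Your remark about the degenerate case $C=0$ is a reasonable extra caveat not addressed in the paper.
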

Recall that $\mathbf{m} = \left(m_1,m_2,m_3\right)$ represent the angular momentum in the body frame. Therefore, the angular momentum $\mathbf{L}$ in the fixed frame is given by $\mathbf{L} = g \mathbf{m}$, where $g$ is as in (\ref{eq:Euler angles}). Rotating the fixed frame accordingly, we may assume that $\mathbf{L} = (0,0,L)$ and, consequently,
\begin{equation}
    \label{eq: m}
\mathbf{m} = \left(L\sin\theta\sin\phi ,\ L \sin\theta\cos\phi  ,\ L \cos\theta \right)
\end{equation}
Since $m_3\to0$, it immediately follows that $\theta\to\frac{\pi}{2}$. Recall that (Figure \ref{fig: two coord systems}) $\pi-\theta$ is the angle between the vertical fixed axis and the coordinate vector of the first body: this  
forces the first body into the  plane perpendicular to the vector of angular momentum. Since $q\to0$, the second body will be moving towards the equator as well. 
\begin{observation}
The two bodies cannot rotate around each other infinitely many times. 
\end{observation}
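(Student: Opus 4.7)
The plan is to exploit the parameterisation (\ref{eq: m}) of the body--frame angular momentum in terms of the Euler angles, together with the conclusion of Theorem \ref{st: theorem omega limit set} that $(m_1,m_2,m_3)$ tends to a definite limit $(m_1^{\ast},m_2^{\ast},0)$. The strategy is to show that the precession angle $\phi$, which records the rotation of the body frame about the (fixed) direction of the angular momentum vector $\mathbf{L}$, converges to a finite limit as $t\to t^{\ast}$. Since the pair of bodies is, by construction, rigidly attached to the body frame (body $1$ along $-z'$, body $2$ in the $y'z'$-plane), boundedness of $\phi$ translates directly into only finitely many full turns of one body around the other.

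Concretely, orient the fixed frame so that $\mathbf{L}=(0,0,L)$ with $L=\sqrt{C}$. From (\ref{eq: m}) we have the identities
\[
\sin\phi(t)\;=\;\frac{m_1(t)}{L\sin\theta(t)},\qquad \cos\phi(t)\;=\;\frac{m_2(t)}{L\sin\theta(t)}.
\]
By the preceding Observation, $m_3\to 0$ forces $\theta\to \pi/2$ and hence $\sin\theta\to 1$, while by Theorem \ref{st: theorem omega limit set} the numerators tend to $m_1^{\ast}$ and $m_2^{\ast}$. Therefore $(\sin\phi(t),\cos\phi(t))$ converges to the fixed point $(m_1^{\ast}/L,\,m_2^{\ast}/L)$ on the unit circle.

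Next I would upgrade this circle-valued convergence to convergence of $\phi$ in $\mathbb{R}$. The function $\phi(t)$ is continuous on $[t_0,t^{\ast})$, and its image under $\mathbb{R}\to S^1$ converges to a unique point; were $\phi(t)$ to wind an unbounded number of times, it would pass through values arbitrarily far from every lift of the limit modulo $2\pi$, and consequently $\sin\phi$ and $\cos\phi$ would oscillate rather than converge. Hence there exists $k\in\mathbb{Z}$, determined by the initial datum, such that $\phi(t)\to \phi^{\ast}+2\pi k$ in $\mathbb{R}$.

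The remaining point, which is really where the geometric meaning of the statement has to be unpacked, is to identify ``rotation of the two bodies around each other'' with the winding of $\phi$. Because both bodies asymptotically collapse to a common point on the equator $\mathbf{L}^{\perp}$ (previous Observation) and their relative configuration within the body frame is rigid in $q$, the azimuthal motion of the pair around $\mathbf{L}$ is exactly the precession recorded by $\phi$. The main (mild) obstacle is this geometric identification, together with the continuity argument that promotes convergence of $(\sin\phi,\cos\phi)$ to convergence of $\phi$ itself; both are straightforward consequences of the Euler--angle setup introduced in Section~2 and of the singleton $\omega$-limit established in Theorem \ref{st: theorem omega limit set}.
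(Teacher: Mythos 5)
Your proposal is correct and follows essentially the same route as the paper: it uses the Euler-angle expression (\ref{eq: m}), the fact that $m_3\to 0$ forces $\sin\theta\to 1$, and the convergence $m_1\to m_1^{\ast}$, $m_2\to m_2^{\ast}$ from Theorem \ref{st: theorem omega limit set} to conclude that $\phi$ cannot keep winding. Your version merely makes explicit the lifting step from convergence of $(\sin\phi,\cos\phi)$ on the circle to convergence of $\phi$ in $\mathbb{R}$, which the paper leaves implicit.
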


Now, consider the angle $\phi$ in Figure \ref{fig: two coord systems}: it is the angle between the $x'$-axis and the vector $x''$ along the line where the $x-y$ and $x'-y'$ planes intersect. Since the coordinate vector of the first body coincides with $z'$ and the second body always lies in $z'-y'$ plane,  this angle describes the relative rotation of the two bodies. 

Since we know that $\theta \rightarrow \frac{\pi}{2}$, we have that $\sin(\theta) \rightarrow 1$. Moreover, from (\ref{eq: m}), $m_1 = L\sin\theta\sin\phi ,\  m_2= L\sin\theta\cos\phi  $. Therefore, if the two bodies keep rotating around each other, the angle  $\phi(t)$ must keep changing; on the other hand, we know by the description of the omega-limit set that $m_1\rightarrow m_1^*$ and $m_2\rightarrow m_2^*$ in finite time, so therefore the two bodies can not rotate around each other infinitely many times.
\begin{figure}
    \centering
    \subfigure[In $q-p$ coordinates]{ \includegraphics[scale =0.45]{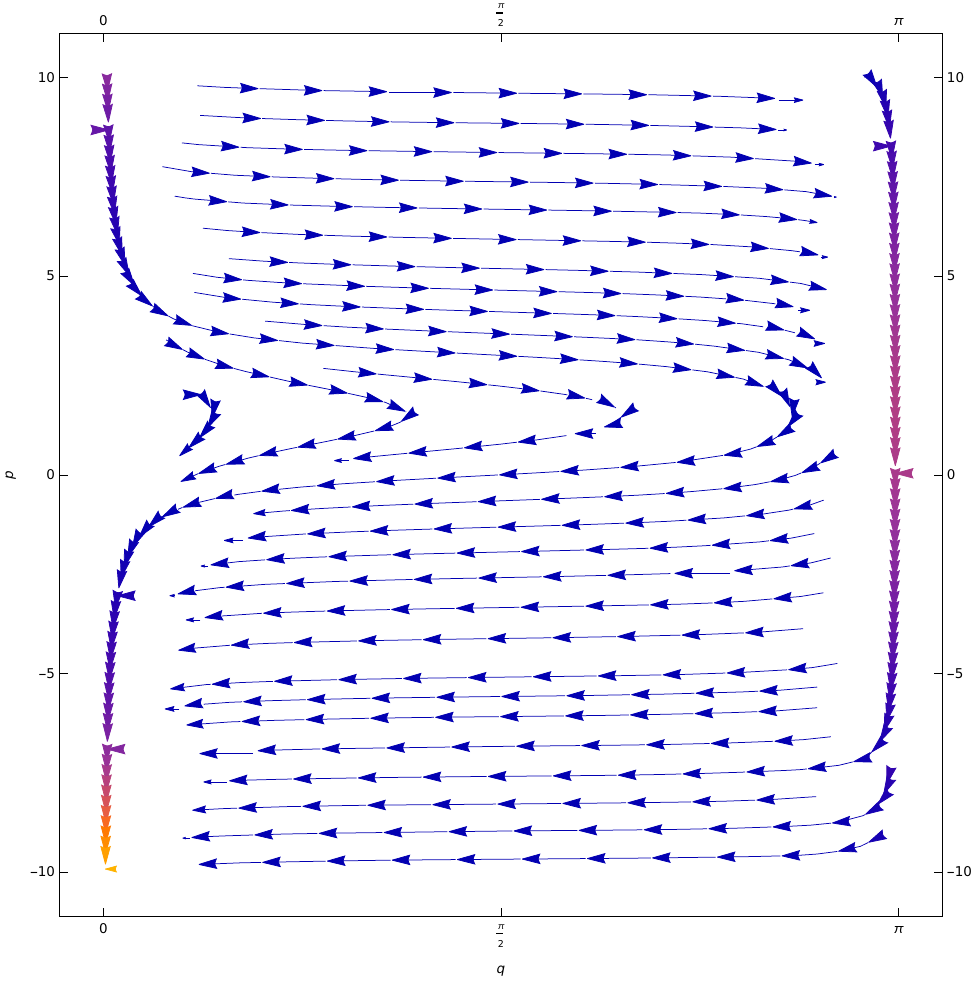}}
  \subfigure[In $\xi-p$ coordinates]{ \includegraphics[scale =0.45]{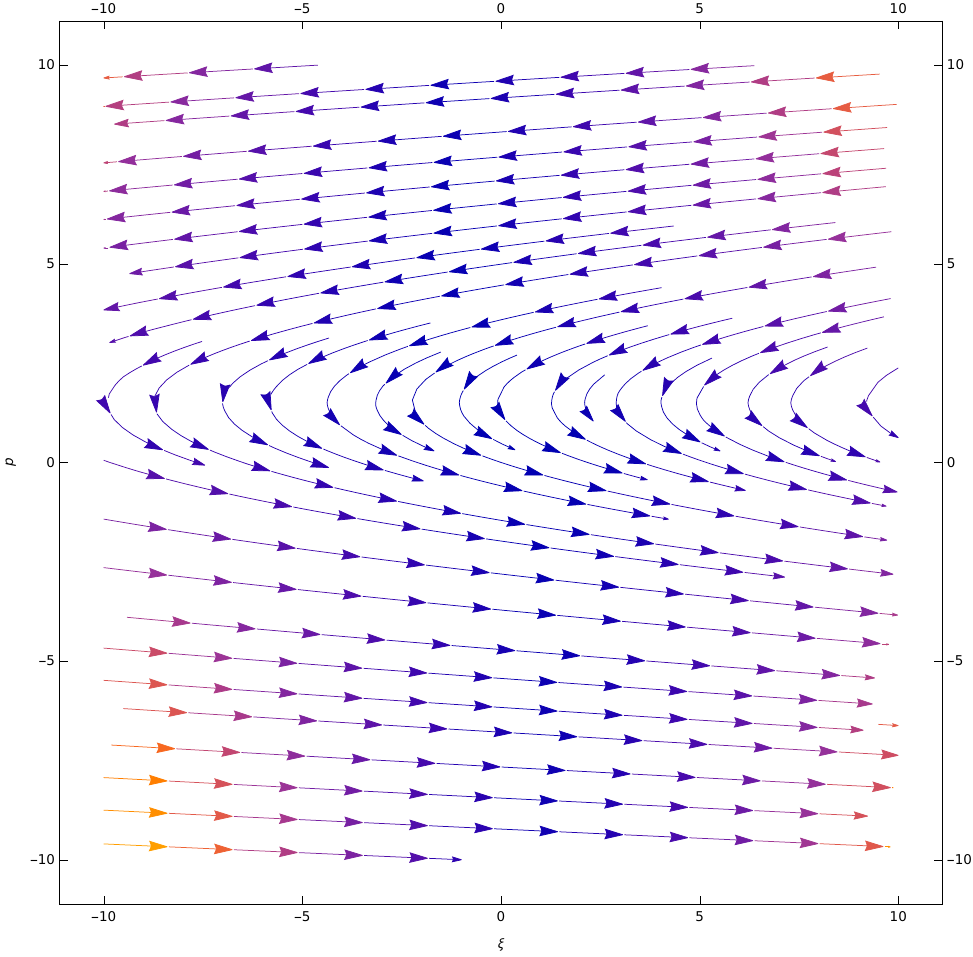}}
\caption{Dynamics in the invariant plane with $C=9$}
\label{fig:Dynamics in the invariant plane}
\end{figure}
\
\section{Regularisation}
In this Section we study the regularisation of collision singularities for the system \eqref{eq: two equal masses with xi}. This entails the following steps. First we want the collision singularity (the omega-limit singleton $\Omega(D)$) to become an ordinary equilibrium. This is achieved via coordinates transformation and possibly a time re-scaling. If the resulting equilibrium is degenerate (which is the case), then we proceed with blow-up in order to understand the dynamics of near collision trajectories. In the general case, we will have to perform a high-dimensional non-homogeneous blow up, using two different charts.

For the sake of being self-contained, we introduce briefly the blow-up procedure. Let $X$ be a vector field in an open neighborhood $U$ of zero in $\mathbb{R}^n.$ Suppose zero is an isolated degenerate equilibrium of $X$, meaning that the determinant of the linearization of $X$ at zero is zero (in our case the linearization is identically zero). In order to understand the dynamics near zero, one introduces the map $$\phi: M:=S^{n-1}\times [0, +\infty)\rightarrow \mathbb{R}^n,$$
$$\phi: ((q_1, \dots q_{n-1}),r)\mapsto (x_1=r^{\alpha_1}\cos(q_1), x_2=r^{\alpha_2}\sin(q_1)\cos(q_2), \dots, x_n=r^{\alpha_n}\sin(q_1)\dots \sin(q_{n-1})),$$
where $q_1, \dots q_n$ are angular coordinates on $S^{n-1}$, and $\{\alpha_1, \dots, \alpha_n\}$ are positive integers. Such a map is a called a blow-up with weights $\{\alpha_1, \dots, \alpha_n\}$. It is in particular called homogeneous if all the weights are equal, non-homogeneous otherwise. Notice that $M$ is a semi-infinite cylinder over $S^{n-1}$ with boundary $\partial M=S^{n-1}\times\{0\}.$ Moreover, $\phi: M\setminus\partial M \rightarrow \mathbb{R}^n\setminus{\{0\}}$ is a global diffeomorphism (the change of coordinates given by generalized spherical coordinates). Notice that the pre-image via $\phi$ of the origin in $\mathbb{R}^n$ is $\partial M=S^{n-1}\times \{0\}$.
$\partial M$ is sometimes called {\em the exceptional divisor}, a terminology originating in Algebraic Geometry. 

One can define the vector field $\hat{X}$ on $M$ such that $\phi_{*}(\hat{X})=X,$ where $\phi_{*}$ is the push-forward map. $\hat{X}$ is called the {\em complete lift} of $X$ to the blow-up.  Computationally this simply means to re-write the vector field $X$ in coordinates $((q_1, \dots q_n), r)$ for $r>0$ and then considering its extension to the boundary of $M$ too, i.e. for $r=0.$ 
Let's denote with $j_k(X)(p)$ the $k$-th jet of $X$ at $p$. We can think of the $k$-th jet as the collection of all partial derivatives of $X$ up to order $k$ at the point $p$. If $j_k(X)(0)=0$, then $j_k(\hat{X})(u)=0$ for all $u\in \partial M.$
In particular, $\hat{X}$ is also very degenerate on $\partial M.$

In order to remove the degeneracy, we introduce the vector field $\bar{X}=\frac{1}{r^k}\hat{X}$, where $k$ is a positive integer. Concretely, $k$ can be chosen to be the smallest positive integer such that $\bar{X}_{|\partial M}$ is not identically zero. Then $\bar{X}$ is also a smooth vector field on $M$. In particular, on $M\setminus{\partial M}$, division by $r^k$ does not change the orbits of $\hat{X}$ or their orientation, but only the time parametrization. In general, the equilibria of $\bar{X}_{|\partial M}$ will be isolated and often non-degenerate (if they are one needs to perform more blow-ups on those that are degenerate).

Thus, one equilibrium at the origin is replaced by multiple equilibria on the blow-up sphere. If they all are non-degenerate, using standard methods, one can determine the type of new equilibria and understand how the vector field behaves around the exceptional divisor: this is akin to zooming in very closely at the origin. Once all new equilibria are described, we can think of collapsing the blow-up sphere back to a point, to get a picture of the behavior of the vector field $X$ near $0$. 
For more on this (in the planar case) see for instance \cite{dumortier2006qualitative}. 

\subsection{Invariant plane}

Prefacing the more general case, we investigate in depth a particular instance of our problem: namely, the one where two bodies are restricted to move on a great circle.  

 As it was mentioned above, due to the rotational symmetry of the system we may chose the reference frame in such a way that the angular momentum is along the $z$-axis and the two bodies move on the circle in the $(x,y)$-plane. The  angle $\theta$ between the $z$-axis of the fixed frame and the $z'$-axis of the moving frame is $\frac{\pi}{2}$, meaning that $m_3 = 0$ and $\dot{m}_3  = m_2p-m_1m_3\xi= 0$. 
 
 Since $\dot{p} = -\csc^2(q)$ in this case, $p$ can't be identically zero, and therefore $m_2 =0$, forcing $m_1 = \pm\sqrt{C}$. 
 
 Hence, in this case the system is reduced to a two-dimensional one in $(q,p)$ (or $(\xi,p)$) invariant plane.
 
 As a result, we are left with the following system:
 \begin{equation}
     \label{eq:reduced tw dim}
     \begin{cases}
      \dot{q} = 2p  \pm\sqrt{C},\\
      \dot{p}  =-\csc^2(q);
     \end{cases} \ \mathrm{or} \ \begin{cases}
      \dot{\xi} = -(2p  \pm\sqrt{C})(\xi^2 + 1);\\
      \dot{p}  =-(\xi^2 + 1).
     \end{cases} 
 \end{equation}

Phase portraits for both of the systems are depicted in Figure \ref{fig:Dynamics in the invariant plane}. 

\begin{rmk}
It can be checked analytically (via the explicit formulae for $\psi$, for example) that the collision of the two bodies will always happen head on in this case. 
\end{rmk}

\begin{figure}
\centering
\begin{tikzpicture}[x=0.75pt,y=0.75pt,yscale=-0.7,xscale=0.7]

\draw   (170,210.84) .. controls (170,143.92) and (224.25,89.67) .. (291.16,89.67) .. controls (358.08,89.67) and (412.33,143.92) .. (412.33,210.84) .. controls (412.33,277.75) and (358.08,332) .. (291.16,332) .. controls (224.25,332) and (170,277.75) .. (170,210.84) -- cycle ;
\draw    (109.11,209.67) -- (235.11,210.67) ;
\draw   (141.56,202.85) .. controls (145.37,206.92) and (149.19,209.36) .. (153,210.17) .. controls (149.19,210.99) and (145.37,213.43) .. (141.56,217.5) ;
\draw    (344.11,209.67) -- (470.11,210.67) ;
\draw  [fill={rgb, 255:red, 0; green, 0; blue, 0 }  ,fill opacity=1 ] (167.59,209.14) .. controls (167.87,207.89) and (169.12,207.11) .. (170.36,207.4) .. controls (171.61,207.68) and (172.39,208.92) .. (172.11,210.17) .. controls (171.82,211.42) and (170.58,212.2) .. (169.33,211.91) .. controls (168.08,211.63) and (167.3,210.39) .. (167.59,209.14) -- cycle ;
\draw  [fill={rgb, 255:red, 0; green, 0; blue, 0 }  ,fill opacity=1 ] (368.92,119.81) .. controls (369.21,118.56) and (370.45,117.78) .. (371.7,118.06) .. controls (372.95,118.35) and (373.73,119.59) .. (373.44,120.84) .. controls (373.16,122.08) and (371.91,122.87) .. (370.67,122.58) .. controls (369.42,122.3) and (368.64,121.05) .. (368.92,119.81) -- cycle ;
\draw  [fill={rgb, 255:red, 0; green, 0; blue, 0 }  ,fill opacity=1 ] (207.26,120.14) .. controls (207.54,118.89) and (208.78,118.11) .. (210.03,118.4) .. controls (211.28,118.68) and (212.06,119.92) .. (211.77,121.17) .. controls (211.49,122.42) and (210.25,123.2) .. (209,122.91) .. controls (207.75,122.63) and (206.97,121.39) .. (207.26,120.14) -- cycle ;
\draw  [fill={rgb, 255:red, 0; green, 0; blue, 0 }  ,fill opacity=1 ] (410.07,210.32) .. controls (410.35,209.07) and (411.6,208.29) .. (412.85,208.58) .. controls (414.09,208.86) and (414.87,210.1) .. (414.59,211.35) .. controls (414.3,212.6) and (413.06,213.38) .. (411.81,213.09) .. controls (410.57,212.81) and (409.79,211.57) .. (410.07,210.32) -- cycle ;
\draw   (196,217.5) .. controls (193.02,213.57) and (190.04,211.21) .. (187.06,210.42) .. controls (190.04,209.64) and (193.02,207.28) .. (196,203.35) ;
\draw   (179.32,230.9) .. controls (175.9,234.46) and (174.04,237.77) .. (173.73,240.84) .. controls (172.48,238.02) and (169.69,235.45) .. (165.34,233.11) ;
\draw   (164.85,184.7) .. controls (169.57,183.25) and (172.8,181.25) .. (174.56,178.72) .. controls (174.28,181.79) and (175.48,185.4) .. (178.15,189.55) ;
\draw    (165.52,76.19) -- (253.99,166.08) ;
\draw    (416.7,75.9) -- (326.81,164.38) ;
\draw   (413.65,176.3) .. controls (410.97,180.45) and (409.77,184.06) .. (410.06,187.13) .. controls (408.3,184.59) and (405.06,182.6) .. (400.35,181.14) ;
\draw   (402.09,242.76) .. controls (406.27,240.13) and (408.88,237.37) .. (409.92,234.47) .. controls (410.44,237.51) and (412.53,240.68) .. (416.19,243.99) ;
\draw   (393.5,217) .. controls (390.52,213.07) and (387.54,210.71) .. (384.56,209.92) .. controls (387.54,209.14) and (390.52,206.78) .. (393.5,202.85) ;
\draw   (425.06,202.85) .. controls (428.87,206.92) and (432.69,209.36) .. (436.5,210.17) .. controls (432.69,210.99) and (428.87,213.43) .. (425.06,217.5) ;
\draw   (195.87,97.26) .. controls (195.2,102.15) and (195.64,105.92) .. (197.19,108.59) .. controls (194.53,107.03) and (190.75,106.6) .. (185.86,107.27) ;
\draw   (223.69,145.09) .. controls (224.36,140.2) and (223.92,136.42) .. (222.37,133.76) .. controls (225.03,135.31) and (228.81,135.75) .. (233.69,135.08) ;
\draw   (185.66,139.26) .. controls (190.56,138.64) and (194.1,137.24) .. (196.26,135.05) .. controls (195.46,138.02) and (196.01,141.79) .. (197.92,146.33) ;
\draw   (235.44,111.82) .. controls (230.89,109.9) and (227.13,109.35) .. (224.16,110.16) .. controls (226.34,107.99) and (227.74,104.45) .. (228.36,99.56) ;
\draw   (361.19,140.27) .. controls (356.3,139.59) and (352.53,140.03) .. (349.87,141.59) .. controls (351.42,138.92) and (351.86,135.15) .. (351.19,130.26) ;
\draw   (379.36,102.58) .. controls (384.25,103.25) and (388.03,102.81) .. (390.69,101.26) .. controls (389.14,103.92) and (388.7,107.7) .. (389.37,112.59) ;
\draw   (392.42,131.01) .. controls (390.51,135.56) and (389.95,139.32) .. (390.76,142.3) .. controls (388.59,140.11) and (385.06,138.71) .. (380.16,138.09) ;
\draw   (354.1,115.37) .. controls (352.58,109.95) and (350.26,105.88) .. (347.16,103.19) .. controls (351.04,104.52) and (355.72,104.5) .. (361.18,103.11) ;

\end{tikzpicture}
\caption{blow-up of the point $\eta=\zeta=0$ in the invariant plane}
\label{fig: blow-up in the invatiant plane}
\end{figure}
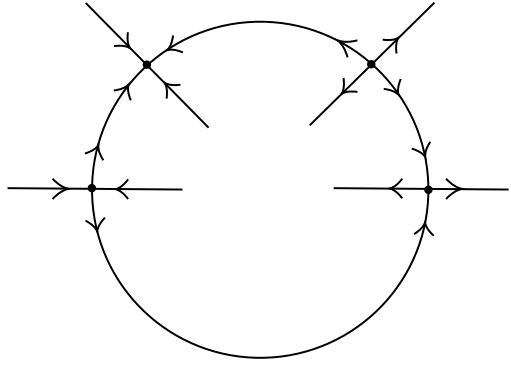
In order to regularise the system, we introduce a coordinate transformation $\eta = \frac{1}{\xi}$ and $\zeta = \frac{p}{\xi}$. It follows from Remark \ref{st:remark about p}  that $\eta,\ \zeta\to0$. After the change, (\ref{eq:reduced tw dim}) becomes
\begin{equation*}
    \begin{cases}
    \dot{\eta} = \frac{1}{\eta}(2\zeta \pm\sqrt{C}\eta)(\eta^2 + 1)\\
    \dot{\zeta} = \frac{1}{\eta^2}(\eta^2 + 1)(2\zeta^2 \pm\sqrt{C}\eta\zeta -\eta).
    \end{cases}
\end{equation*}
To eliminate singularities at the origin, we introduce fictitious time $\tau$, such that $\frac{\mathrm{d}t}{\mathrm{d}\tau} = \eta^2$. If we denote differentiating with respect to $\tau$ as $'$, we get
\begin{equation}
    \label{eq: regularisation in the invariant plane}
    \begin{cases}
   \eta' = \eta(2\zeta \pm\sqrt{C}\eta)(\eta^2 + 1)\\
    \zeta' = (\eta^2 + 1)(2\zeta^2 \pm\sqrt{C}\eta\zeta -\eta)
    \end{cases}
\end{equation}
The point $\eta=\zeta=0$ is the only equilibrium of the regularised system (and hence it is isolated); however, it can be easily checked to be a  degenerate one. In order to determine the behaviour of the system on the trajectories nearby, we blow up the point $(0,0)$ in $(\eta,\zeta)$-plane (we will reprise this technique later on for the more general case). The change of variables (non-homogenous blow-up as in  \cite{dumortier2006qualitative}) $\eta =r^2\sin(\phi), \ \zeta = r\cos(\phi)$ gives the system 
\[
\begin{cases}
\dot{r}= \frac{r^2\left((\sin(\phi)-2) \cos(\phi)\pm r \sqrt{C} \sin(\phi)\right)  \left(r^4\sin^2(\phi)+1\right)}{\cos^2(\phi)-2}\\
\dot{\phi}= -\frac{r \sin(\phi) \left(r^4\sin^2(\phi)+1\right) \left(\pm\sqrt{C}\sin(\phi) \cos(\phi) r-2 \cos^2(\phi)+2 \sin(\phi)\right)}{\cos^2(\phi)-2}
\end{cases}
\
\]
which has a fixed equilibrium when $r=0$. Dividing it by $r$ yields
\begin{equation}
\label{eq: sys invariant plane divided}
\begin{cases}
\dot{r}=  \frac{r\left((\sin(\phi) - 2)\cos(\phi) \pm r\sqrt{C}\sin(\phi)\right)\left(1 + r^4\sin^2(\phi)\right)}{\cos^2(\phi) - 2 }\\
\dot{\phi} = -\frac{\sin(\phi)\left(r^4\sin^2(\phi)+1\right)\left(\pm\sin(\phi)\cos(\phi)\sqrt{C}r - 2\cos^2(\phi) + 2\sin(\phi)\right)}{\cos^2(\phi) - 2}
\end{cases}
\end{equation}

When $r=0$, $\dot{r} = 0$, and  the equation $\dot{\phi} =0$ reduces to 
\[
-\frac{2\sin(\phi)( \sin(\phi)-\cos^2(\phi))}{\cos^2(\phi) - 2}=0,
\]
with solutions
\begin{equation}
    \label{eq:solutions in phi}
    \begin{sqcases}
       \phi = 0,\pi;\\
       \phi = \arctan\left(\sqrt{\frac{1}{2}\left(\sqrt{5}-1\right)}\right),\pi- \arctan\left(\sqrt{\frac{1}{2}\left(\sqrt{5}-1\right)}\right)
    \end{sqcases}
\end{equation}
For $\phi = 0,\pi$, the matrix of the linearised system is, respectively,
\[
\begin{pmatrix}
2&0\\
0&-2
\end{pmatrix} \ \mathrm{and }\ \begin{pmatrix}
-2&0\\
0&2
\end{pmatrix},
\]
signifying that $(r,\phi) = (0,0)$ and $(r,\phi) = (0,\pi)$ are both saddle points. 

When $\phi$ is the third solution from (\ref{eq:solutions in phi}), the Jacobian is 
\[
\begin{pmatrix}
\sqrt{\frac{ \sqrt{5}-1}{2}}& 0\\
\frac{\pm8\sqrt{C} - 4}{\sqrt{2 + 2\sqrt{5}}\left(5 + \sqrt{5}\right)}& \frac{2\sqrt{2}}{\sqrt{ \sqrt{5}-1}\left(\sqrt{5} + 1\right)}
\end{pmatrix}
\]
and when it is the fourth, 
\[
\begin{pmatrix}
-\sqrt{\frac{ \sqrt{5}-1}{2}}& 0\\
-\frac{\pm8\sqrt{C} - 4}{\sqrt{2 + 2\sqrt{5}}\left(5 + \sqrt{5}\right)}& -\frac{2\sqrt{2}}{\sqrt{ \sqrt{5}-1}\left(\sqrt{5} + 1\right)}.
\end{pmatrix}
\]
Hence, $(r,\phi) = \left(0,\arctan\left(\frac{\sqrt{5} - 1}{\sqrt{ 2\sqrt{5}-2}}\right)\right) $ is a repelling node and  $(r,\phi) = \left(0,-\arctan\left(\frac{\sqrt{5} - 1}{\sqrt{ 2\sqrt{5}-2}}\right)\right) + \pi $ is an attracting one. 

The dynamics on the blow-up circle are depicted in Figure \ref{fig: blow-up in the invatiant plane}, and the phase portrait in Figure \ref{fig:my_label}.

\begin{figure}
    \centering
    \includegraphics[scale=.4]{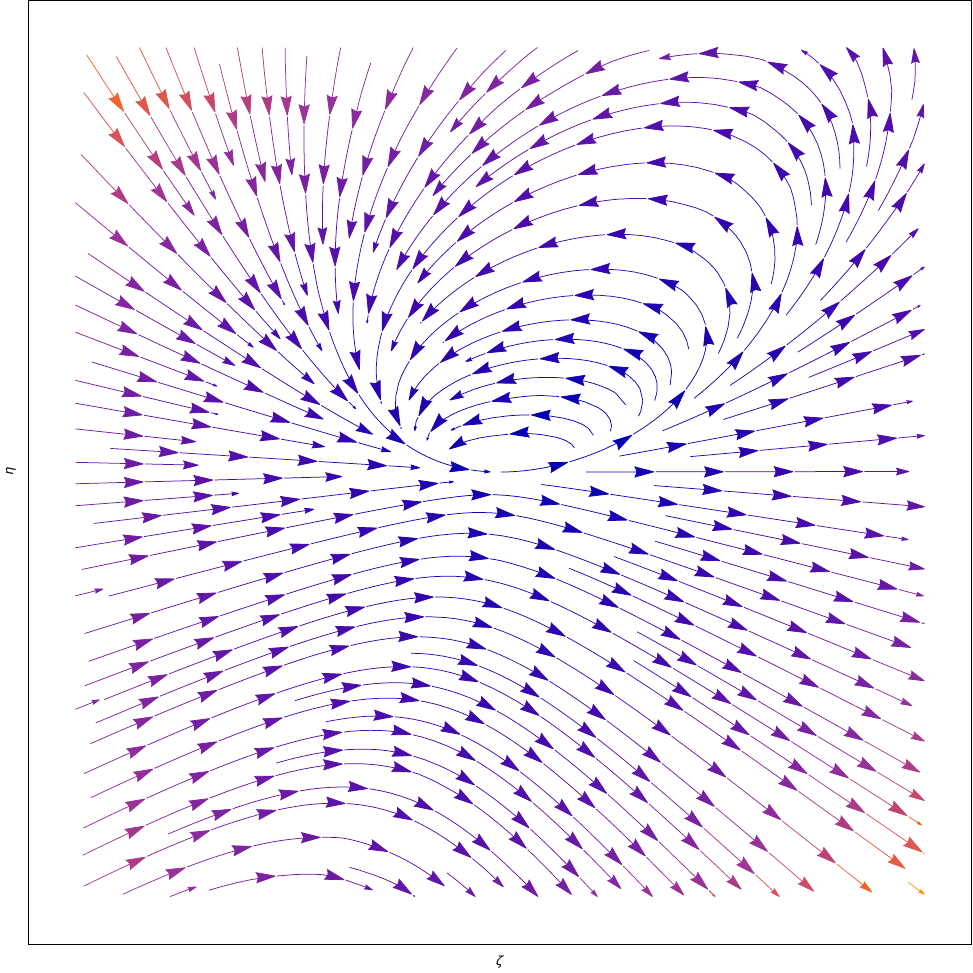}
    
    \caption{Local dynamics near the point $\zeta= \eta=0$. The plot legend describes the magnitude of the vector field at the point.}
    \label{fig:my_label}
\end{figure}

\begin{rmk}
All of the circle-like trajectories in the left part of Figure \ref{fig:my_label} are closed trajectories, passing through the origin. The magnitude of the vector field near it is almost zero; it takes infinite time to reach the origin.
\end{rmk}
\subsection{General case}

In this section, we aim to regularise collision orbits in the general case. In order to do so, we introduce the same change of variables as above: $\eta = \frac{1}{\xi}, \ \zeta = \frac{p}{\xi}$.The new system of equations is 
\begin{equation}
    \label{eq: regular prelim}
    \begin{cases}
    \dot{m}_1 = \frac{1}{\eta^2}\left(\left(m_3^2-m_2^2\right)\eta + 2m_2m_3\right);\\
    \dot{m}_2 = \frac{1}{\eta^2}\left(m_1m_2\eta - m_3\zeta\eta  - 2m_1m_3\right);\\
    \dot{m}_3 = \frac{1}{\eta}\left(m_2\zeta-m_1m_3\right);\\
    \dot{\eta} = \frac{\eta^2 +1}{\eta}\left(2\zeta - m_1\eta\right);\\
    \dot{\zeta} = \frac{\eta^2 +1}{\eta^2}\left(2m_3^2 - (1+m_2m_3)\eta + 2\zeta^2 - m_1\eta\zeta\right).
    \end{cases}
\end{equation}
The second step is identical, too: a change to fictitious time  $\tau$, with $\mathrm{d}t = \eta^2\mathrm{d}\tau$. The equations then take the form

\begin{equation}
    \label{eq:reduction}
    \begin{cases}
     m'_1 = \left(m_3^2-m_2^2\right)\eta + 2m_2m_3;\\
    m'_2 =m_1m_2\eta - m_3\zeta_\eta  - 2m_2m_3;\\
    m'_3 = \eta\left(m_2\zeta-m_1m_3\right);\\
    \eta' = (\eta^2 +1)\eta\left(2\zeta - m_1\eta\right);\\
    \zeta' = (\eta^2 +1)\left(2m_3^2 - (1+m_2m_3)\eta + 2\zeta^2 - m_1\eta\zeta\right),
    \end{cases}
 \end{equation}
 
which is our final regularised system.  As it was mentioned above, when the system tends to a collision, $\zeta,\ \eta, \ m_3\to 0$. This means that all points with coordinates $(m_1,m_2,0,0,0)$ are, in fact, equilibria (though, importantly, not all of them are reached by the system). Unsurprisingly, all of these equilibria turn out to be degenerate, and, once again, we have to employ the blow-up technique to investigate the dynamics nearby.

The standard strategy in this case would be using the five-dimensional blow-up together with  various charts to cover the entire sphere (a four-dimensional example of this technique, performed on some selected charts, can be found in \cite{jardon2016analysis}) ; however, since equilibria form a two-dimensional plane (as a result, there are no dynamics in said plane), we can 'get away' with blowing up in three dimensions: namely, $m_3,\ \eta,$ and $ \zeta$. The linearised matrices of the system will have two zero eigenvalues, but that is expected from the nature of our blow-up. 

We make the coordinate change $m_3 = r\cos(q_1),\  \eta = r^2\sin(q_1)\cos(q_2),\  \zeta = r\sin(q_1)\sin(q_2) $, leaving $m_1$ and $m_2$ intact, leading to a non-homogeneous blow-up, where $(q_1, q_2)$ are angular coordinates with $q_1\in [0, \pi]$ and $q_2\in [0,2)$.

 The vector field, as written in the new coordinates, has $r=0$ as an equilibrium, so division by $r$ is required. To save space, we  put  the formulae for the  new vector field in  Appendix \ref{sec: appendix} - the equation in question is (\ref{eq: initial vf regularised}). After division by $r$ and setting $r=0$, (\ref{eq: initial vf regularised}) becomes
\begin{equation}
\label{eq: r=0 regularisation}
    \begin{split}
        &2\cos(q_1) m_2\frac{\partial}{\partial m_1} - 2\cos(q_1) m_1 \frac{\partial}{\partial m_2}  - \frac{\sin(q_2)\cos(q_1) ( \sin(q_1)\cos(q_2) - 2) }{1 +\sin^2(q_1)\cos^2(q_2)}\frac{\partial}{\partial q_1} -\\&- \frac{\cos(q_2) \left( 2\sin^4(q_1)\cos^2(q_2)-2 + \left(2- \cos^2(q_1)   \right)\sin(q_1)\cos(q_2)\right)}{\left((1 +\sin^2(q_1)\cos^2(q_2)\right)  \sin(q_1)}\frac{\partial}{\partial q_2}
    \end{split}
\end{equation}
\begin{rmk}
Observe that $q_1$ and $q_2$-components of the vector field do not depend on $m_1$ and $m_2$ - this allows us to draw the dynamics on the blow-up sphere, unconcerned with the behaviour of  $m_1$ and $m_2$. 
\end{rmk}
As it is apparent from \eqref{eq: r=0 regularisation}, all equilibria must have $\cos(q_1)=0$ and, since due to the nature of our coordinate change  $q_1\in[0, \pi]$, $q_1 = \frac{\pi}{2}$. This  turns the coefficient at $\frac{\partial}{\partial q_1}$ to 0, and we only need to deal with the last component, which gives the equation
\[
\frac{-2\cos(q_2)\left( \cos(q_2) - \sin^2(q_2)\right)}{\cos^2(q2) + 1} =0
\]
with solutions $q_2 = \arccos\left(\frac{\sqrt{5} - 1}{2}\right), 2\pi- \arccos\left(\frac{\sqrt{5} - 1}{2}\right)$ and $q_2 = \frac{\pi}{2}, \frac{3\pi}{2}$.

Since $q_2\in[0,2\pi)$, these are all possible equilibria. Before determining their type, recall that in the new vector field $m_1$ and $m_2$ have nontrivial dynamics; we describe them first. 

When we are near the surface of the blow-up sphere,  $m_3$ is almost zero; therefore, the leading components of  $m_1$ and $m_2$ belong to a circle  $m_1^2+m_2^2=C$.  From (\ref{eq: r=0 regularisation}), $m_1' = 2\cos(q_1)m_2 + \overline{O}(r)$ and $m_2' = -2\cos(q_1)m_1 + \overline{O}(r)$, meaning that as we approach an equilibrium point, the behaviour of these variables can be approximated by rotation along a circle with a decreasing speed (this agrees with out predictions from the previous section); Theorem \ref{st: theorem omega limit set} makes sure that they tend to some point $(m_1^{\ast}, m_2^{\ast})$ as $\tau\to+\infty$. 

Now we direct our attention towards the dynamics on the blow-up sphere. 

The linearised matrix of the system at $(q_1, q_2) =\left(\frac{\pi}{2},\arccos\left(\frac{\sqrt{5} - 1}{2}\right) \right)$ is 
\[
\begin{pmatrix}
0& 0& \frac{(1-\sqrt{5} )m_2^2}{2}& -2m_2& 0\\
0& 0& \frac{(\sqrt{5}- 1)m_1m_2}{2}& 2m_1& 0\\
0& 0& \sqrt{\frac{\sqrt{5}-1}{2}}& 0& 0\\
0& 0& -\frac{m_2(\sqrt{5} - 1)^{\frac32}}{2\sqrt{2}}& -\sqrt{\frac{\sqrt{5}-1}{2}}&0\\
0&0&- \frac{\left(\sqrt{5} - 1\right)^{\frac32}m_1}{2\sqrt{10}}&0& \frac{-2\sqrt{2}(\sqrt{5} - 3)}{(\sqrt{5} - 1)^{\frac32}}
\end{pmatrix}.
\]
As remarked above, this matrix will have two zero eigenvalues, along with three non-zero ones. Two of the non-zero eigenvalues give information about the linearised dynamics on the sphere at the given equilibrium point, while the other eigenvalue gives information about the linearised dynamics transversal to the sphere at the equilibrium point.  For this matrix, these eigenvalues are  $-\sqrt{\frac{\sqrt{5}-1}{2}}$, $\sqrt{2\sqrt{5}-2}$ and $\sqrt{\frac{\sqrt{5}-1}{2}}$  corresponding  respectively to the vectors 
 \begin{equation*}
 \label{eq: eigenvectors}
\begin{pmatrix}
\frac{2\sqrt{2}m_2}{\sqrt{\sqrt{5}-1}}\\ -\frac{2\sqrt{2}m_1}{\sqrt{\sqrt{5}-1}}\\ 0\\ 1\\0
\end{pmatrix} \ , \ 
\begin{pmatrix}
0\\ 0\\ 0\\ 0\\ 1
\end{pmatrix} \ \mathrm{and}\  \begin{pmatrix}
0\\ 0\\ \frac{2\sqrt{5}}{\sqrt{5}-1}\\ -\frac{\sqrt{5}m_2}{2}\\m_1
\end{pmatrix} .
\end{equation*}
Observe that the first  eigenvector has nonzero $m_1, m_2$ -components, representing the nontrivial dynamics of these variables that we discussed above. With one positive, one negative eigenvalue in $q_1,q_2$,  the point $\left(\frac{\pi}{2},\arccos\left(\frac{\sqrt{5} - 1}{2}\right) \right)$ is a saddle.

Analogously, $\left(\frac{\pi}{2},2\pi-\arccos\left(\frac{\sqrt{5} - 1}{2}\right) \right)$ can be demonstrated to be a saddle point as well, with the linearised matrix $$
\begin{pmatrix}
0& 0& -\frac{m_2^2(\sqrt{5} - 1)}{2}& -2m_2& 0\\0& 0& \frac{m_1m_2(\sqrt{5} - 1)}{2}& 2m_1& 0\\0& 0& -\sqrt{\frac{\sqrt{5}-1}{2}}& 0& 0\\0& 0& \frac{m_2(\sqrt{5} - 1)^{\frac32}}{2\sqrt{2}}& \sqrt{\frac{\sqrt{5}-1}{2}}& 0\\0& 0&\frac{(\sqrt{5} - 1)^{\frac32}m_1}{2\sqrt{10}}&0&\frac{2\sqrt{2}(\sqrt{5} - 3)}{(\sqrt{5} - 1)^{\frac32}}
\end{pmatrix} 
$$
and eigenvalues $\sqrt{\frac{\sqrt{5}-1}{2}}$, $-\sqrt{2\sqrt{5}-2}$, $-\sqrt{\frac{\sqrt{5}-1}{2}}$ corresponding to 
\begin{equation*}
 \label{eq: eigenvectors2}
\begin{pmatrix}
-\frac{2\sqrt{2}m_2}{\sqrt{\sqrt{5}-1}}\\ \frac{2\sqrt{2}m_1}{\sqrt{\sqrt{5}-1}}\\ 0\\ 1\\0
\end{pmatrix} \ ,\ 
\begin{pmatrix}
0\\ 0\\ 0\\ 0\\ 1
\end{pmatrix} \ \mathrm{and} \ \begin{pmatrix}
0\\ 0\\ \
\frac{2\sqrt{5}}{\sqrt{5}-1}\\ -\frac{\sqrt{5}m_2}{2}\\ m_1
\end{pmatrix}
\end{equation*}

At the point $\left(\frac{\pi}{2},\frac{\pi}{2}\right)$  the matrix is
$$\begin{pmatrix}
0& 0& 0& -2m_2& 0\\0& 0& 0& 2m_1& 0\\0& 0& 2& 0&0\\0&0& 0&-2& 0\\0& 0& 0& 0& -2
\end{pmatrix}
$$
with eigenvalues $-2,  -2$ and their eigenvectors
$$
\begin{pmatrix}
0\\0\\0\\0\\1
\end{pmatrix} \ \mathrm{and} \ 
\begin{pmatrix}
m_2\\ -m_1\\0\\0\\1\\0
\end{pmatrix},
$$
making it an attracting node.

Lastly, $\left(\frac{\pi}{2},\frac{3\pi}{2}\right)$ has
$$
\begin{pmatrix}
0& 0& 0 &-2m_2& 0\\0& 0& 0 &2m_1 &0\\0 &0& -2 &0& 0\\0& 0& 0& 2& 0\\0& 0& 0& 0& 2
\end{pmatrix}
$$
with eigenvalues $2,\ 2$ and  eigenvectors
$$
\begin{pmatrix}
0\\0\\0\\0\\1
\end{pmatrix},   \ \
\begin{pmatrix}
-m_2\\ m_1\\0\\1\\0
\end{pmatrix}
$$
and is a repelling node. 
\begin{figure}
    \centering
  \subfigure[Parametrisation in $q_1, q_2$]{\includegraphics[scale=.45]{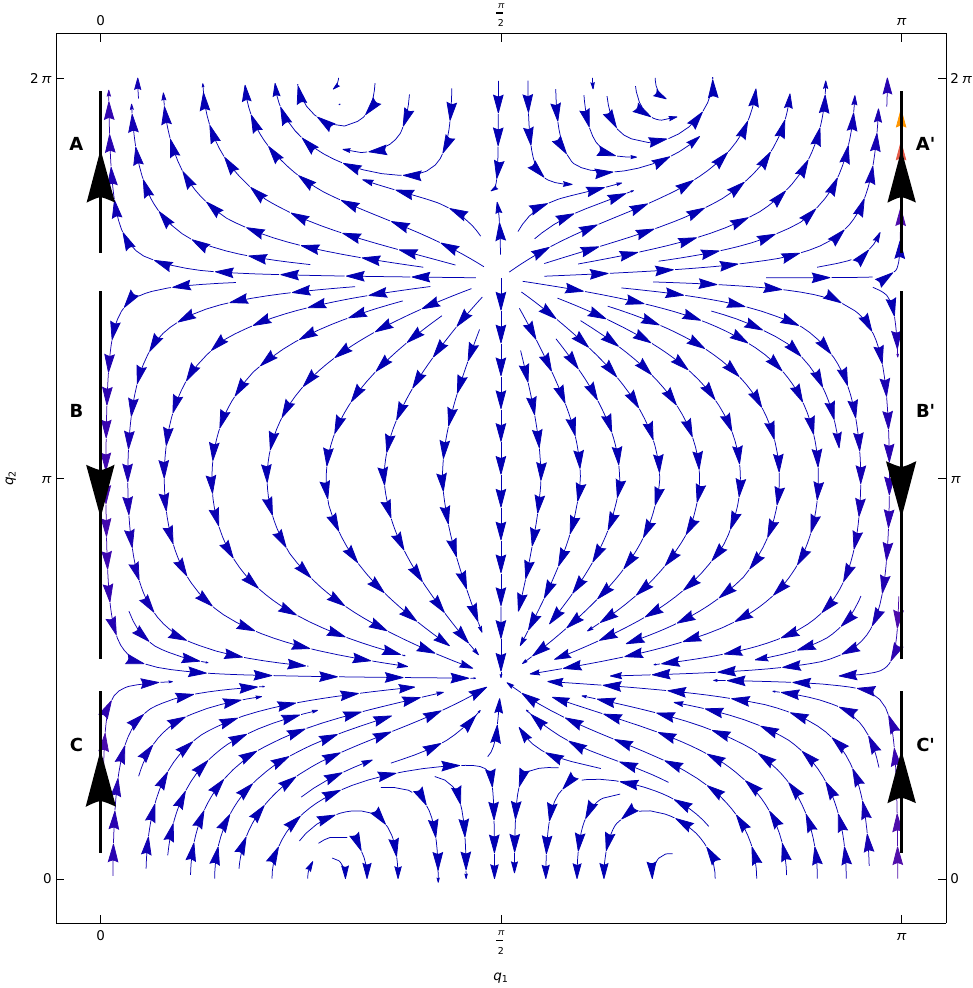}}
  \subfigure[Parametrisation in $Q_1, Q_2$]{\includegraphics[scale=.45]{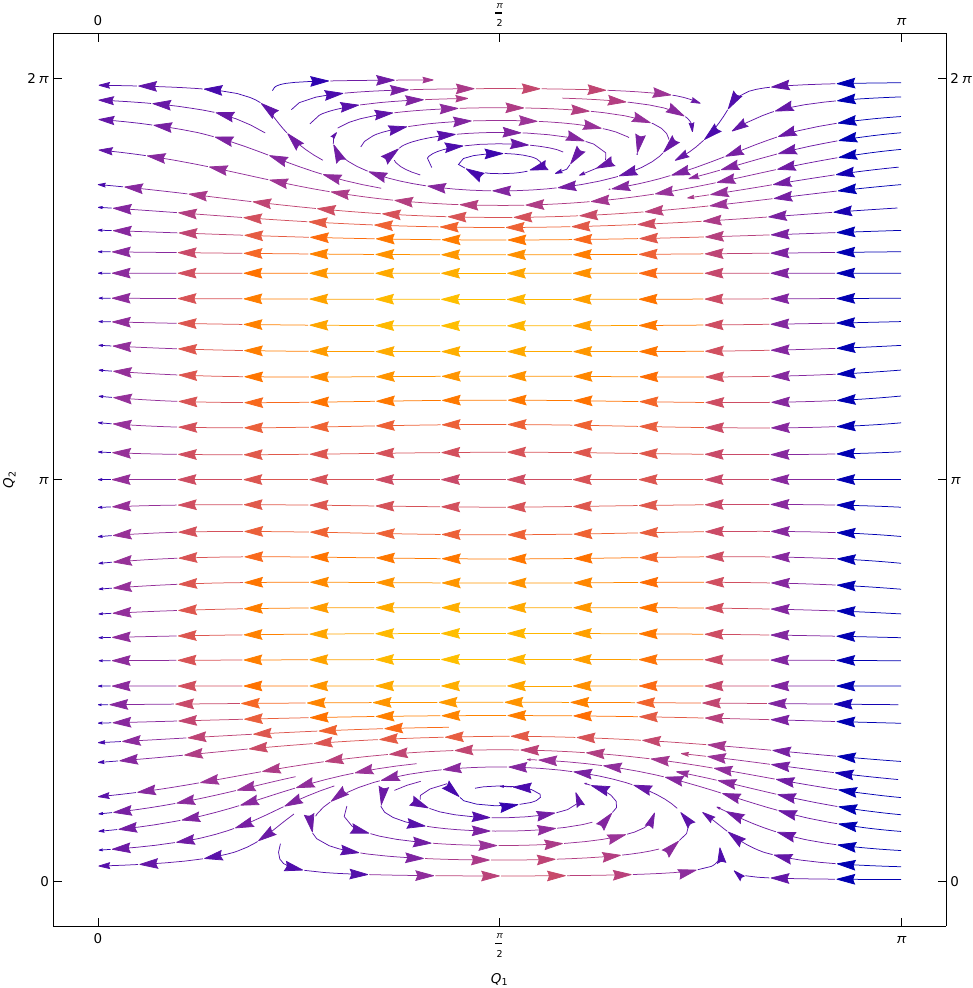}}
    \caption{Two charts on the blow-up sphere}
    \label{fig: regularisation}
\end{figure}

Therefore, so far we have discovered two saddles, a repelling and an attracting node for the dynamics on the sphere. However, in accordance with Poincare-Hopf theorem (\cite{milnor1997topology}), there must be at least two more critical points  on the sphere that are either centres or nodes;  we plot the vector field to determine: it is depicted in  Figure \ref{fig: regularisation} (a). As it can be seen, we indeed have only two additional critical points on the sphere: supposedly centres. A computation gives that they are  located at $(q_1,q_2) = \left(0, \arcsin\left(\frac{\sqrt{17}-1}{4}\right)\right)$ and $  \left(0, \pi-\arcsin\left(\frac{\sqrt{17}-1}{4}\right)\right)$. 

\begin{rmk}
We wish to stress that the last two points are equilibria in $q_1$ and $q_2$ only; they are \textbf{not} equilibria of the entire system. 
\end{rmk}

In the light of this, we need to  consider the Jacobians only with the last three components of the vector field.   The two respective linearised matrices are

$$
\begin{pmatrix}
0&0&0\\
\frac{\sqrt{34}(\sqrt{17}-1)^{\frac32}}{136}&0&\frac{\sqrt{17}-9}{\sqrt{34}\sqrt{\sqrt{17}-1}}\\\frac{\sqrt{\sqrt{17}-1}m_2}{-2\sqrt{2}}&-\frac{ \sqrt{136} (\sqrt{17}-9)}{ (\sqrt{17})^\frac{5}{2}}&0
\end{pmatrix}\ , \ \begin{pmatrix} 0&0&0\\
-\frac{\sqrt{34}(\sqrt{17}-1)^{\frac32}}{136}&0&\frac{9-\sqrt{17}}{\sqrt{34}\sqrt{\sqrt{17}-1}}\\ -\frac{\sqrt{\sqrt{17}-1}m_2}{2\sqrt{2}}&\frac{ \sqrt{136} (\sqrt{17}-9)}{ (\sqrt{17})^\frac{5}{2}}&0
\end{pmatrix}$$

with the same pair of imaginary eigenvalues 
$\sqrt{\frac{1-\sqrt{17}}{2}},\ -\sqrt{\frac{1-\sqrt{17}}{2}}$ in $q_1$, $q_2$ , definitively making them centres. As it can be easily seen, the eigenvalues in $r$-direction are 0 for both matrices. 

Of course, we analyzed the dynamics using only one chart, which is very-well known not to be enough to cover $S^2$. Moreover, since the (local) diffeomorphism between the chart we are using and a (suitable) subset of $S^2$ is singular at some points, we might have introduced spurious equilibria. In order to check this, we perform our analysis using a second chart.
More specifically, when $q_1=0,\pi$, the vector field (\ref{eq: r=0 regularisation}) is not defined. In $m_3,\ \eta$ and $\zeta$ these values correspond to the points of type $(\pm r,0,0)$; that entails that in order to finish our investigation, we need a second chart that does not have singularities at these two points on the blow-up sphere.
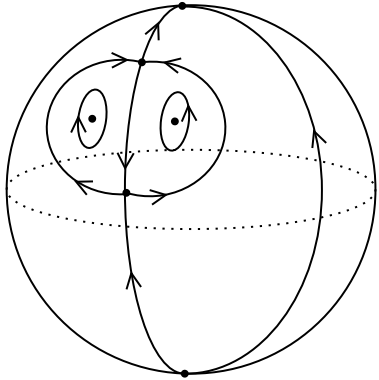
\begin{figure}
    \centering

\tikzset{every picture/.style={line width=0.75pt}} 

\begin{tikzpicture}[x=0.75pt,y=0.75pt,yscale=-1,xscale=1]

\draw   (192,99) .. controls (192,47.64) and (233.64,6) .. (285,6) .. controls (336.36,6) and (378,47.64) .. (378,99) .. controls (378,150.36) and (336.36,192) .. (285,192) .. controls (233.64,192) and (192,150.36) .. (192,99) -- cycle ;
\draw  [dash pattern={on 0.84pt off 2.51pt}] (192,99) .. controls (192,87.95) and (233.64,79) .. (285,79) .. controls (336.36,79) and (378,87.95) .. (378,99) .. controls (378,110.05) and (336.36,119) .. (285,119) .. controls (233.64,119) and (192,110.05) .. (192,99) -- cycle ;
\draw  [draw opacity=0] (281.82,192) .. controls (281.82,192) and (281.82,192) .. (281.82,192) .. controls (265.15,192) and (251.64,150.36) .. (251.64,99) .. controls (251.64,47.64) and (265.15,6) .. (281.82,6) .. controls (281.88,6) and (281.94,6) .. (282,6) -- (281.82,99) -- cycle ; \draw   (281.82,192) .. controls (281.82,192) and (281.82,192) .. (281.82,192) .. controls (265.15,192) and (251.64,150.36) .. (251.64,99) .. controls (251.64,47.64) and (265.15,6) .. (281.82,6) .. controls (281.88,6) and (281.94,6) .. (282,6) ;  
\draw   (261.87,20.69) -- (268.4,15.23) -- (268.66,23.74) ;
\draw   (233.21,78.04) .. controls (229.38,77.53) and (227.15,70.53) .. (228.23,62.41) .. controls (229.31,54.29) and (233.29,48.12) .. (237.12,48.63) .. controls (240.95,49.14) and (243.18,56.13) .. (242.11,64.26) .. controls (241.03,72.38) and (237.04,78.55) .. (233.21,78.04) -- cycle ;
\draw   (274.88,79.37) .. controls (271.05,78.86) and (268.82,71.87) .. (269.89,63.74) .. controls (270.97,55.62) and (274.96,49.45) .. (278.79,49.96) .. controls (282.62,50.47) and (284.85,57.47) .. (283.77,65.59) .. controls (282.69,73.71) and (278.71,79.88) .. (274.88,79.37) -- cycle ;
\draw  [draw opacity=0] (252.03,101.39) .. controls (251.72,101.41) and (251.42,101.43) .. (251.11,101.44) .. controls (230.4,102.27) and (213,87.75) .. (212.26,69.02) .. controls (211.51,50.3) and (227.7,34.45) .. (248.41,33.62) .. controls (252.5,33.46) and (256.47,33.89) .. (260.2,34.84) -- (249.76,67.53) -- cycle ; \draw   (252.03,101.39) .. controls (251.72,101.41) and (251.42,101.43) .. (251.11,101.44) .. controls (230.4,102.27) and (213,87.75) .. (212.26,69.02) .. controls (211.51,50.3) and (227.7,34.45) .. (248.41,33.62) .. controls (252.5,33.46) and (256.47,33.89) .. (260.2,34.84) ;  
\draw  [draw opacity=0] (260.2,34.84) .. controls (260.81,34.77) and (261.43,34.7) .. (262.05,34.65) .. controls (282.71,32.98) and (300.69,46.77) .. (302.2,65.45) .. controls (303.71,84.13) and (288.19,100.63) .. (267.53,102.31) .. controls (262.22,102.74) and (257.09,102.15) .. (252.37,100.71) -- (264.79,68.48) -- cycle ; \draw   (260.2,34.84) .. controls (260.81,34.77) and (261.43,34.7) .. (262.05,34.65) .. controls (282.71,32.98) and (300.69,46.77) .. (302.2,65.45) .. controls (303.71,84.13) and (288.19,100.63) .. (267.53,102.31) .. controls (262.22,102.74) and (257.09,102.15) .. (252.37,100.71) ;  
\draw  [fill={rgb, 255:red, 0; green, 0; blue, 0 }  ,fill opacity=1 ] (252.01,99.27) .. controls (251.22,99.47) and (250.73,100.28) .. (250.93,101.07) .. controls (251.13,101.87) and (251.93,102.35) .. (252.73,102.15) .. controls (253.52,101.95) and (254,101.15) .. (253.81,100.35) .. controls (253.61,99.56) and (252.8,99.08) .. (252.01,99.27) -- cycle ;
\draw  [fill={rgb, 255:red, 0; green, 0; blue, 0 }  ,fill opacity=1 ] (259.84,33.41) .. controls (259.05,33.61) and (258.57,34.41) .. (258.77,35.2) .. controls (258.96,36) and (259.77,36.48) .. (260.56,36.28) .. controls (261.36,36.08) and (261.84,35.28) .. (261.64,34.49) .. controls (261.44,33.69) and (260.64,33.21) .. (259.84,33.41) -- cycle ;
\draw  [fill={rgb, 255:red, 0; green, 0; blue, 0 }  ,fill opacity=1 ] (280.2,4.92) .. controls (279.41,5.12) and (278.92,5.93) .. (279.12,6.72) .. controls (279.32,7.51) and (280.13,8) .. (280.92,7.8) .. controls (281.71,7.6) and (282.2,6.8) .. (282,6) .. controls (281.8,5.21) and (280.99,4.72) .. (280.2,4.92) -- cycle ;
\draw  [fill={rgb, 255:red, 0; green, 0; blue, 0 }  ,fill opacity=1 ] (281.46,190.56) .. controls (280.67,190.76) and (280.18,191.57) .. (280.38,192.36) .. controls (280.58,193.15) and (281.38,193.64) .. (282.18,193.44) .. controls (282.97,193.24) and (283.46,192.43) .. (283.26,191.64) .. controls (283.06,190.85) and (282.25,190.36) .. (281.46,190.56) -- cycle ;
\draw  [fill={rgb, 255:red, 0; green, 0; blue, 0 }  ,fill opacity=1 ] (234.81,61.9) .. controls (234.01,62.09) and (233.53,62.9) .. (233.73,63.69) .. controls (233.93,64.49) and (234.73,64.97) .. (235.53,64.77) .. controls (236.32,64.57) and (236.8,63.77) .. (236.6,62.97) .. controls (236.41,62.18) and (235.6,61.7) .. (234.81,61.9) -- cycle ;
\draw  [fill={rgb, 255:red, 0; green, 0; blue, 0 }  ,fill opacity=1 ] (276.47,63.23) .. controls (275.68,63.43) and (275.2,64.23) .. (275.4,65.03) .. controls (275.59,65.82) and (276.4,66.3) .. (277.19,66.1) .. controls (277.99,65.91) and (278.47,65.1) .. (278.27,64.31) .. controls (278.07,63.51) and (277.27,63.03) .. (276.47,63.23) -- cycle ;
\draw   (256.1,80.74) -- (251.76,88.28) -- (247.89,80.49) ;
\draw   (252.45,149.44) -- (254.88,141.27) -- (259.79,148.24) ;
\draw   (278.46,40.17) -- (271.75,34.91) -- (280.03,32.89) ;
\draw   (244.92,30.09) -- (252.67,33.64) -- (245.08,37.53) ;
\draw   (232.41,101.71) -- (227.02,95.11) -- (235.55,94.96) ;
\draw   (264.12,99.33) -- (272.28,101.77) -- (265.32,106.68) ;
\draw   (280.33,64.76) -- (283.62,56.89) -- (287.76,64.34) ;
\draw   (224.5,70.28) -- (228.11,62.56) -- (231.94,70.17) ;
\draw   (346.16,78.1) -- (347.02,69.62) -- (353.15,75.54) ;
\draw  [draw opacity=0] (282.06,6.72) .. controls (282.06,6.72) and (282.06,6.72) .. (282.06,6.72) .. controls (320.14,6.72) and (351,48.2) .. (351,99.36) .. controls (351,150.52) and (320.14,192) .. (282.06,192) -- (282.06,99.36) -- cycle ; \draw   (282.06,6.72) .. controls (282.06,6.72) and (282.06,6.72) .. (282.06,6.72) .. controls (320.14,6.72) and (351,48.2) .. (351,99.36) .. controls (351,150.52) and (320.14,192) .. (282.06,192) ;

\end{tikzpicture}

    \caption{Schematic representation of the  vector field on the blow-up sphere}
    \label{fig: sphere vf}
\end{figure}

The alternative parametrisation we choose is $m_3 = R\sin\left(Q_1\right)\sin\left(Q_2\right)$, $\eta = R^2\sin\left(Q_1\right)\cos\left(Q_2\right) $, $\zeta = R\cos\left(Q_1\right)$. Now, the transformation is singular at the points $(0,0,R)$; but importantly, $(\pm r,0,0)$ correspond to $Q_1= \frac{\pi}{2}$, $Q_2=\frac{\pi}{2}$ or $Q_2=\frac{3\pi}{2}$, points strictly inside the chart.  

Identically to the previous case, we rewrite the vector field in the new variables (the original equation in new variables is (\ref{eq: the other long}) in Appendix \ref{sec: appendix}) and after dividing by $R$ and subsequently substituting $R=0$, we obtain
\begin{equation*}
    \begin{split}
&2m_2\sin(q_2)\sin(q_1)\frac{\partial}{\partial m_1}  -2\sin(q_1)\sin(q_2)m_1\frac{\partial}{\partial m_2}  +\\&+ \frac{\sin(q_1)\left(2\sin^2(q_1)\cos(q_2)^4 +\cos^3(q_2)\sin(q_1) + \cos(q_2)\sin(q_1) - 2\right)}{1 + \sin(q_1)^2\cos^2(q_2)}\frac{\partial}{\partial Q_1} +\\&+ \frac{\cos^2(q_2)\cos(q_1)\sin(q_2)(2\cos(q_2)\cos^2(q_1) - 2\cos(q_2) - \sin(q_1))}{1 + \sin(q_1)^2\cos^2(q_2)}\frac{\partial}{\partial Q_2}
    \end{split}
\end{equation*}
whence it is clear that $Q_1=\frac{\pi}{2}$, $Q_2 =\frac{\pi}{2},\frac{3\pi}{2}$ are not equilibrium points; therefore,  we have found all the possible equlibria on the sphere.

\begin{rmk}
It can be easily checked that the second vector field has exactly the same number of zeros of the same type on the sphere: two centres (that are equilibria on the sphere only), two saddles, and  two nodes: stable and unstable.

Moreover, for all these points their respective $m_3, \ \eta,\ \zeta$- coordinates coincide. As an example, we take a saddle  at $q_1 = \frac{\pi}{2},\  q_2 = \arccos\left(\frac{\sqrt{5}- 1}{2}\right)$. With varying $r$, this point corresponds to the points of the type  $\left(0, r^2\frac{\sqrt{5} - 1}{2}, r\sqrt{\frac{\sqrt{5}-1}{2}}\right)$ in $m_3, \ \eta,\ \zeta$; so does the saddle point at $Q_2 = 0,\  Q_1 = \arcsin\left(\frac{\sqrt{5}- 1}{2}\right)$.

\end{rmk}
The vector field on the second chart is given in Figure \ref{fig: regularisation}(b).

The last question that we have to answer is that of reconstructing the vector field on the sphere from our charts. 

It is a significantly easier task for the second parametrisation, and so we do it first. Since $Q_1=0,\pi$ correspond to one point each, we 'shrink' the two intervals to a point, and then identify the two lines $Q_2=0, 2\pi$. In the end we get a sphere with nodes at  North and South poles. 

Gluing the sphere with the first parametrisation is a bit more fiddly: we have to stretch the line $q_1=\frac{\pi}{2}$, so that the nodes lie at the poles. Then we twist the lines marked by $A, \ A',\  B,\ B',\ C, \ C'$ so that they are vertical and glue them together, as well as the edges $Q_2=0$ and $Q_2 = 2\pi$. Now all of these lines are trajectories connecting the North and the South pole, corresponding to the horizontal lines in the picture on the right.

The schematic form of the vector field on the sphere is depicted in Figure \ref{fig: sphere vf}.

\section{Conclusion}
In this paper we have considered collision trajectories for the two-body problem on a sphere and investigated the near-collision trajectories via blow-up. 

In order to simplify our computations, we have chosen the case of two identical bodies; we believe that our proofs can be adapted to the general case -- comparison of collision trajectories with this symmetric case could be an interesting topic. 

Additionally, similar investigation can be carried out for the case of negative curvature. In the light of this, another possible topic of investigation would be the families of collision orbits on curved surfaces and the way they change with the curvature $\kappa$, as it comes from, say, negative to zero to positive. 
\appendix
\section{Appendix}

The vector field, as rewritten in the new variables $r,\ q_1,\ q_2$ has the form
\label{sec: appendix}

\begin{equation}
\label{eq: initial vf regularised}
 \begin{split}
     &\biggl(\bigl(r^3\cos^2(q_1) + m_2^2 r\bigr)\cos(q_2)  \sin(q_1) +2\cos(q_1) m_2\biggr) r\frac{\partial}{\partial m_1}+ \\&+ r \biggl(\sin(q_2)\cos^3(q_1)\cos(q_2) r^3 - \bigl(r^3\cos(q_2)\sin(q_2) + 2 m_1\bigr)\cos(q_1) +  \sin(q_1)\cos(q_2) m_1 m_2 r\biggr)\frac{\partial}{\partial m_2}+\\&+ f_3\frac{\partial}{\partial r}+f_4\frac{\partial}{\partial q_1} + f_5\frac{\partial}{\partial q_2},
    \end{split}
\end{equation}

where

\begin{footnotesize}
\begin{equation*}
\begin{split}
   f_3 =  \frac{1}{(-1 + (\cos^2(q_1) - 1)\cos^2(q_2)} \biggl(&r^2  \sin(q_1) \bigl( \sin^3(q_1)r^4 \cos^3(q_2) \left((m_2 r \cos(q_1) +  1)\sin(q_2) + r m_1\sin(q_1)\right) +\\&+ 4r^8\sin^4(q_1) \sin(q_2)\cos^2(q_2) + (m_1 r +  \sin(q_1)\sin(q_2))\cos(q_2) - 2\sin(q_2)\bigr)\biggr),
   \end{split}
\end{equation*}
\end{footnotesize}
\begin{footnotesize}
\begin{equation*}
\begin{split}
   f_4=- \frac{r}{1 + \sin^2(q_1)\cos^2(q_2)} \biggl(&\bigl( m_2 r^4  \sin(q_1)\sin(q_2)\cos^2(q_1) -m_1 r^4\cos^3(q_1) + (m_1(r^4-1) + r^3  \sin(q_1)\sin(q_2) )\cos(q_1)+\\& + m_2\sin(q_2)  \sin(q_1)\bigr)  r \sin^2(q_1)\cos^3(q_2) - 2 r^4\cos(q_1)\sin^2(q_1)\sin(q_2)\cos^2(q_2) + \\&+ \sin(q_1)\sin(q_2) (m_2 r + \cos(q_1))\cos(q_2) - 2\sin(q_2)\cos(q_1)\biggr) 
    \end{split}
   \end{equation*}
   \end{footnotesize}
   and
   
   \begin{footnotesize}
   
    \begin{equation*}
    \begin{split}
    f_5 = \frac{\cos(q_2) r}{1 + \sin^2(q_1)\cos^2(q_2)} \biggl(&2 - 2 r^4 \sin^6(q_1)\cos(q_2)^4 + \bigl(m_1 r^4\cos^4(q_1)\sin(q_2) - m_2 r^4  \sin(q_1)\cos^3(q_1) + \\&+(-2 m_1 r^4\sin(q_2) - r^3  \sin(q_1))\cos^2(q_1) + (2 r^4 + 1)  \sin(q_1) m_2\cos(q_1) +\\&+ r^3 (m_1 r\sin(q_2) + 2  \sin(q_1))\bigr) (\cos(q_1) + 1) r (\cos(q_1) - 1)\cos^3(q_2) +\\&+ (-2\cos^4(q_1) + (-2 r^4 + 4)\cos^2(q_1) + 2 r^4 - 2)\cos^2(q_2) + \bigl((m_1 r\sin(q_2) +  \sin(q_1))\cos^2(q_1) -\\&- m_2 r  \sin(q_1)\cos(q_1) - m_1 r\sin(q_2) - 2  \sin(q_1)\bigr)\cos(q_2)\biggr)
    \end{split}
    \end{equation*}
\end{footnotesize}
The regularised vector field, as rewritten in the variables $R,\ Q_1,\ Q_2$  has the form 
\begin{equation}
    \label{eq: the other long}
    \begin{split}
    &\sin(Q_1)R\biggl(R^3\sin^2(Q_1)\cos^3(Q_2) - R(R^2\cos^2(Q_1) + m_2^2 - R^2)\cos(Q_2) + 2m_2\sin(Q_2)\biggr)\frac{\partial}{\partial m_1}-\\&- R\biggl(2\sin(Q_2)m_1+R(R^2\sin(Q_1)\sin(Q_2)\cos(Q_1) - m_1m_2)\cos(Q_2)  \biggr)\sin(q_1)\frac{\partial}{\partial m_2}+\\&+ g_3\frac{\partial}{\partial R}+ g_4\frac{\partial}{\partial Q_1}+ g_5\frac{\partial}{\partial Q_2},
    \end{split}
\end{equation}
where
\begin{footnotesize}
\begin{equation*}
\begin{split}
  g_3= \frac{- R^2}{1 + \sin^2(Q_1)\cos^2(Q_2)}\biggl(&m_1R^5\sin(Q_1)\sin^4(Q_1)\cos(Q_2)^5 + R^4\cos(Q_1)\sin^2(Q_1)\bigl(-m_2R\cos^2(Q_1)\sin(Q_2)+\\&+ + m_1r\cos(Q_1)\sin(Q_1) + m_2R\sin(Q_2) + \sin(Q_1)\bigr)\cos^3(Q_2) + (2R^4\cos^3(Q_1)-\\& - 2R^4\cos(Q_1))\cos^2(Q_2) + \sin(Q_1)(m_1R + \cos(Q_1))\cos(Q_2) - 2\cos(Q_1)\biggr),
  \end{split}
\end{equation*}
\end{footnotesize}
\begin{footnotesize}
\begin{equation*}
\begin{split}
  g_4= \frac{-R \sin( Q_1)}{1 + \sin^2(Q_1) \cos^2( Q_2} \biggl(&2 - 2  R^4\sin^4(Q_1)\cos( Q_2)^6 - R^4\sin^2(Q_1)\bigl(m_2  R \sin^2( Q_1) \sin( Q_2) + \sin( Q_1)\bigr) \cos( Q_2)^5 + \\&+(2 \cos^2( Q_1) - 2) \cos( Q_2)^4 + ( R^4 \cos^2( Q_1) -  R^4 - 1) (-m_2  R \cos^2( Q_1) \sin( Q_2) +\\&+ m_1  R \cos( Q_1) \sin( Q_1) + m_2  R \sin( Q_2) + \sin( Q_1)) \cos^3( Q_2) +  2  R^4\sin^2(Q_1) \cos^2( Q_2) + \\&+(-m_2  R \sin( Q_2) - \sin( Q_1)) \cos( Q_2)\biggr)  
  \end{split}
\end{equation*}
\end{footnotesize}
and
\begin{footnotesize}
\begin{equation*}
\begin{split}
  g_5=  \frac{R \cos^2( Q_2)}{1 + \sin^2( Q_1) \cos^2( Q_2)}\biggl(&m_2  R^5 \cos( Q_1) \sin^4(Q_1) \cos( Q_2)^4 - 2  R^4 \cos( Q_1) \sin( Q_2) \sin^4(Q_1) \cos^3( Q_2) +  \\&-R\sin^2(Q_1)\cos^2( Q_2) \bigl(-m_2  R^4 \cos^3( Q_1) + m_1  R^4 \cos^2( Q_1) \sin( Q_1) \sin( Q_2) +\\&+ (m_2  R^4 +  R^3 \sin( Q_1) \sin( Q_2) - m_2) \cos( Q_1) - m_1  R^4 \sin( Q_1) \sin( Q_2)\bigr)   +\\&+ (2 \cos^3( Q_1) \sin( Q_2) - 2 \cos( Q_1) \sin( Q_2)) \cos( Q_2) + (m_2  R - \sin( Q_1) \sin( Q_2)) \cos( Q_1) -\\&- \sin( Q_1) \sin( Q_2) m_1  R\biggr)  
  \end{split}
\end{equation*}
\end{footnotesize}


\begin{thebibliography}{10}

\bibitem{andrade2017dynamics}
J.~Andrade, N.~D{\'a}vila, E.~P{\'e}rez-Chavela, and C.~Vidal.
\newblock Dynamics and regularization of the Kepler problem on surfaces of
  constant curvature.
\newblock {\em Canadian Journal of Mathematics}, 69(5):961--991, 2017.

\bibitem{arathoon2019singular}
P.~Arathoon.
\newblock Singular reduction of the 2-body problem on the 3-sphere and the
  4-dimensional spinning top.
\newblock {\em Regular and Chaotic Dynamics}, 24(4):370--391, 2019.

\bibitem{arsie2010locating}
A.~Arsie and C.~Ebenbauer.
\newblock Locating omega-limit sets using height functions.
\newblock {\em Journal of Differential Equations}, 248(10):2458--2469, 2010.

\bibitem{balabanova2021two}
N.~A. Balabanova and J.~A. Montaldi.
\newblock Two-body problem on a sphere in the presence of a uniform magnetic
  field.
\newblock {\em Regular and Chaotic Dynamics}, 26(4):370--391, 2021.


\bibitem{borisov2018reduction}
A.~Borisov, L.~Garc{\'\i}a-Naranjo, I.~Mamaev, and J.~Montaldi.
\newblock Reduction and relative equilibria for the two-body problem on spaces
  of constant curvature.
\newblock {\em Celestial Mechanics and Dynamical Astronomy}, 130(6):1--36,
  2018.

\bibitem{borisov2005superintegrable}
A.~V. Borisov and I.~S. Mamaev.
\newblock Superintegrable systems on sphere.
\newblock {\em arXiv preprint nlin/0504018}, 2005.

\bibitem{borisov2005two}
A.~V. Borisov, I.~S. Mamaev, and A.~A. Kilin.
\newblock Two-body problem on a sphere. Reduction, stochasticity, periodic
  orbits.
\newblock {\em arXiv preprint nlin/0502027}, 2005.

\bibitem{diacu2017classical}
F.~Diacu.
\newblock The classical N-body problem in the context of curved space.
\newblock {\em Canadian Journal of Mathematics}, 69(4):790--806, 2017.

\bibitem{diacu2008n}
F.~Diacu, E.~P{\'e}rez-Chavela, and M.~Santoprete.
\newblock The N-body problem in spaces of constant curvature.
\newblock {\em arXiv preprint arXiv:0807.1747}, 2008.

\bibitem{diacu2012n}
F.~Diacu, E.~P{\'e}rez-Chavela, and M.~Santoprete.
\newblock The n-body problem in spaces of constant curvature. part i: Relative
  equilibria.
\newblock {\em Journal of Nonlinear Science}, 22(2):247--266, 2012.

\bibitem{diacu2012n2}
F.~Diacu, E.~P{\'e}rez-Chavela, and M.~Santoprete.
\newblock The n-body problem in spaces of constant curvature. part II:
  Singularities.
\newblock {\em Journal of Nonlinear Science}, 22(2):267--275, 2012.

\bibitem{dumortier2006qualitative}
F.~Dumortier, J.~Llibre, and J.~C. Art{\'e}s.
\newblock {\em Qualitative theory of planar differential systems}.
\newblock Springer, 2006.

\bibitem{frauenfelder2018restricted}
U.~Frauenfelder and O.~Van~Koert.
\newblock {\em The restricted three-body problem and holomorphic curves}.
\newblock Springer, 2018.

\bibitem{garcia2021attracting}
L.~C. Garc{\'\i}a-Naranjo and J.~Montaldi.
\newblock Attracting and repelling 2-body problems on a family of surfaces of
  constant curvature.
\newblock {\em Journal of Dynamics and Differential Equations},
  33(4):1579--1603, 2021.

\bibitem{jardon2016analysis}
H.~Jard{\'o}n-Kojakhmetov, H.~W. Broer, and R.~Roussarie.
\newblock Analysis of a slow--fast system near a cusp singularity.
\newblock {\em Journal of Differential Equations}, 260(4):3785--3843, 2016.

\bibitem{kozlov1992kepler}
V.~V. Kozlov and A.~O. Harin.
\newblock Kepler's problem in constant curvature spaces.
\newblock {\em Celestial Mechanics and Dynamical Astronomy}, 54(4):393--399,
  1992.

\bibitem{levi1920regularisation}
T.~Levi-Civita.
\newblock Sur la r{\'e}gularisation du probl{\`e}me des trois corps.
\newblock {\em Acta Mathematica}, 42(1):99--144, 1920.

\bibitem{marsden2013introduction}
J.~E. Marsden and T.~S. Ratiu.
\newblock {\em Introduction to mechanics and symmetry: a basic exposition of
  classical mechanical systems}, volume~17.
\newblock Springer Science \& Business Media, 2013.

\bibitem{milnor1983geometry}
J.~Milnor.
\newblock On the geometry of the Kepler problem.
\newblock {\em The American Mathematical Monthly}, 90(6):353--365, 1983.

\bibitem{milnor1997topology}
J.~Milnor and D.~W. Weaver.
\newblock {\em Topology from the differentiable viewpoint}, volume~21.
\newblock Princeton university press, 1997.

\bibitem{shchepetilov1998reduction}
A.~Shchepetilov.
\newblock Reduction of the two-body problem with central interaction on simply
  connected spaces of constant sectional curvature.
\newblock {\em Journal of Physics A: Mathematical and General}, 31(29):6279,
  1998.

\end{thebibliography}
\end{document}